\documentclass[pdflatex,sn-mathphys-num]{sn-jnl}

\usepackage{graphicx}
\usepackage{multirow}
\usepackage{amsmath,amssymb,amsfonts}
\usepackage{amsthm}
\usepackage{mathrsfs}
\usepackage[title]{appendix}
\usepackage{xcolor}
\usepackage{textcomp}
\usepackage{manyfoot}
\usepackage{booktabs}
\usepackage{algorithm}
\usepackage{algorithmicx}
\usepackage{algpseudocode}
\usepackage{listings}

\usepackage{empheq} % Ez a csomag szükséges
\usepackage{hyperref}

\theoremstyle{thmstyleone}
\newtheorem{theorem}{Theorem}[section]
\newtheorem{lemma}{Lemma}[section]

\newtheorem{corollary}{Corollary}[section]

\theoremstyle{thmstyletwo}

\theoremstyle{thmstylethree}

\newtheorem{remark}{Remark}[section]
\newtheorem{openproblem}{Open Problem}[section]
\newtheorem{notation}{Notation}[section]

\begin{document}

\title[Qualitative Analysis of a Longitudinal T2DM Model]{Qualitative Properties and Stability Analysis of Autonomous and Non-Autonomous Longitudinal T2DM Models}

\author[1]{\fnm{V. E. S.} \sur{Szab\'o}}\email{bmesszabo@gmail.com}

\affil[1]{\orgdiv{Department of Analysis and Operations Research, Institute of Mathematics}, 
\orgname{Budapest University of Technology and Economics}, 
\orgaddress{\street{M\H uegyetem rkp. 3.}, \city{Budapest}, \postcode{H-1111}, \country{Hungary}}}

\abstract{In this paper, we investigate the qualitative properties, long-term dynamic behavior, 
and stability of a longitudinal mathematical model describing the progression of Type 2 Diabetes 
Mellitus (T2DM). 
We significantly extend and refine the mathematical analysis of the model recently proposed by 
De Gaetano et al. (Journal of Theoretical Biology, 2024).
Specifically, we analyze both the autonomous system with a fixed age parameter and the non-autonomous system 
incorporating explicit time-dependent aging dynamics under generalized conditions, providing rigorous proofs for 
global existence, uniqueness, positivity, asymptotic bounds, and stability properties.}
\keywords{Type 2 Diabetes Mellitus, Non-autonomous dynamical systems, Qualitative properties, 
Stability analysis, Longitudinal modeling}

\maketitle

\section{Introduction}\label{sec:Intro}

Type 2 Diabetes Mellitus (T2DM) is a chronic metabolic disorder characterized by insulin 
resistance, impaired insulin secretion, and progressive pancreatic $\beta$-cell dysfunction. 
Mathematical modeling of glucose-insulin homeostasis plays a crucial role in understanding the 
long-term pathogenesis of diabetes. While classical minimal models focus on short-term dynamics 
(such as intravenous glucose tolerance tests operating on minute or hour scales), longitudinal 
models are required to capture the slow progression of metabolic degradation over years and 
decades.

Recently, De Gaetano et al. \cite{degaetano2024} proposed a comprehensive longitudinal ordinary 
differential equation (ODE) model describing the progression of T2DM. 

In order to simplify the notation of the biological constants while retaining the original model 
structure, we denote the positive system rate parameters by $k_1, k_2, \dots, k_{16} > 0$. 

In De Gaetano et al. \cite{degaetano2024} the following longitudinal model was proposed
\begin{empheq}[left=\empheqlbrace]{align}
    G'(t) &=k_1e^{- L(t)I(t)G(t)}-S(t)I(t)G(t)-R(G(t)), \label{eq:1} \\
    B'(t) &=k_3e^{-\lambda (A(t)-A_0)}-k_4G(t)B(t),  \label{eq:2} \\
    I'(t) &=-k_5I(t)+k_6 \frac{B(t)}{B_0} \frac{(G(t))^{\gamma}}{c_1+(G(t))^{\gamma}},  
                                                                       \label{eq:3} \\
    S'(t) &=k_7-k_8S(t)+k_9Y(t)-k_{10}F(t)S(t)-k_{11}(A(t)-A_0)S(t), \label{eq:4} \\
    L'(t) &=k_{12}-k_{13}L(t)+k_{14}Y(t)-k_{15}F(t)L(t)-k_{16}(A(t)-A_0)L(t), \label{eq:5} \\
    A'(t) &=1 \label{eq:6}
\end{empheq}
with the initial conditions
\begin{equation}
    G(t_0)=G_0, B(t_0)=B_0, I(t_0)=I_0, S(t_0)=S_0, L(t_0)=L_0, A(t_0)=A_0 \label{eq:init}
\end{equation}
where
\begin{equation}\label{eq:renal_R}
R(G) = \frac{s}{2}\left(\sqrt{4\rho_{GR}G(t)+(G(t)-G_R-\rho_{GR})^2}+(G(t)-G_R-\rho_{GR})   
\right).
\end{equation}
The initial values $G_0, B_0, I_0, S_0, L_0$ are positive numbers, $Y(t)=Y_0\geq 0$, 
$F(t)=F_0\geq 0$ for all $t\geq 0$, and $\lambda, \gamma, c_1, s$ are positive numbers. 
The state variables of the 
model represent the key physiological components of the glycemic control system: $G(t)$ denotes 
the representative (typical or fasting) plasma glucose concentration, $B(t)$ is the pancreatic 
$\beta$-cell mass, $I(t)$ is the plasma insulin concentration, $S(t)$ represents peripheral 
insulin sensitivity, and $L(t)$ corresponds to hepatic insulin sensitivity. The input functions are 
the non-negative functions $Y(t)$, the effect 
of physical activity and $F(t)$, the effect of food intake, both expressed
as kcal (respectively of average daily above-normal physical activity
and average daily excess food intake).  For full 
physiological details and block diagrams, see Table 1 and Fig. 1 in \cite{degaetano2024}. 
Without loss of generality we may assume that $t_0=0$. In the same paper the local stability for 
fixed age ($A(t)=A_0$) was investigated with the assumptions $F(t)=F_0$, $Y(t)=Y_0$. 

In this paper, we investigate the qualitative properties 
of the following 
more general 
 systems incorporating biological age evolution $A(t) = t + A_0$:
\begin{empheq}[left=\empheqlbrace]{align}
    G'(t) &=k_1e^{-\alpha_0 L(t)I(t)G(t)}-k_2S(t)I(t)G(t)-u(G(t)), \label{eq:non_auto_1} \\
    B'(t) &=k_3e^{-\lambda (A(t)-A_0)}-k_4G(t)B(t),  \label{eq:non_auto_2} \\
    I'(t) &=-k_5I(t)+k_6 \frac{B(t)}{B_0} \frac{|G(t)|^{\gamma}}{c_1+|G(t)|^{\gamma}},  
                                                                       \label{eq:non_auto_3} \\
    S'(t) &=k_7-k_8S(t)+k_9Y(t)-k_{10}F(t)S(t)-k_{11}(A(t)-A_0)S(t), \label{eq:non_auto_4} \\
    L'(t) &=k_{12}-k_{13}L(t)+k_{14}Y(t)-k_{15}F(t)L(t)-k_{16}(A(t)-A_0) L(t), \label{eq:non_auto_5} \\
    A'(t) &=1 \label{eq:non_auto_6}
\end{empheq}
alongside its counterpart, obtained by fixing the biological age at $A(t)= A_0$:
\begin{empheq}[left=\empheqlbrace]{align}
    G'(t) &=k_1e^{-\alpha_0 L(t)I(t)G(t)}-k_2S(t)I(t)G(t)-u(G(t)), \label{eq:auto_1} \\
    B'(t) &=k_3-k_4G(t)B(t),  \label{eq:auto_2} \\
    I'(t) &=-k_5I(t)+k_6 \frac{B(t)}{B_0} \frac{|G(t)|^{\gamma}}{c_1+|G(t)|^{\gamma}},  
                                                                       \label{eq:auto_3} \\
    S'(t) &=k_7-k_8S(t)+k_9Y(t)-k_{10}F(t)S(t), \label{eq:auto_4} \\
    L'(t) &=k_{12}-k_{13}L(t)+k_{14}Y(t)-k_{15}F(t)L(t), \label{eq:auto_5} 
\end{empheq}
subject to positive initial conditions
\begin{equation}\label{eq:initial_cond}
G(0)=G_0,\,B(0)=B_0,\, I(0)=I_0,\,S(0)=S_0,\,L(0)=L_0,
\end{equation}
where the non-smooth piecewise linear function $u(G)$ represents renal glucose excretion:
\begin{equation}\label{eq:renal_u}
u(G)=
\begin{cases}
0, & \text{if } G<G_u,\\
s(G-G_u), & \text{if } G\geq G_u;
\end{cases}
\end{equation}
and the system coefficients satisfy
\begin{equation}\label{eq:sys_coeff}
\alpha_0\geq 0,\, \gamma\geq 1,\, \lambda, c_1, s, G_u, G_0,\ldots,L_0,  k_1,\ldots, k_{16}>0,
\end{equation}
under general conditions 
\begin{equation}\label{eq:F_Y}
0\leq c_2\leq Y(t)\leq c_3,\quad 0\leq c_4 \leq F(t)\leq c_5 \quad (t\geq 0),
\end{equation}
and stability of the system $A(t)=A_0$ 
in the autonomous setting where $Y(t) \equiv Y_0$ and $F(t) \equiv F_0$.

\begin{remark}
It is worth noting an important technical distinction regarding the formulation of renal glucose 
excretion. In the model presented in \cite{degaetano2024}, instead of using the semi-classical 
piecewise linear function $u(G)$ defined in \eqref{eq:renal_u}, the authors utilized a smooth, 
twice-differentiable approximation $R(G)$ given by (see (8) in \cite{degaetano2024}):
As discussed in \cite{degaetano2024}, the substitution of $u(G)$ by $R(G)$ was motivated by 
analytical convenience, specifically to satisfy the differentiability assumptions required by the 
specific mathematical tools employed in their local analysis. The relationship and numerical 
approximation between $u(G)$ and $R(G)$ are illustrated in Fig. 2 of \cite{degaetano2024}. 

However, from both physical and physiological standpoints, a threshold-based piecewise linear 
function such as $u(G)$ provides a highly realistic mechanistical model of the sharp activation 
of renal glucose clearance once the physiological renal threshold $G_u$ is exceeded. A key 
feature of our approach is that our analytical proofs rely on a different theoretical framework 
that does not require continuous differentiability of the excretion term. Consequently, we are 
able to establish global qualitative properties and stability directly for the natural, non-
smooth piecewise linear formulation $u(G)$.
\end{remark}
\begin{remark}
    The formula \eqref{eq:3} is incorrect, because we do not know that $G(t)\geq 0$ so the 
    power $\gamma$ of it is problematic. So we should modify the formula for the following form 
\begin{equation*}
    I'(t) =-k_5I(t)+k_6 \frac{B(t)}{B_0} \frac{|G(t)|^{\gamma}}{c_1+|G(t)|^{\gamma}}.
\end{equation*}
\end{remark}
\begin{remark}\label{rem:frac_gamma}
The authors of \cite{degaetano2024} use Volpert's theorem, and they need the continuous 
differentiability of the function 
\begin{equation}
    h(x)=\frac{|x|^{\gamma}}{c+|x|^{\gamma}}\quad (c>0)
\end{equation}
which is satisfied only for $\gamma >1$.

If $0<\gamma<1$ then $h$ is not differentiable at $0$. If $\gamma=1$ then 
\begin{equation*}
    h'(0+)=\frac{1}{c_1}\neq -\frac{1}{c_1}=h'(0-),
\end{equation*}
hence $h$ is not differentiable at $0$. If $\gamma>1$ then 
\begin{equation*}
    h'(0+)=h'(0-)=0 \implies h'(0)=0,
\end{equation*}
and
\begin{equation*}
    \lim_{x\to 0+} h'(x)=\lim_{x\to 0-} h'(x)=0=h'(0)
\end{equation*}
If $x\neq 0$ then $h'(x)$ is continuous for $\gamma >0$. 

Instead of Volpert's theorem we use contraction principle and we will be able to extend 
the results for $\gamma\geq 1$ (see \autoref{sec:Exist_and_unique} and \autoref{sec:Positivity}).   
\end{remark}
This motivates the following 
\begin{openproblem}
    If $0 < \gamma < 1$, does the system has a unique solution?
\end{openproblem}

In the initial biological formulation presented in \cite{degaetano2024}, the analysis was 
primarily oriented towards parameter identification, physiological simulation, and local 
stability. A comprehensive mathematical treatment of the global dynamic behavior for both systems 
\eqref{eq:non_auto_1}-\eqref{eq:non_auto_6}, \eqref{eq:initial_cond}, 
and \eqref{eq:auto_1}-\eqref{eq:auto_5}, \eqref{eq:initial_cond}  
remains an important open question.

In this paper, we extend and complement the qualitative analysis of the model in two main 
directions:

First, for the  system \eqref{eq:non_auto_1}-\eqref{eq:non_auto_6} 
where biological age 
evolves as $A(t) = t + A_0$, we establish the following main outcomes: 
\begin{itemize}
    \item \textbf{Global existence, uniqueness, and positivity:} We prove that for any $\gamma \ge 1$ and all 
    positive initial values, the system possesses a unique $C^1$ solution defined for all $t \ge 0$, and all 
    state variables $G(t), B(t), I(t), S(t), L(t)$ remain strictly positive for all $t \ge 0$.
    \item \textbf{Asymptotic decay of sensitivity parameters:} Due to the explicit time-dependent biological 
    age term, the peripheral and hepatic insulin sensitivities decay to zero as $t \to \infty$, 
    with asymptotic behavior $S(t) = O(1/t)$ and $L(t) = O(1/t)$. 
    \item \textbf{Convergence of plasma glucose:} Despite the progressive decay of insulin sensitivity, the 
    plasma glucose concentration $G(t)$ remains bounded for all $t \ge 0$ and converges globally to the constant 
    physiological threshold $G_u + \frac{k_1}{s}$ as $t \to \infty$. 
\end{itemize}

Second, regarding the autonomous system \eqref{eq:auto_1}-\eqref{eq:auto_5}  
with fixed age parameter $A_0$, we 
prove the following results: 
\begin{itemize}
    \item \textbf{Global existence, uniqueness, and positivity:} We establish that for any $\gamma \ge 1$ and 
    all positive initial conditions, the system admits a unique $C^1$ solution defined for all $t \ge 0$, with 
    all state variables $G(t), B(t), I(t), S(t), L(t)$ remaining strictly positive.  
    \item \textbf{Boundedness of insulin sensitivities:} In contrast to the aging model, the peripheral and 
    hepatic insulin sensitivities $S(t)$ and $L(t)$ remain strictly bounded away from zero and infinity for all 
    $t \ge 0$.  
    \item \textbf{Global upper bounds under positive lower bound on glucose:} Under the physiological assumption 
    that glucose concentration is bounded below by a positive constant $c_0 > 0$ 
    (see Open Problem \ref{open:G bounded below}), 
    we prove that the pancreatic 
    $\beta$-cell mass $B(t)$ and plasma insulin concentration $I(t)$ remain uniformly bounded for all $t \ge 0$. 
    \item \textbf{Existence and uniqueness of positive equilibrium:} For constant input functions $Y(t) = Y_0$ 
    and $F(t) = F_0$, we prove that the system possesses a unique positive equilibrium point $(G_*, B_*, I_*, 
    S_*, L_*)$.  
    \item \textbf{Local asymptotic stability:} 
    Using a rigorous spectral analysis and the Routh–Hurwitz 
    stability criterion, we establish that this unique positive equilibrium point 
    is locally asymptotically 
    stable for all positive system parameters.  
\end{itemize}

The remainder of this paper is organized as follows. In \autoref{sec:Exist_and_unique}, 
we establish the 
global existence and uniqueness of $C^1$ solutions for both systems. \autoref{sec:Positivity} is devoted 
to proving the 
positivity of state variables. 
In \autoref{sec:qualitative}, we examine the main qualitative properties, asymptotic behavior, 
and local stability of the systems.

%%%%%%%%%%%%%%%%%%%%%%%%%%%%%%%%%%%%%%%%%%%%%%%%%%%%%%%%%%%%
\section{Existence and uniqueness}\label{sec:Exist_and_unique}
%%%%%%%%%%%%%%%%%%%%%%%%%%%%%%%%%%%%%%%%%%%%%%%%%%%%%%%%%%%%
\begin{lemma}[\cite{Agarwal2001FixedPoint}, Theorem 1.7]\label{lem:Agarwal}
Let $I=[0,b]$, $f:I\times\mathbf{R}^n\to\mathbf{R}^n$ is continuous and 
Lipschitz in the second variable, that is, there exists $\alpha\geq 0$ such that 
\begin{equation}
\Vert f(t,y)-f(t,z)\Vert\leq \alpha \Vert y-z\Vert\quad\text{for all }y,\,z
\in\mathbf{R}^n. 
\end{equation}
Then there exists a unique $y\in C^1(I)$ that solves the initial value problem
\begin{align*}
y'(t) &= f(t,y(t)),\\
y(0) &=y_0.
\end{align*}
\end{lemma}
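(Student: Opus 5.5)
The plan is to recast the initial value problem as a fixed point problem for the integral operator
\[
(Ty)(t)=y_0+\int_0^t f(\tau,y(\tau))\,d\tau ,\qquad t\in I=[0,b],
\]
acting on the Banach space $C(I;\mathbf{R}^n)$. Since $f$ is continuous, a continuous $y$ satisfies $y=Ty$ if and only if $y\in C^1(I)$ solves $y'=f(t,y)$, $y(0)=y_0$; this equivalence follows from the fundamental theorem of calculus. Existence and uniqueness in $C^1(I)$ therefore reduce to existence and uniqueness of a fixed point of $T$ in $C(I)$. It is also clear that $T$ maps $C(I)$ into itself, because $\tau\mapsto f(\tau,y(\tau))$ is continuous on the compact interval $I$.

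To obtain a contraction on the whole interval $[0,b]$, rather than on a short interval only, I would use the Bielecki weighted norm
\[
\Vert y\Vert_\beta=\max_{t\in I} e^{-\beta t}\Vert y(t)\Vert ,\qquad \beta>\alpha .
\]
This norm is equivalent to the sup norm, since $e^{-\beta b}\Vert y\Vert_\infty\le\Vert y\Vert_\beta\le\Vert y\Vert_\infty$, so $(C(I),\Vert\cdot\Vert_\beta)$ is complete. The key estimate uses the Lipschitz condition:
\[
\Vert (Ty)(t)-(Tz)(t)\Vert\le\int_0^t\alpha\Vert y(\tau)-z(\tau)\Vert\,d\tau
\le \alpha\Vert y-z\Vert_\beta\int_0^t e^{\beta\tau}d\tau\le \frac{\alpha}{\beta}e^{\beta t}\Vert y-z\Vert_\beta .
\]
Multiplying by $e^{-\beta t}$ and taking the maximum gives $\Vert Ty-Tz\Vert_\beta\le(\alpha/\beta)\Vert y-z\Vert_\beta$. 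With $\beta=2\alpha$ this is a contraction with constant $1/2$. The case $\alpha=0$ is trivial: then $f$ does not depend on $y$ and $T$ is constant.

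The Banach contraction principle then yields a unique fixed point $y\in C(I)$. By the equivalence above, $y$ is the unique $C^1$ solution.

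The only point requiring care is obtaining a global contraction on $[0,b]$ rather than just on $[0,h]$ with $\alpha h<1$. The weighted norm handles this directly. An alternative would be to iterate on subintervals of length $h<1/\alpha$ and concatenate the solutions, or to show that some iterate $T^m$ is a contraction via the estimate $\alpha^m b^m/m!$. The weighted norm is the cleanest of these, and I would use it.
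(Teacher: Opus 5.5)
Your proof is correct and complete. You recast the problem as a fixed point of the integral operator and make that operator a global contraction on $[0,b]$ via the exponentially weighted (Bielecki) sup norm, treating the degenerate case $\alpha=0$ separately; this is exactly the standard Banach contraction proof of the cited Theorem 1.7. The paper gives no proof of its own and simply quotes that theorem, so your route matches the intended one.
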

We now prove that our two initial value problems have unique solutions.
\begin{lemma}
    The system \eqref{eq:non_auto_1}-\eqref{eq:non_auto_6}, \eqref{eq:initial_cond}  
     has a unique $C^1$ solution for $\gamma\geq 1$, and this solution is defined for all $t\geq 0$. 
\end{lemma}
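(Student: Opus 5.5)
The plan is to apply Lemma~\ref{lem:Agarwal} on every interval $[0,b]$, with a standard truncation device, and then remove the truncation using a priori bounds. Since $A(t)=t+A_0$ solves \eqref{eq:non_auto_6} explicitly, we substitute it and regard the system as a non-autonomous system in $y=(G,B,I,S,L)\in\mathbf{R}^5$. The right-hand side $f(t,y)$ is continuous in $(t,y)$ because $Y,F$ are assumed continuous and bounded by \eqref{eq:F_Y}. It is also locally Lipschitz in $y$, for the following reasons:
\begin{itemize}
\item $u$ is piecewise linear, hence globally Lipschitz with constant $s$.
\item $h(x)=|x|^\gamma/(c_1+|x|^\gamma)$ is globally Lipschitz for $\gamma\ge1$. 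Indeed, $h$ is $C^1$ away from $0$ with bounded derivative, has one-sided derivatives at $0$ (Remark~\ref{rem:frac_gamma}), and $h'(x)\to0$ as $|x|\to\infty$. This is exactly where $\gamma\ge1$ is used.
\item The products $SIG$, $GB$, $B\,h(G)$ and $e^{-\alpha_0 LIG}$ are smooth functions of polynomial arguments, so they are Lipschitz on bounded sets.
\end{itemize}
Since $f$ is not globally Lipschitz, fix $R>0$ and a radial retraction $P_R:\mathbf{R}^5\to \overline{B}(0,R)$, which is $1$-Lipschitz. Set $f_R(t,y)=f(t,P_R y)$. Then $f_R$ is continuous and globally Lipschitz in $y$ on $[0,b]\times\mathbf{R}^5$, so Lemma~\ref{lem:Agarwal} gives a unique $C^1$ solution $y_R$ on $[0,b]$. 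This solution solves the original system as long as $\|y_R(t)\|\le R$.

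The next step is a priori bounds, independent of $R$, on every finite interval $[0,b]$ for solutions of the original system. The $S$ and $L$ equations are linear, decoupled scalar equations with continuous bounded coefficients. Their explicit variation-of-constants solutions therefore give bounds $|S|,|L|\le M_1(b)$, and in fact positivity as well. For $G$, use $u(G)\ge0$ and, if positivity of $I,G$ is not yet known, estimate the exponential term carefully. The cleanest route is to show first, within the same existence argument, that the solution stays in the positive orthant on its maximal interval. On the boundary face $G=0$ we have $G'=k_1>0$. On $I=0$, $I'=k_6(B/B_0)h(G)\ge0$. On $B=0$, $B'=k_3e^{-\lambda t}>0$. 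On $S=0$ and $L=0$, $S',L'>0$. Hence $G,B,I,S,L\ge0$ (strictly $>0$ with more care). Once this holds, $e^{-\alpha_0LIG}\le1$, so $G'\le k_1$ and $0\le G\le G_0+k_1b$. Then $B'\le k_3$ gives $B\le B_0+k_3b$, and $I'\le -k_5I+k_6B/B_0$ gives $I$ bounded on $[0,b]$ by Gronwall.

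To avoid circularity with the positivity section, we could instead bound things crudely without signs. We have $|G'|\le k_1e^{\alpha_0|L||I||G|}+\dots$, but this exponential growth is the obstacle: for negative $G$ or $I$ the term $e^{-\alpha_0LIG}$ is not a priori bounded. I therefore expect the main difficulty to be exactly the invariance of the positive orthant for the maximal local solution. I would handle it by a first-exit-time argument. Let $t^*$ be the first time some component hits zero. On $[0,t^*]$ all components are nonnegative, the source terms above are nonnegative, and for $B,I,S,L$ the equations have the form $x'\ge -c(t)x$, which gives $x(t^*)\ge x(0)e^{-\int c}>0$. For $G$ we have $G'\ge -(k_2SI+s')G$, where $u(G)=0$ near $G=0$ since $G_u>0$; this again yields $G(t^*)>0$. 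This contradicts the definition of $t^*$.

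Finally, we choose $R>M(b)$, the a priori bound on $[0,b]$. The solution $y_R$ then never reaches the truncation region, so it solves the original problem on $[0,b]$. Uniqueness follows because any two solutions are bounded on $[0,b]$ and agree with $y_R$ by the uniqueness part of Lemma~\ref{lem:Agarwal} applied to $f_R$ with a large $R$. Since $b$ is arbitrary and the solutions are consistent on overlapping intervals, the unique $C^1$ solution extends to all of $t\ge0$.
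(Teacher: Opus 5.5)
Your proof is correct, and it takes a genuinely different route from the paper's.

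\textbf{The paper's route.} The paper applies Lemma~\ref{lem:Agarwal} to $f$ itself on $[0,b]\times\mathbf{R}^5$. It checks that $u$ and $h$ are Lipschitz (for $h$ only on intervals $[-a,a]$, citing $0\le t\le b$). It then treats the remaining terms as harmless because they are $C^1$, and concludes existence on every $[0,b]$, hence on $[0,\infty)$, in one step. This uses no a priori estimates and comes before positivity is known. That shortcut does not actually work. The terms $k_4y_1y_2$, $k_2y_4y_3y_1$, $y_2h(y_1)$ and $e^{-\alpha_0y_5y_3y_1}$ are only locally Lipschitz on $\mathbf{R}^5$; already $\partial f_2/\partial y_1=-k_4y_2$ is unbounded. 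So the global hypothesis of Lemma~\ref{lem:Agarwal} fails, and the argument as written gives only local existence.

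\textbf{What your route buys.} Your truncation $f_R(t,y)=f(t,P_Ry)$ supplies a genuinely globally Lipschitz field. Your $R$-independent bounds on $[0,b]$ are exactly what is needed to rule out blow-up. You also see correctly that these bounds need positivity first, since $e^{-\alpha_0LIG}$ is unbounded when $LIG<0$. The cost is that the positivity argument of \autoref{sec:Positivity} has to be carried out inside the existence proof, which your first-exit-time argument does. The gain is a complete proof of the global statement.

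\textbf{Details to tighten.}
\begin{itemize}
\item On $[0,t^*]$, justify $G'\ge-(k_2SI+s)G$ by $0\le u(G)\le sG$ for $G\ge0$. The fact that $u=0$ near $G=0$ does not cover times in $[0,t^*]$ when $G\ge G_u$.
\item Make the bootstrap explicit. Run the positivity and a priori bounds for $y_R$ on $[0,\tau_R]$, where $\tau_R$ is its exit time from $\overline{B}(0,R)$, on which $y_R$ solves the original system. Then $R>M(b)$ forces $\tau_R=b$.
\item The continuity of $Y$ and $F$ that you assume is not part of \eqref{eq:F_Y}. The paper's use of Lemma~\ref{lem:Agarwal} needs it equally.
\end{itemize}
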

\begin{proof}
Let $I=[0,b]$ where $b>0$ is arbitrary. In this case 
\begin{equation}
    f(t,y)=[f_1(t,y),f_2(t,y),f_3(t,y),f_4(t,y),f_5(t,y)]^T,
\end{equation}
where 
\begin{equation}
    y=[y_1,y_2,y_3,y_4,y_5]^T,
\end{equation}
\begin{align}
    f_1(t,y) &=k_1 e^{-\alpha_0 y_5 y_3 y_1} - k_2 y_4 y_3 y_1 - u(y_1), \label{eq:f_1} \\
    f_2(t,y) &= k_3 e^{-\lambda t} - k_4 y_1 y_2, \label{eq:f_2} \\
    f_3(t,y) &= -k_5 y_3 + k_6 \frac{y_2}{B_0} \frac{\vert{}y_1\vert{}^\gamma}{c_1 + \vert{}y_1\vert{}^\gamma}, \label{eq:f_3} \\
    f_4(t,y) &= k_7 - k_8 y_4 + k_9 Y(t) - k_{10} F(t) y_4 - k_{11} t y_4, \label{eq:f_4} \\
    f_5(t,y) &= k_{12} - k_{13} y_5 + k_{14} Y(t) - k_{15} F(t) y_5 - k_{16} t y_5, \label{eq:f_5}
\end{align}
and
\begin{equation*}
    y_0=[y_1(0),y_2(0),y_3(0),y_4(0),y_5(0)]^T=[G_0,B_0,I_0,S_0,L_0]^T.
\end{equation*}
In a finite-dimensional space, any two norms are equivalent; therefore, it suffices to show that 
$f_1(t,y),\ldots,f_5(t,y)$ are Lipschitz continuous with respect to the variable $y$. It is enough 
to prove that $u(y_1)$ and 
\begin{equation}
    h(y_1)=\frac{|y_1|^{\gamma}}{c_1+|y_1|^{\gamma}}, (\gamma \geq 1)
\end{equation}
are Lipschitz continuous, because the other functions are continuously differentiable. 
Obviously $|u(y_1)-u(z_1)|\leq s|y_1-z_1|$. Now consider the function 
$h(y_1)$. Since $0\leq t\leq b$ we have to prove that $h$ is Lipschitz continuous on $[-a,a]$ where 
$a$ is arbitrary fixed. If $\gamma>1$ then $h$ is continuously differentiable as we observed in 
 \autoref{rem:frac_gamma}, so $h$ is Lipschitz continuous. Let $\gamma=1$. We show that the function $h$ 
 is Lipschitz continuous. First, we show that $h$ is continuously differentiable on $[-a,0]$ and on 
 $[0,a]$. Since $h$ is an even function we consider only the interval $[0,a]$. In this case
 \begin{equation}
    h(y_1)=\frac{y_1}{c_1+y_1}.
\end{equation}
If $y_1\in (0,a]$ then 
\begin{equation}
    h'(y_1)=\frac{c_1}{(c_1+y_1)^2}.
\end{equation}
The right-hand-side derivative at $0$ is
\begin{equation}
    h'(0+)=\lim_{y_1 \to 0+} \frac{h(y_1)-h(0)}{y_1-0}=\frac{1}{c_1}.
\end{equation}
On the other hand
\begin{equation}
    \lim_{y_1 \to 0+} h'(y_1)=\frac{1}{c_1}=h'(0+).
\end{equation}
Thus $h$ is continuously differentiable on $[-a,0]$ and on 
 $[0,a]$ which implies the Lipschitz continuity on these two intervals. Obviously if 
 $y\geq 0$ then $|h'(y)|\leq 1/c_1$. By the Lagrange Mean Value Theorem, the function $h$ 
 is Lipschitz continuous on $[-a,0]$ and on $[0,a]$ with Lipschitz constant $L:=1/c_1$. Now let 
$-a\leq y_1<0$ and $a\geq y_2>0$. Since $h$ is an even function $h(y_1)=h(-y_1)$. Here $-y_1\in (0,a]$. 
Then applying the above proved $L$-Lipschitz estimation we obtain
\begin{align*}
    |h(y_1)-h(y_2)| &=|h(-y_1)-h(y_2)| \\
                    &\leq |h(-y_1)-h(0)|+|h(0)-h(y_2)| \\
                    &\leq L (-y_1)+ L y_2 \\
                    &= L(y_2-y_1)=L|y_2-y_1| \\
                    &= L|y_1-y_2|.
\end{align*}
Thus we verified that $h$ is Lipschitz continuous for $\gamma=1$. Now let’s assume that $0<\gamma<1$. 
We show that $h$ is not Lipschitz continuous for any $[0,a]$ interval. We prove indirectly. 
Assume that there exist $a>0$ and $L>0$ such that for any $y_1,y_2\in [0,a]$ we have
\begin{equation}\label{eq:Lip_ind}
    |h(y_1)-h(y_2)|\leq L |y_1-y_2|.
\end{equation}
Let’s choose $y_2=0$. Since $h(0)=0$, formula \eqref{eq:Lip_ind} is reduced to 
\begin{equation*}
    \vert{}h(y_1) - 0\vert{} \le L \vert{}y_1 - 0\vert{} \implies h(y_1) \le L y_1,
\end{equation*}
that is,
\begin{equation*}
    \frac{y_1^{\gamma - 1}}{c_1 + y_1^\gamma} \leq L.
\end{equation*}
Since
\begin{equation*}
    \lim_{y_1\to 0+} \frac{y_1^{\gamma - 1}}{c_1 + y_1^\gamma}=+\infty
\end{equation*}
which is impossible. We obtained contradiction, so we proved that $h$ is 
not Lipschitz continuous for $0<\gamma<1$. 
 
Since $0\leq t\leq b$ and $Y(t)$, $F(t)$ are bounded functions, 
from the previous results it follows that the function $f(t,y)$ 
is Lipschitz continuous in the second variable for $\gamma\geq 1$ independently of  $t\in [0,b]$, 
but the Lipschitz constant may depend on $b$. 
Since $b>0$ is arbitrary and the solution is unique we obtain that the solution is 
defined for all $t\geq 0$. 
\end{proof}

In the same way we obtain 
\begin{lemma}
    The system \eqref{eq:auto_1}-\eqref{eq:auto_5}, \eqref{eq:initial_cond} 
    has a unique $C^1$ solution for $\gamma\geq 1$, and this solution is defined for all $t\geq 0$.
\end{lemma}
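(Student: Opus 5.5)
We follow the proof of the previous lemma for the non-autonomous system almost word for word. We write the autonomous system \eqref{eq:auto_1}--\eqref{eq:auto_5} as $y'=f(t,y)$ with $y=[y_1,\dots,y_5]^T=[G,B,I,S,L]^T$, where
\begin{align*}
f_1(t,y) &= k_1 e^{-\alpha_0 y_5 y_3 y_1}-k_2 y_4 y_3 y_1-u(y_1),\\
f_2(t,y) &= k_3-k_4 y_1 y_2,\\
f_3(t,y) &= -k_5 y_3+k_6 \frac{y_2}{B_0}\, h(y_1),\\
f_4(t,y) &= k_7-k_8 y_4+k_9 Y(t)-k_{10}F(t) y_4,\\
f_5(t,y) &= k_{12}-k_{13} y_5+k_{14}Y(t)-k_{15}F(t) y_5 .
\end{align*}
Here $h(y_1)=|y_1|^\gamma/(c_1+|y_1|^\gamma)$. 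The $t$-dependence enters only through the bounded inputs $Y,F$ of \eqref{eq:F_Y}. These are the earlier $f_1,\dots,f_5$ with $e^{-\lambda t}$ replaced by $1$ and the terms $k_{11}ty_4$ and $k_{16}ty_5$ dropped. The nonsmooth ingredients are unchanged: $u$ is $s$-Lipschitz, and $h$ is Lipschitz on every $[-a,a]$ for $\gamma\ge1$. Both facts were proved in the previous lemma and we reuse them. All remaining terms are polynomial or exponential in $y$ and hence $C^1$. So $f$ is Lipschitz in $y$ on $[0,b]\times K$ for every compact $K$, uniformly in $t$. We need measurability and boundedness of $Y,F$; if we use Lemma~\ref{lem:Agarwal} literally, we also assume they are continuous.

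The main obstacle is that $f$ is \emph{not} globally Lipschitz on $\mathbf{R}^5$, because of the cubic terms $y_4y_3y_1$ and $y_1y_2$ and the exponential. Lemma~\ref{lem:Agarwal} therefore cannot be applied verbatim, and we have to supply a priori bounds. We plan to do this in three steps.

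First, we apply Lemma~\ref{lem:Agarwal} to a truncated field $f(t,\pi_R(y))$, where $\pi_R$ is the radial projection onto the ball of radius $R$. This field is globally Lipschitz. Its unique $C^1$ solution coincides with a solution of the original system as long as $\|y(t)\|<R$. This yields a unique maximal solution on some interval $[0,T)$.

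Second, we show that the open positive orthant is invariant. At a first time where a component vanishes while the others are still positive, that component has a strictly positive derivative. Indeed,
\begin{align*}
f_1 &= k_1>0 \quad\text{at } G=0,\\
f_2 &= k_3>0 \quad\text{at } B=0,\\
f_4 &= k_7+k_9Y\ge k_7>0 \quad\text{at } S=0,\\
f_5 &\ge k_{12}>0 \quad\text{at } L=0.
\end{align*}
For $I$, variation of constants gives $I(t)\ge I_0e^{-k_5t}>0$, since the forcing term is nonnegative while $B\ge0$. A contradiction at the first exit time then gives positivity on $[0,T)$.

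Third, we derive bounds on $[0,T)$ for finite $T$. The equations for $S$ and $L$ are linear with bounded coefficients, so
\[
0<S(t)\le \max\{S_0,(k_7+k_9c_3)/k_8\},
\]
and similarly for $L$. Positivity makes the exponential at most $1$ and the other two terms in $G'$ nonpositive, so $G'\le k_1$ and $G(t)\le G_0+k_1T$. Likewise $B'\le k_3$ gives $B(t)\le B_0+k_3T$. Finally $h\le1$ gives $I'\le -k_5I+k_6B/B_0$, so $I$ is bounded on $[0,T)$ by variation of constants.

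Hence the solution stays in a compact set on every finite interval and cannot blow up. It follows that $T=\infty$, and uniqueness follows from the local Lipschitz property. Since $b>0$ was arbitrary, the unique $C^1$ solution exists for all $t\ge0$, which proves the lemma. The same three steps also close the gap about the global Lipschitz condition in the proof of the previous lemma.
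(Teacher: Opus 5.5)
Your proof is correct, but it is not the paper's route. The paper proves this lemma by ``in the same way'', i.e.\ by the argument of the preceding lemma. That argument applies Lemma~\ref{lem:Agarwal} on $[0,b]$ directly, asserting that $f$ is Lipschitz in $y$ because $u$ and $h$ are Lipschitz and ``the other functions are continuously differentiable''. It then infers global existence from the arbitrariness of $b$.

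As you note, continuous differentiability only gives local Lipschitz continuity. The terms $k_4y_1y_2$, $k_2y_4y_3y_1$, $y_2h(y_1)$ and $e^{-\alpha_0y_5y_3y_1}$ are not Lipschitz on $\mathbf{R}^5$. The paper's reduction of $h$ to $[-a,a]$ ``since $0\le t\le b$'' tacitly assumes the state stays bounded on bounded time intervals, which is exactly what has to be proved.

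Your three steps supply this missing part:
\begin{itemize}
\item Truncation gives a unique maximal solution on $[0,T)$.
\item Positivity is proved on that maximal interval. The paper's positivity arguments in Section~\ref{sec:Positivity} presuppose the global solution, so they cannot be quoted here without this reordering.
\item Positivity makes the nonlinear terms either bounded above ($e^{-\alpha_0LIG}\le 1$) or nonpositive ($-k_2SIG$, $-u(G)$, $-k_4GB$). This gives bounds at most linear in $T$, which exclude blow-up.
\end{itemize}

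The paper's route would be a one-liner if the global Lipschitz hypothesis held, but it does not. Yours costs a few lines and is actually valid. It also makes explicit the continuity of $Y,F$ that the $C^1$ claim and Lemma~\ref{lem:Agarwal} silently require; under that assumption your pointwise derivative argument at the first exit time for $S$ and $L$ is fine. Your closing remark is also right: the same bounds, using $-k_{11}tS\le 0$ and $k_3e^{-\lambda t}\le k_3$, repair the non-autonomous lemma.
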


%%%%%%%%%%%%%%%%%%%%%%%%%%%%%%%%%%%%%%%%%%%%%%%%%%%%%%%%%%%%
\section{Positivity of the solution}\label{sec:Positivity}
%%%%%%%%%%%%%%%%%%%%%%%%%%%%%%%%%%%%%%%%%%%%%%%%%%%%%%%%%%%%

First, we will prove that $G(t) > 0$ ($t\geq 0$)  for both systems. 
Then we will discuss the two systems separately. 
\begin{lemma}\label{lem:u_prime}
Let $u:[0,\infty)\to\mathbb{R}$ be a $C^1$ function. Let's assume that $u(0) > 0$. 
Suppose that there exist $0<x_1<x_2$ such that $u(x_1)=0$ and $u(x)<0$ for all $x\in(x_1,x_2]$. 
Then $u'(x_1)\leq 0$.
\begin{proof}
 
Using the definition 
\begin{equation*}
    u'(x_1)=u'(x_1+0) = \lim_{x \to x_1+0} \frac{u(x) - u(x_1)}{x - x_1}.
\end{equation*}
Here $u(x) - u(x_1)=u(x)<0$ and $x-x_1>0$, therefore $u'(x_1)\leq 0$.  
\end{proof}
\end{lemma}
\begin{lemma}\label{lem:G pos}
    The function $G$ is positive for both systems.
\end{lemma}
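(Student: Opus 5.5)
We argue by contradiction, using the first-crossing idea behind \autoref{lem:u_prime}. The $G$-equation \eqref{eq:non_auto_1} coincides with \eqref{eq:auto_1}, so one argument covers both systems. It only uses that $G,S,I,L$ are continuous on $[0,\infty)$, which holds because the solution is $C^1$ by the existence lemmas of \autoref{sec:Exist_and_unique}.

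Suppose $G$ is not positive on $[0,\infty)$. Since $G(0)=G_0>0$ and $G$ is continuous, the set $\{t\geq 0: G(t)\leq 0\}$ is nonempty and closed. Let $t_1>0$ be its infimum. Then $G(t_1)=0$ and $G(t)>0$ for $t\in[0,t_1)$.

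The key step is to evaluate the right-hand side of \eqref{eq:non_auto_1} at $t_1$. Since $G(t_1)=0$, we have $u(G(t_1))=u(0)=0$ because $0<G_u$. The product term $k_2S(t_1)I(t_1)G(t_1)$ also vanishes, and $e^{-\alpha_0 L(t_1)I(t_1)G(t_1)}=1$. Hence
\begin{equation*}
G'(t_1)=k_1>0,
\end{equation*}
and no sign information on $S,I,L$ is needed. On the other hand, $G(t)>0=G(t_1)$ for $t<t_1$, so the left difference quotients $\frac{G(t)-G(t_1)}{t-t_1}$ are negative. Letting $t\to t_1-$ gives $G'(t_1)\leq 0$, which contradicts $G'(t_1)=k_1$.

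The only subtle point is the direction of the one-sided derivative. \autoref{lem:u_prime} is phrased with a right-hand limit and the hypothesis $u<0$ just after the zero. Using the left-hand limit at the first zero, as above, avoids having to show that $G$ becomes negative immediately after $t_1$. Alternatively, since $G'(t_1)=k_1>0$, continuity of $G'$ shows that $G$ is strictly increasing near $t_1$, so $G<0$ on some interval $(t_1-\delta,t_1)$. This contradicts $G>0$ on $[0,t_1)$. Either route gives $G(t)>0$ for all $t\geq 0$ in both systems.
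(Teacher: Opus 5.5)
Your proof is correct and uses the paper's idea: at a zero of $G$ the equation forces $G'=k_1>0$, which contradicts the sign of the one-sided derivative at a first crossing. The paper splits this into two steps. It first shows $G\geq 0$ via the right-hand derivative in \autoref{lem:u_prime}, then $G>0$ because a zero of a nonnegative $C^1$ function is a minimum with $G'=0$. Your left-hand difference quotient at the first zero of $G$ handles both steps at once.
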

\begin{proof}
First we prove $G(x)\geq 0$ for $x\geq 0$. 
We prove this indirectly. Let's assume that there exists $x_2$ such that $G(x_2)<0$. Denote
\begin{equation*}
    x_1:=\sup \{x\,|\,0\leq x<x_2, G(x)>0\}.
\end{equation*}
Since $G(0)>0$ it follows $0<x_1$. By the continuity of $G$ we have $G(x_1)=0$ and $x_1<x_2$, and 
$G(x)<0$ for $x_1<x<x_2$. By \autoref{lem:u_prime} we obtain $G'(x_1)\leq 0$. 
However formula \eqref{eq:non_auto_1} gives $G'(x_1)=k_1>0$. This contradiction 
implies $G\geq 0$. Now we prove that $G$ is positive. We prove this indirectly. 
Let's assume that there exists $0<x_0$ such that $G(x_0)=0$. Since $G$ is a $C^1$ function 
we get $G'(x_0)=0$. But formula \eqref{eq:non_auto_1} gives $G'(x_0)=k_1>0$. 
This contradiction implies $G> 0$. 
\end{proof}

\subsection{\texorpdfstring{The system $A(t)=t+A_0$}{The system A(t)=t+A0}}

\begin{lemma}
    $L(t), S(t), I(t), B(t)>0$ for all $t\geq 0$.
\end{lemma}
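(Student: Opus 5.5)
The plan is to integrate the scalar equations for $S$ and $L$ explicitly, and then to treat $B$ and $I$ in order, each as a linear equation with a known, already controlled coefficient. In every case positivity will follow from the variation-of-constants formula with a nonnegative forcing term, rather than from the sign argument of \autoref{lem:u_prime}. This avoids any issue with first touching times.

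For $S$, write \eqref{eq:non_auto_4} with $A(t)-A_0=t$ as $S'(t)=-p(t)S(t)+q(t)$. Here $p(t)=k_8+k_{10}F(t)+k_{11}t$, and $q(t)=k_7+k_9Y(t)\geq k_7>0$ by \eqref{eq:F_Y}. Let $P(t)=\int_0^t p(\tau)\,d\tau$. Then
\begin{equation*}
S(t)=e^{-P(t)}\Bigl(S_0+\int_0^t e^{P(\tau)}q(\tau)\,d\tau\Bigr)>0,
\end{equation*}
since $S_0>0$ and the integrand is nonnegative. The same formula with $k_{12},\dots,k_{16}$ gives $L(t)>0$. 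To use this formula, $F$ and $Y$ must be integrable. I will assume they are at least locally integrable, for instance measurable and bounded by \eqref{eq:F_Y}, and otherwise interpret the formula in the Carathéodory sense. This is implicit in the existence lemma.

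For $B$, I will use \autoref{lem:G pos}, which gives $G>0$, though only continuity of $G$ is actually needed here. Set $Q(t)=k_4\int_0^tG(\tau)\,d\tau$, so that
\begin{equation*}
B(t)=e^{-Q(t)}\Bigl(B_0+k_3\int_0^t e^{Q(\tau)-\lambda\tau}\,d\tau\Bigr)>0.
\end{equation*}
For $I$, write \eqref{eq:non_auto_3} as $I'=-k_5I+r(t)$ with $r(t)=k_6\frac{B(t)}{B_0}h(G(t))$. This forcing satisfies $r\geq0$ because $B>0$ (just proved) and $h\geq0$; in fact $r>0$ because $G>0$. Hence
\begin{equation*}
I(t)=e^{-k_5t}\Bigl(I_0+\int_0^te^{k_5\tau}r(\tau)\,d\tau\Bigr)>0.
\end{equation*}

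The only real care point is the order of the steps: $B$ must be handled before $I$, because the sign of the forcing in the $I$ equation depends on $B$. Each of these formulas is justified because the unique $C^1$ solution from the existence lemma satisfies the corresponding scalar linear ODE, whose solution is unique. No step looks genuinely hard.
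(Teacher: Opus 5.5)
Your proof is correct and follows the paper's route. Like the paper, you write explicit variation-of-constants formulas for $L$, $S$, $B$, $I$, with positive initial data and nonnegative forcing. Your choice to handle $B$ before $I$, since the forcing in the $I$ equation needs $B\geq 0$, is slightly more careful than the paper, which asserts $I>0$ from its explicit formula before it has established $B>0$.
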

\begin{proof}
    In this case 
\begin{equation}
L'(t)=k_{12}+k_{14}Y(t)-(k_{15}F(t)+k_{16}t+k_{13})L(t).
\end{equation}
The solution is
\begin{align}
L(t) = & \left(\int_0^t \left( k_{12} + k_{14}Y(z) \right) 
e^{\int_0^z k_{15}F(x)+k_{16}x+k_{13} \, dx} \, dz +L_0 \right) \nonumber \\
   &\times e^{-\int_0^t k_{15}F(x)+k_{16}x+k_{13} \, dx}. \label{eq:L gen exp sol}
\end{align}
Here $k_{12}+k_{14}Y(z)>0$ and $L_0>0$ so 
\begin{equation}
    L(t)>0\quad (t\geq 0).
\end{equation}
Similarly, 
\begin{equation}
S'(t)=k_7+k_9 Y(t)-(k_{10}F(t)+k_{11}t+k_8)S(t)
\end{equation}
The solution is
\begin{align}
S(t) = & \left(\int_0^t \left( k_{7} + k_{9}Y(z) \right) 
e^{\int_0^z k_{10}F(x)+k_{11}x+k_{8} \, dx} \, dz +S_0 \right) \nonumber \\
   &\times e^{-\int_0^t k_{10}F(x)+k_{11}x+k_{8} \, dx}. \label{eq:S gen exp sol}
\end{align}
Here $k_{7}+k_{9}Y(z)>0$ and $S_0>0$ so 
\begin{equation}
    S(t)>0\quad (t\geq 0).
\end{equation}
Now we consider $I(t)$.
Then we obtain
\begin{equation}\label{eq:I gen exp sol}
I(t)=\left(
\int_0^t k_6\frac{B(z)}{B_0}
\frac{(G(z))^{\gamma}}{c_1+(G(z))^{\gamma}}e^{k_5z}\,dz+
I_0
\right)e^{-k_5t}
\end{equation}
Here $B_0,I_0>0$ so
\begin{equation}
   I(t)>0\quad (t\geq 0).
\end{equation}
Now we consider $B(t)$.
Then we obtain
\begin{equation}\label{eq:B gen exp sol}
B(t)=\left(
\int_0^t k_3 e^{-\lambda z +k_4 \int_0^z G(x)\,dx}
\,dz+
B_0
\right)e^{-k_4 \int_0^t G(z)\,dz}
\end{equation}
Here $B_0>0$ so  
\begin{equation}
   B(t)>0\quad (t\geq 0).
\end{equation}
\end{proof}

\subsection{\texorpdfstring{The system $A(t)=A_0$}{The system A(t)=A0}}

\begin{lemma}
    $L(t), S(t), I(t), B(t)>0$ for all $t\geq 0$.
\end{lemma}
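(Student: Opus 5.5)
The plan is to repeat the argument of the previous lemma for the case $A(t)=t+A_0$, now for the autonomous system \eqref{eq:auto_1}--\eqref{eq:auto_5}. The point is that each of the equations \eqref{eq:auto_2}--\eqref{eq:auto_5} is a scalar linear first-order ODE in its own unknown. The other components act as known continuous coefficients, since by the existence lemma the solution is $C^1$ on $[0,\infty)$. So each component can be written with the variation-of-constants formula, and positivity can be read off from that formula.

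\textbf{Equations for $L$ and $S$.} First rewrite \eqref{eq:auto_5} as
$L'(t)=k_{12}+k_{14}Y(t)-(k_{15}F(t)+k_{13})L(t)$. Its solution is given by \eqref{eq:L gen exp sol} with the terms $k_{16}x$ deleted:
\begin{equation*}
L(t)=\left(\int_0^t (k_{12}+k_{14}Y(z))e^{\int_0^z (k_{15}F(x)+k_{13})\,dx}\,dz+L_0\right)e^{-\int_0^t (k_{15}F(x)+k_{13})\,dx}.
\end{equation*}
By \eqref{eq:F_Y} we have $k_{12}+k_{14}Y(z)\ge k_{12}>0$, and $L_0>0$. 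The exponential factor is positive, so $L(t)>0$ for all $t\ge0$. The same argument applied to \eqref{eq:auto_4}, i.e. \eqref{eq:S gen exp sol} without the $k_{11}x$ term, gives $S(t)>0$.

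\textbf{Equation for $B$.} For \eqref{eq:auto_2}, $B'=k_3-k_4G(t)B$, the formula becomes \eqref{eq:B gen exp sol} with $e^{-\lambda z}$ replaced by $1$:
\begin{equation*}
B(t)=\left(\int_0^t k_3e^{k_4\int_0^z G(x)\,dx}\,dz+B_0\right)e^{-k_4\int_0^t G(x)\,dx}.
\end{equation*}
Here $k_3>0$ and $B_0>0$, so $B(t)>0$.

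\textbf{Equation for $I$.} Finally, \eqref{eq:auto_3} gives exactly \eqref{eq:I gen exp sol}. Its integrand is nonnegative, because $B>0$ from the previous step and $G>0$ by \autoref{lem:G pos}, so $|G|^\gamma=G^\gamma\ge0$. Since also $I_0>0$, we get $I(t)\ge I_0e^{-k_5t}>0$.

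The only care needed is the order of the steps. $B$ must be handled before $I$, because the sign of $B$ enters the integrand for $I$. Apart from that, each step is a direct verification of the integrating-factor formula, which one can check by differentiating it.
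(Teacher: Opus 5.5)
Your proof is correct and follows the paper's argument: you write each of the linear scalar equations for $L$, $S$, $B$, $I$ in variation-of-constants form and read off positivity from the positive forcing terms and positive initial data. Your ordering ($B$ before $I$) is slightly more careful than the paper's. The paper treats $I$ first and cites only $B_0, I_0>0$, so the sign of $B(z)$ in the integrand of \eqref{eq:I gen exp sol} is left implicit; your order justifies it explicitly.
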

\begin{proof}
    In this case 
\begin{equation}
L'(t)=k_{12}+k_{14}Y(t)-(k_{15}F(t)+k_{13})L(t).
\end{equation}
The solution is
\begin{align}
L(t) = & \left(\int_0^t \left( k_{12} + k_{14}Y(z) \right) 
e^{\int_0^z k_{15}F(x)+k_{13} \, dx} \, dz +L_0 \right) \nonumber \\
   &\times e^{-\int_0^t k_{15}F(x)+k_{13} \, dx}. \label{eq:L gen exp sol A0}
\end{align}
Here $k_{12}+k_{14}Y(z)>0$ and $L_0>0$ so 
\begin{equation}
    L(t)>0\quad (t\geq 0).
\end{equation}
Similarly, 
\begin{equation}
S'(t)=k_7+k_9 Y(t)-(k_{10}F(t)+k_8)S(t)
\end{equation}
The solution is
\begin{align}
S(t) = & \left(\int_0^t \left( k_{7} + k_{9}Y(z) \right) 
e^{\int_0^z k_{10}F(x)+k_{8} \, dx} \, dz +S_0 \right) \nonumber \\
   &\times e^{-\int_0^t k_{10}F(x)+k_{8} \, dx}. \label{eq:S gen exp sol A0}
\end{align}
Here $k_{7}+k_{9}Y(z)>0$ and $S_0>0$ so 
\begin{equation}
    S(t)>0\quad (t\geq 0).
\end{equation}
Since equation \eqref{eq:non_auto_3} is the same as \eqref{eq:auto_3} we obtain
\begin{equation}
   I(t)>0\quad (t\geq 0).
\end{equation}
Now we consider $B(t)$. 
Then we obtain
\begin{equation}\label{eq:B gen exp sol A0}
B(t)=\left(
\int_0^t k_3 e^{k_4 \int_0^z G(x)\,dx}
\,dz+
B_0
\right)e^{-k_4 \int_0^t G(z)\,dz}
\end{equation}
Here $B_0>0$ so
\begin{equation}
   B(t)>0\quad (t\geq 0).
\end{equation}
\end{proof}

\section{Qualitative properties of the solutions}\label{sec:qualitative}

First, we set an upper bound on $G(t)$ that applies to both systems.
\begin{theorem}\label{thm:G_upper_est}
    We have for all $t\geq 0$  
\begin{equation*}
    G(t)\leq \max\left(\frac{k_1}{s}+G_u,G_0 \right).
\end{equation*}
\end{theorem}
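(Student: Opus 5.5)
The plan is a barrier (first-exit) argument that mirrors the proof of \autoref{lem:G pos}. Set $M:=\max\left(\frac{k_1}{s}+G_u,G_0\right)$. The key observation is a one-sided differential inequality for $G$. By \autoref{lem:G pos} and the positivity lemmas for $L,S,I$ (in both subsections of \autoref{sec:Positivity}), we have $\alpha_0 L(t)I(t)G(t)\geq 0$, hence $k_1e^{-\alpha_0 L I G}\leq k_1$. Also $k_2S(t)I(t)G(t)>0$ for all $t\geq 0$. Consequently, for both systems \eqref{eq:non_auto_1} and \eqref{eq:auto_1},
\begin{equation*}
G'(t)< k_1-u(G(t))\qquad (t\geq 0).
\end{equation*}
Whenever $G(t)\geq \frac{k_1}{s}+G_u$, we have $G(t)\geq G_u$, so $u(G(t))=s(G(t)-G_u)\geq k_1$. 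Therefore $G'(t)<0$ at every time at which $G(t)\geq \frac{k_1}{s}+G_u$.

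Next comes the indirect argument. Suppose $G(x_2)>M$ for some $x_2>0$, and let
\begin{equation*}
x_1:=\sup\{x\,|\,0\leq x<x_2,\ G(x)\leq M\}.
\end{equation*}
This set contains $0$ because $G(0)=G_0\leq M$. By continuity, $G(x_1)=M$, $x_1<x_2$, and $G(x)>M$ on $(x_1,x_2]$. Applying the reflected version of \autoref{lem:u_prime} to the function $M-G$ gives $G'(x_1)\geq 0$. There is a harmless edge case: if $x_1=0$ the lemma as stated does not apply directly. The same one-sided difference quotient still works, since $(G(x)-G(x_1))/(x-x_1)>0$ for $x>x_1$. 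On the other hand, $G(x_1)=M\geq \frac{k_1}{s}+G_u$, so the first step gives $G'(x_1)<0$. This contradiction yields $G(t)\leq M$ for all $t\geq 0$.

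The only delicate point is making sure the strict inequality $G'<0$ holds at the boundary level $M$ itself, including the case $M=\frac{k_1}{s}+G_u$ exactly. This is exactly why we keep the strictly positive term $k_2SIG$, which uses $G,S,I>0$ from \autoref{sec:Positivity}. If one preferred not to rely on strictness, an alternative is to compare against $M+\varepsilon$ and let $\varepsilon\to 0$. Beyond this, the argument is uniform for both systems, since the $G$-equation is identical in \eqref{eq:non_auto_1} and \eqref{eq:auto_1}.
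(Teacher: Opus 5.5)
Your argument is correct, but it takes a different route from the paper. The paper keeps only the non-strict inequality $G'(t)\le k_1-u(G(t))$ and then splits into cases according to where $G$ lies relative to $G_u$. On any interval where $G\ge G_u$ it linearizes the excretion term to $G'\le (k_1+sG_u)-sG$. It then applies the comparison result Corollary~\ref{cor:diffineq_cor}, which comes from the Gronwall-type Lemma~\ref{lem:Gronwall}, starting either from $t=0$ (cases (b), (c)) or from a time $\alpha$ with $G(\alpha)=G_u$ (case (d)). This gives explicit bounds such as $G(t)\le G_u+\frac{k_1}{s}-\frac{k_1}{s}e^{-s(t-\alpha)}$ on each excursion above $G_u$.

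You instead run a first-exit barrier argument directly at the level $M$. This needs no integration of the inequality, but it does need strictness at the crossing time when $M=\frac{k_1}{s}+G_u$. You correctly obtain that strictness from $k_2SIG>0$, or alternatively via comparison with $M+\varepsilon$.

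The paper's route buys quantitative information. Excursions above $G_u$ that start at $G_u$ stay strictly below $G_u+k_1/s$, and while $G\ge G_u$ the bound relaxes exponentially from $G_0$ toward $G_u+k_1/s$. Your route buys economy and completeness, since it handles every trajectory at once. The paper's cases (a)--(d) need some implicit patching: case (c) only yields the bound on $[0,T_0]$. Also, a trajectory that starts with $G_0>G_u$, dips below $G_u$, and later rises above it again is not literally covered by case (d), which assumes $G_0\le G_u$.

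One small remark: Lemma~\ref{lem:u_prime} also assumes $u(0)>0$, which for $M-G$ means $G_0<M$. That fails whenever $G_0=M$, even if $x_1>0$, not only in your edge case $x_1=0$. Since that hypothesis plays no role in the lemma's proof, it is cleanest to use your direct one-sided difference-quotient argument in all cases.
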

\begin{proof}
Since $L(t), S(t), I(t)>0$ we get 
\begin{equation*}
    G'(t)\leq k_1-u(G(t)).
\end{equation*}
Now we distinguish four cases.

(a) $G(t)\leq G_u$ for all $t\geq 0$. In this case, the upper bound holds. 

(b) $G(t)\geq G_u$ for all $t\geq 0$. Then we get 
\begin{equation*}
G'(t)\leq (k_1+sG_u)-sG(t)\quad (t\geq 0).
\end{equation*}
By \autoref{cor:diffineq_cor} we have 
\begin{equation*}
G(t)\leq \max \left(\frac{k_1}{s}+G_u,G_0\right) \quad (t\geq 0).
\end{equation*}

(c) There exists $T_0>0$ such that $G(t)\geq G_u$ for all $t\in [0,T_0]$. Then 
by  \autoref{cor:diffineq_cor} we have 
\begin{equation*}
G(t)\leq \max \left(\frac{k_1}{s}+G_u,G_0\right) \quad (t\in [0,T_0]).
\end{equation*}

(d) $G_0\leq G_u$ and there exists $t_0$ such that $G(t_0)>G_u$. Then there exist 
$0\leq\alpha, \beta> t_0$ such that $G(\alpha)=G_u$ and $G(t)>G_u$ for all 
$\alpha <t <\beta$. 
Then we get 
\begin{equation}
G'(t)\leq (k_1+sG_u)-sG(t)\quad (\alpha\leq t<\beta).
\end{equation}
Then \autoref{cor:diffineq_cor} yields 
\begin{align}
G(t) &\leq G_ue^{-s(t-\alpha)}+\frac{k_1+sG_u}{s}
\left(1-e^{-s(t-\alpha)}\right)\\
&=\frac{k_1}{s}+G_u-\frac{k_1}{s}e^{-s(t-\alpha)}\quad (\alpha\leq t<\beta) \nonumber\\
&<G_u+\frac{k_1}{s} \nonumber.
\end{align}
\end{proof}

\subsection{\texorpdfstring{The system $A(t)=t+A_0$}{The system A(t)=t+A0}}

\begin{lemma}\label{lem:lim L S}
\begin{equation*}
     \lim_{t\to\infty}L(t)=0,\quad \lim_{t\to\infty}S(t)=0. 
\end{equation*}
\end{lemma}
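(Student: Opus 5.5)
We work directly from the explicit representation \eqref{eq:L gen exp sol}; the argument for $S$ via \eqref{eq:S gen exp sol} is identical after replacing $k_{12},k_{14},k_{15},k_{16},k_{13}$ by $k_7,k_9,k_{10},k_{11},k_8$. Set
\begin{equation*}
\Phi(z)=\int_0^z \bigl(k_{15}F(x)+k_{16}x+k_{13}\bigr)\,dx .
\end{equation*}
By \eqref{eq:F_Y}, $\Phi(z)\geq k_{13}z+\frac{k_{16}}{2}z^2$, so $\Phi(t)\to\infty$ and the term $L_0e^{-\Phi(t)}$ tends to $0$. The numerator factor satisfies $k_{12}+k_{14}Y(z)\leq k_{12}+k_{14}c_3=:M$. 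Hence it remains to show that
\begin{equation*}
J(t):=e^{-\Phi(t)}\int_0^t e^{\Phi(z)}\,dz\to 0 .
\end{equation*}

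The plan is to use L'H\^opital's rule on $\int_0^t e^{\Phi(z)}dz \big/ e^{\Phi(t)}$. Both the numerator and the denominator tend to $+\infty$, and the quotient of derivatives is
\begin{equation*}
\frac{e^{\Phi(t)}}{\Phi'(t)e^{\Phi(t)}}=\frac{1}{k_{15}F(t)+k_{16}t+k_{13}}\leq\frac{1}{k_{16}t+k_{13}}\to 0 .
\end{equation*}
Here we need $\Phi$ to be differentiable. If $F$ is only assumed bounded, and possibly not continuous, we avoid L'H\^opital and compare directly instead. For $0\leq z\leq t$ we have
\begin{equation*}
\Phi(t)-\Phi(z)\geq \int_z^t (k_{16}x+k_{13})\,dx\geq (k_{16}z+k_{13})(t-z).
\end{equation*}
A simpler sufficient bound is $\Phi(t)-\Phi(z)\geq k_{16}z(t-z)$. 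Split the integral at $z=t/2$:
\begin{equation*}
\int_0^{t/2} e^{-(\Phi(t)-\Phi(z))}\,dz\leq \frac t2 e^{-\frac{k_{16}}{2}(t^2-t^2/4)}\to0,
\end{equation*}
using $\Phi(t)-\Phi(z)\geq \frac{k_{16}}{2}(t^2-z^2)$. On $[t/2,t]$, use $\Phi(t)-\Phi(z)\geq \frac{k_{16}t}{2}(t-z)$, which gives
\begin{equation*}
\int_{t/2}^t e^{-(\Phi(t)-\Phi(z))}\,dz\leq \frac{2}{k_{16}t}.
\end{equation*}
Combining the two pieces, $0<L(t)\leq M J(t)+L_0e^{-\Phi(t)}=O(1/t)$. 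This also yields the $O(1/t)$ rate announced in the introduction.

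The only delicate step is this estimate of $J(t)$. The crude bound $\int_0^t e^{\Phi}\leq t e^{\Phi(t)}$ is useless, so the splitting, or the L'H\^opital argument under continuity of $F$, is where the actual work lies. Positivity of $L$ and $S$ (the lemma of \autoref{sec:Positivity}) supplies the lower bound $0$, and the squeeze gives both limits.
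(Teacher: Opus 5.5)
Your proof is correct. The L'H\^opital route you sketch first is the paper's proof. The paper keeps the factor $k_{12}+k_{14}Y(z)$ inside the integral and obtains the quotient $(k_{12}+k_{14}Y(t))/(k_{15}F(t)+k_{16}t+k_{13})\to 0$. Bounding that factor by $M$ beforehand, as you do, is harmless and removes any reliance on continuity of $Y$.

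Your splitting at $z=t/2$ is a genuinely different and more robust argument. It uses only $F\geq 0$ and $\Phi(t)-\Phi(z)\geq\frac{k_{16}}{2}(t^2-z^2)$, so it needs no differentiability of $\Phi$. The paper's L'H\^opital step tacitly uses continuity of $F$ and $Y$, which \eqref{eq:F_Y} does not state, though continuity is implicitly required anyway to apply Lemma~\ref{lem:Agarwal}. Your split also gives the explicit estimate $L(t)\leq \frac{2M}{k_{16}t}+\frac{Mt}{2}e^{-3k_{16}t^2/8}+L_0e^{-\Phi(t)}$, i.e.\ the $O(1/t)$ decay, at no extra cost.

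The paper obtains that rate only later, through the Dawson-function asymptotics of Lemma~\ref{lem:daw_x_estimation}, Corollary~\ref{cor:upper_diff_ineq} and Theorem~\ref{thm:L_t_estimation}. That route gives the sharp leading constant $(k_{12}+k_{14}c_3)/k_{16}$ with an $O(1/t^2)$ remainder, whereas your split loses a factor $2$ in the constant. Splitting at $(1-\varepsilon)t$ and letting $\varepsilon\to 0$ recovers $\limsup_{t\to\infty} tL(t)\leq (k_{12}+k_{14}c_3)/k_{16}$, but not the second-order term.
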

\begin{proof}
Obviously
\begin{equation*}
\lim_{t\to\infty} L_0 e^{-\int_0^t k_{15}F(x)+k_{16}x+k_{13} \, dx}=0.
\end{equation*}
\end{proof}
The other term is
\begin{equation*}
\lim_{t\to\infty}
\frac{\int_0^t \left( k_{12} + k_{14}Y(z) \right) 
e^{\int_0^z k_{15}F(x)+k_{16}x+k_{13} \, dx} \, dz }
{e^{\int_0^t k_{15}F(x)+k_{16}x+k_{13} \, dx}}. 
\end{equation*}
Applying the L’Hospital rule we obtain that the limit is
\begin{equation*}
\lim_{t\to\infty}
\frac{\left( k_{12} + k_{14}Y(t) \right) 
e^{\int_0^t k_{15}F(x)+k_{16}x+k_{13} \, dx}}
{e^{\int_0^t k_{15}F(x)+k_{16}x+k_{13} \, dx} (k_{15}F(t)+k_{16}t+k_{13})} =0.
\end{equation*}
In the same way we obtain
\begin{equation*}
\lim_{t\to\infty} S(t)=0.
\end{equation*}

Now we introduce some notations.
\begin{notation}
\begin{equation}
\mathrm{erfi}(x)=\frac{2}{\sqrt{\pi}}\int_0^x e^{y^2}dy
\quad x\geq 0.
\end{equation}
\end{notation}
\begin{notation}
\begin{equation}
\mathrm{daw}(x)=\frac{\sqrt{\pi}}{2}e^{-x^2}\mathrm{erfi}(x)
\quad x\geq 0.
\end{equation}
\end{notation}
\begin{lemma}[\cite{AtlasOfFunctions}, 42:6:3] For all $x>0$ 
\begin{equation}
\mathrm{daw}(x)=\frac{1}{2x}\sum_{j=0}^{n-1}\frac{(2j-1)!!}{(2x^2)^j}
+\frac{(2n-1)!!x}{(2x^2)^n \exp(x^2)}\sum_{j=0}^{\infty} 
\frac{x^{2j}}{(2j-2n+1)j!}\quad n=1,2,\ldots.
\end{equation}
\end{lemma}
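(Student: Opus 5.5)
The plan is to prove the identity by comparing Laurent coefficients, with no asymptotic analysis needed. By the two Notations, $\mathrm{daw}(x)=e^{-x^2}\int_0^x e^{y^2}\,dy$. Multiplying the claimed formula by $e^{x^2}$, it suffices to show for every $x>0$ and $n\geq 1$ that
\begin{equation*}
\int_0^x e^{y^2}\,dy=\frac{e^{x^2}}{2x}\sum_{j=0}^{n-1}\frac{(2j-1)!!}{2^j x^{2j}}
+\frac{(2n-1)!!}{2^n}\sum_{j=0}^{\infty}\frac{x^{2j-2n+1}}{(2j-2n+1)\,j!},
\end{equation*}
with the convention $(-1)!!=1$. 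Note that $2j-2n+1$ is odd, hence never zero. The left side expands termwise as $\sum_{k\geq 0}x^{2k+1}/((2k+1)k!)$. On the right, I expand $e^{x^2}=\sum_m x^{2m}/m!$. Every series involved converges absolutely for $x>0$, so both sides are absolutely convergent Laurent series in $x$ containing only odd powers $x^{2k+1}$ with $k\geq -n$. Rearrangement is therefore legitimate, and it is enough to match the coefficient of each $x^{2k+1}$.

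Using the convention $1/m!=0$ for negative integers $m$, I will read off the coefficient of $x^{2k+1}$ on the right as
\begin{equation*}
c_n(k)=\sum_{j=0}^{n-1}\frac{(2j-1)!!}{2^{j+1}(k+j+1)!}+\frac{(2n-1)!!}{2^n(2k+1)(k+n)!}.
\end{equation*}
The first part comes from $m=k+j+1$ in the first sum, and the second from the index $j=k+n$ in the second sum. The target is then
\begin{equation*}
c_n(k)=\frac{1}{(2k+1)k!}\ \text{ for } k\geq 0,\qquad c_n(k)=0\ \text{ for } -n\leq k<0.
\end{equation*}
The second case says that all negative powers cancel.

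I will prove this for all $n\geq 0$ simultaneously by induction on $n$.

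For the base case $n=0$, the finite sum is empty. Then $c_0(k)=1/((2k+1)k!)$, which vanishes for $k<0$ by the factorial convention, so the target holds.

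For the step, I compute $c_{n+1}(k)-c_n(k)$ and pull out the common factor $(2n-1)!!/\bigl(2^{n+1}(2k+1)(k+n+1)!\bigr)$, using $(k+n+1)!=(k+n+1)(k+n)!$ and $(2n+1)!!=(2n+1)(2n-1)!!$. The remaining bracket is
\begin{equation*}
(2k+1)+(2n+1)-2(k+n+1)=0.
\end{equation*}
Hence $c_{n+1}(k)=c_n(k)$ for every $k$. When $k+n<0$ the terms in question vanish by the factorial convention, and the range $k\geq -(n+1)$ is consistent with this. This closes the induction and proves the identity.

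The only delicate point I anticipate is bookkeeping rather than any real obstacle. I need to justify the rearrangement of the double series, which holds since everything is absolutely convergent for fixed $x>0$. I also need to handle the boundary indices correctly: the $(-1)!!=1$ convention, negative factorials treated as zero, and checking that the index ranges contributing to a given $x^{2k+1}$ are exactly those written in $c_n(k)$. I would verify this carefully for small $n$, say $n=1$, before writing the general step.
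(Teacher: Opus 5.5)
The paper does not prove this identity. It quotes it from the Atlas of Functions \cite{AtlasOfFunctions} (42:6:3) and uses it only as a black box for the $n=3$ remainder bound in the next lemma. Your proposal is therefore a genuinely different route: a self-contained verification, and it is correct.

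The bookkeeping checks out. The coefficient of $x^{2k+1}$ on the right-hand side is the $c_n(k)$ you wrote. The case $n=0$ is just the Maclaurin series of $\int_0^x e^{y^2}\,dy$. The bracket $(2k+1)+(2n+1)-2(k+n+1)$ does vanish, and for fixed $x>0$ everything is a finite combination of absolutely convergent series.

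One remark should be corrected. At $k=-(n+1)$ the two new terms in $c_{n+1}(k)-c_n(k)$ do not vanish. They cancel each other because $2k+1=-(2n+1)$, while only the old tail term is zero. Your bracket computation already covers this case. With the convention $1/m!=0$ for $m<0$, the relation $1/m!=(m+1)/(m+1)!$ holds for every integer $m$, so the factorization is valid for all $k$. You can simply drop the sentence about vanishing terms.

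Conceptually, your step $c_{n+1}=c_n$ is the coefficientwise form of an integration-by-parts recursion. Take $F_n(x)=\sum_{j\ge 0}\frac{x^{2j-2n+1}}{(2j-2n+1)\,j!}$, which satisfies $F_n'(x)=x^{-2n}e^{x^2}$. Then $F_n(x)=\frac{e^{x^2}}{2x^{2n+1}}+\frac{2n+1}{2}F_{n+1}(x)$. Derived by differentiation, this holds only up to an additive constant, and working directly with Laurent coefficients avoids having to show that constant is zero.

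Compared with the citation, your route takes about half a page. In return it makes the paper independent of the reference, and it makes explicit that the formula is an exact identity for every $n$, not merely an asymptotic expansion.
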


\begin{lemma}\label{lem:daw_x_estimation} For all $x>0$ 
\begin{equation}
\left| \mathrm{daw}(x)- \frac{1}{2x}-\frac{1}{4x^3}\right| \leq 
\frac{9}{4x^5}.
\end{equation}
\end{lemma}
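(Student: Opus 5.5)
We set $R(x):=\mathrm{daw}(x)-\frac{1}{2x}-\frac{1}{4x^3}$ and $q(x):=\frac{9}{4x^5}$, and prove the two one-sided bounds $-q\le R$ and $R\le q$ separately by comparison arguments for a linear first-order ODE, splitting $x>0$ into a small-$x$ and a large-$x$ range.

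\textbf{Step 1: ODE for $R$.} From the definitions, $\mathrm{daw}(x)=e^{-x^2}\int_0^x e^{y^2}\,dy$, so $\mathrm{daw}'=1-2x\,\mathrm{daw}$. Put $p(x)=\frac{1}{2x}+\frac{1}{4x^3}$. Then $2xp=1+\frac{1}{2x^2}$ and $p'=-\frac{1}{2x^2}-\frac{3}{4x^4}$. Hence
\begin{equation*}
R'(x)+2xR(x)=\frac{3}{4x^4},\qquad\text{that is}\qquad \bigl(e^{x^2}R(x)\bigr)'=\frac{3e^{x^2}}{4x^4}>0 .
\end{equation*}
For $q$ we compute
\begin{equation*}
q'+2xq=\frac{9}{2x^4}-\frac{45}{4x^6},
\end{equation*}
which is $\ge \frac{3}{4x^4}$ exactly when $x^2\ge 3$. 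So $\bigl(e^{x^2}(q-R)\bigr)'\ge 0$ on $[\sqrt3,\infty)$. Also $\bigl(e^{x^2}(R+q)\bigr)'=\frac{3e^{x^2}}{4x^4}+e^{x^2}(q'+2xq)>0$ for $x^2\ge 3$.

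\textbf{Step 2: lower bound.} Since $\mathrm{daw}>0$, we have $R\ge -p$. Moreover $p\le q$ iff $2x^4+x^2\le 9$, i.e. $x^2\le \frac{-1+\sqrt{73}}{4}\approx 1.886$. For larger $x$ we use that $e^{x^2}(R+q)$ is nondecreasing on $[\sqrt3,\infty)$. This forces one to bridge the gap $x\in[\sqrt{1.886},\sqrt3]$. I would handle it by a cruder lower bound for $\mathrm{daw}$ there, e.g. $\mathrm{daw}(x)\ge$ the partial series from the cited expansion (Lemma from \cite{AtlasOfFunctions}) with $n=1$, or simply a numerical lower bound of $\mathrm{daw}$ on that compact interval. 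Then $R+q\ge0$ holds at $x=\sqrt3$, and monotonicity propagates it to all $x\ge\sqrt3$.

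\textbf{Step 3: upper bound.} For $x\ge\sqrt3$, $e^{x^2}(q-R)$ is nondecreasing, so it suffices to check $R(\sqrt3)\le q(\sqrt3)=\frac{1}{4\sqrt3}\approx0.144$. Since $p(\sqrt3)\approx0.337$ and $\mathrm{daw}(\sqrt3)\approx0.33$, this holds with room to spare once we have a rigorous upper bound $\mathrm{daw}(\sqrt3)\le 0.48$. Such a bound follows, for instance, from $\mathrm{daw}\le\max\mathrm{daw}<0.55$ combined with a slightly sharper estimate, or from the cited series. For $0<x\le\sqrt3$ we need $\mathrm{daw}(x)\le p(x)+q(x)$. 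I would use the elementary bound $\mathrm{daw}(x)\le x$, which holds since $e^{y^2-x^2}\le1$. Then $x\le \frac{1}{2x}+\frac{1}{4x^3}+\frac{9}{4x^5}$ is equivalent to $4x^6\le 2x^4+x^2+9$, which is true for $x^2\le 1.9$ or so. On the remaining window up to $\sqrt3$ I would instead use $\mathrm{daw}\le 0.55$ against $p+q$, which is large there: at $x=\sqrt3$ it equals $\approx0.48$. This does not quite suffice near $\sqrt3$, so a finer bound is needed.

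\textbf{Main obstacle.} The ODE comparison is routine. The real work is the rigorous bridging estimates for $\mathrm{daw}$ on a compact middle interval, roughly $[1.37,1.74]$, where neither the trivial bounds ($0<\mathrm{daw}\le x$) nor the monotonicity of $e^{x^2}(q\mp R)$ apply directly. My first attempt would be to extract explicit two-sided bounds from the cited expansion with $n=1$ or $n=2$. On such a compact interval the tail series $\sum_j x^{2j}/((2j-2n+1)j!)$ is controlled by $e^{x^2}$ times an explicit factor. If this is too loose, the fallback is to lower the switching point: use the ODE comparison for $q$ with a larger constant on $[\sqrt{1.9},\infty)$, or subdivide the interval and evaluate $\mathrm{daw}$ at finitely many points, using monotonicity of $e^{x^2}\mathrm{daw}(x)=\int_0^x e^{y^2}dy$.
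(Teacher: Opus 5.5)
Your Step 1 identities are correct, and the comparison argument on $[\sqrt3,\infty)$ is valid, but the proposal does not yet prove the lemma. Everything on the middle range is deferred to a ``finer bound'' for $\mathrm{daw}$ that is never supplied. The facts you lean on are also left unproved: $\max\mathrm{daw}<0.55$, $\mathrm{daw}(\sqrt3)\le 0.48$, and a lower bound for $\mathrm{daw}$ on $[1.37,\sqrt3]$. The one explicit estimate you do give is wrong: $4x^6\le 2x^4+x^2+9$ fails already at $x^2=1.6$, where $16.384>15.72$. It holds only for $x^2\lesssim 1.571$, so for the upper bound the uncovered window is about $(1.253,\sqrt3]$, not just a neighbourhood of $\sqrt3$. (Also $\mathrm{daw}(\sqrt3)\approx 0.364$, not $0.33$.)

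The missing observation is that your own derivative computation controls all of $(0,\infty)$, not only $[\sqrt3,\infty)$:
\begin{equation*}
\bigl(e^{x^2}(q-R)\bigr)'=\frac{15\,e^{x^2}(x^2-3)}{4x^6},\qquad
\bigl(e^{x^2}(q+R)\bigr)'=\frac{3\,e^{x^2}(7x^2-15)}{4x^6}.
\end{equation*}
Thus $e^{x^2}(q-R)$ is nonincreasing on $(0,\sqrt3]$ and nondecreasing on $[\sqrt3,\infty)$, so its minimum is at $\sqrt3$. Likewise $e^{x^2}(q+R)$ is minimal at $x_*=\sqrt{15/7}$. No bridging is needed.

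For the upper bound, $R\le q$ on $(0,\infty)$ is equivalent to the single inequality $\mathrm{daw}(\sqrt3)\le p(\sqrt3)+q(\sqrt3)=\frac{5}{6\sqrt3}\approx0.481$. This follows from $\int_0^{\sqrt3}e^{y^2}\,dy\le \frac{1+e}{2}+\frac{e^{1.96}-e}{2}+\frac{e^3-e^{1.96}}{2.8}<8.7<\frac{5e^3}{6\sqrt3}$. Here you use convexity on $[0,1]$, and $e^{y^2}\le \frac{y}{a}e^{y^2}$ for $y\ge a$ on $[1,1.4]$ and $[1.4,\sqrt3]$.

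For the lower bound, $R\ge -q$ is equivalent to $\mathrm{daw}(x_*)\ge p(x_*)-q(x_*)\approx 0.087$. This follows from $\mathrm{daw}(x_*)\ge x_*e^{-x_*^2}\approx 0.17$.

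Completed this way, your route is a self-contained elementary proof that does not need the quoted series. The paper instead uses that series with $n=3$:
\begin{equation*}
\mathrm{daw}(x)=\frac{1}{2x}+\frac{1}{4x^3}+\frac{3}{8x^5}+\frac{15}{8x^5e^{x^2}}\sum_{j\ge0}\frac{x^{2j}}{(2j-5)j!}.
\end{equation*}
It bounds the tail by $\frac{15}{8x^5}$ because $|2j-5|\ge1$, which gives $|R|\le\frac{3+15}{8x^5}=\frac{9}{4x^5}$ for all $x>0$ at once. So once you are willing to invoke the expansion, as in your fallback with $n=1,2$, taking $n=3$ makes the case split and the comparison argument unnecessary.
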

\begin{proof}
Let $n=3$. Then
\begin{equation*}
\mathrm{daw}(x)=\frac{1}{2x}+\frac{1}{4x^3}+\frac{3}{8x^5}+
\frac{15}{8x^5 \exp(x^2)}\sum_{j=0}^{\infty}\frac{x^{2j}}{(2j-5)j!}.
\end{equation*}
Since
\begin{equation*}
\frac{1}{\exp(x^2)}\sum_{j=0}^{\infty}\frac{x^{2j}}{j!|2j-5|}\leq 1
\end{equation*}
we obtain the statement.
\end{proof}

\begin{lemma}[\cite{Emmrich2004}, Lemma 7.3.2]\label{lem:Gronwall}
Assume that $T\in(0,\infty)$, $y:[0,T)\to\mathbf{R}$ is absolutely 
continuous and $a,\,b\in L^1(0,T)$. If the differential inequality 
\begin{equation}
y'(t)\leq a(t)y(t)+b(t),\quad t\in (0,T)
\end{equation}
is satisfied then the estimate
\begin{equation}
y(t)\leq y(0)e^{A(t)}+\int_0^t e^{A(t)-A(s)}b(s)\,ds, \quad t\in[0,T)
\end{equation}
holds, where $A(t)=\int_0^t a(x)\,dx$.
\end{lemma}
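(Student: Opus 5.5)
The plan is the classical integrating-factor argument, carried out in the absolutely continuous setting. Fix $t\in[0,T)$ and work on the compact interval $[0,t]$. Since $a\in L^1(0,T)$, the function $A(x)=\int_0^x a(\xi)\,d\xi$ is absolutely continuous on $[0,t]$, with $A'(x)=a(x)$ for almost every $x$. Because $\exp$ is locally Lipschitz and $A$ is bounded on $[0,t]$, the function $E(x):=e^{-A(x)}$ is also absolutely continuous on $[0,t]$. Moreover $E$ is bounded above and below by positive constants there, and $E'(x)=-a(x)E(x)$ almost everywhere.

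Next I consider the product $w(x):=y(x)E(x)$. The product of two absolutely continuous functions on a compact interval is absolutely continuous, since both factors are bounded. Hence $w$ is absolutely continuous on $[0,t]$, and the product rule holds almost everywhere:
\begin{equation*}
w'(x)=\bigl(y'(x)-a(x)y(x)\bigr)E(x)\le b(x)E(x)\quad\text{for a.e. } x\in(0,t).
\end{equation*}
Here I read the hypothesis $y'\le ay+b$ as holding at every point where $y'$ exists, which is almost everywhere by absolute continuity of $y$. I also use $E>0$, so multiplying the inequality by $E$ preserves its direction. The right-hand side $bE$ is integrable on $(0,t)$, because $b\in L^1$ and $E$ is bounded.

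By the fundamental theorem of calculus for absolutely continuous functions, $w(t)-w(0)=\int_0^t w'(s)\,ds\le\int_0^t b(s)e^{-A(s)}\,ds$. Since $w(0)=y(0)$, multiplying through by $e^{A(t)}>0$ gives
\begin{equation*}
y(t)\le y(0)e^{A(t)}+\int_0^t e^{A(t)-A(s)}b(s)\,ds.
\end{equation*}
As $t\in[0,T)$ was arbitrary, this is the claimed estimate.

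The only delicate step is justifying that $w$ is absolutely continuous with the expected almost-everywhere derivative, so that the fundamental theorem can be integrated up. This needs all of the following:
\begin{itemize}
\item absolute continuity of $y$ on each compact subinterval $[0,t]\subset[0,T)$;
\item absolute continuity of $A$, which comes from $a\in L^1$;
\item closure of absolute continuity under composition with the locally Lipschitz map $\exp$ and under products of bounded factors.
\end{itemize}
Once these standard facts are in place, the rest is the routine computation above.
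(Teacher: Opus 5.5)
The paper does not prove this lemma; it quotes it from Emmrich's text, so there is no in-paper argument to compare yours against. Your integrating-factor proof is correct and complete, and it handles the genuine subtleties properly: reading the hypothesis almost everywhere, the absolute continuity of $e^{-A}$ and of $y\,e^{-A}$ on each compact $[0,t]\subset[0,T)$, and the integrability of $b\,e^{-A}$. It is also the device the paper uses directly (multiplying by $e^{(a/2)t^2+ct}$ and integrating) in the proof of Lemma~\ref{lem:upper_diff_ineq_gen}.
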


\begin{corollary}\label{cor:diffineq_cor}
If $a>0$ and 
\begin{equation*}
y'(t)\leq -ay(t)+b, \quad t\in[0,T);
\end{equation*}
then
\begin{align*}
y(t) &\leq y(0)e^{-at}+\frac{b}{a}\left(1-e^{-at} \right) \\
&= \frac{b}{a}+\left(y(0)-\frac{b}{a} \right)e^{-at}, \quad t\in[0,T).
\end{align*}
That is, if $y(0)\leq b/a$, then $y(t)\leq b/a$, while if $y(0)>b/a$ 
then $y(t)\leq y(0)$ for all $t\geq 0$.
\end{corollary}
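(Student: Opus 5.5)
The plan is to apply \autoref{lem:Gronwall} with the constant coefficients $a(t)\equiv -a$ and $b(t)\equiv b$ on $[0,T)$. These are constant functions on a bounded interval, so they belong to $L^1(0,T)$. Differentiability of $y$ on $[0,T)$ is implicit in writing $y'$, and we assume in addition that $y$ is absolutely continuous there, which holds for instance when $y\in C^1$, as in all the applications above.

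With $A(t)=\int_0^t(-a)\,dx=-at$, the lemma yields
\begin{equation*}
y(t)\leq y(0)e^{-at}+\int_0^t e^{-a(t-\tau)}b\,d\tau .
\end{equation*}
The integral evaluates elementarily to $\frac{b}{a}\left(1-e^{-at}\right)$, which gives the first displayed inequality. Regrouping the terms gives the second form, $\frac{b}{a}+\left(y(0)-\frac{b}{a}\right)e^{-at}$.

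The two final claims follow from the fact that $0<e^{-at}\leq 1$ for $t\geq 0$.
\begin{itemize}
\item If $y(0)\leq b/a$, the coefficient $y(0)-b/a$ is nonpositive, so the correction term is $\leq 0$ and $y(t)\leq b/a$.
\item If $y(0)>b/a$, the coefficient is positive, so $\left(y(0)-b/a\right)e^{-at}\leq y(0)-b/a$ and therefore $y(t)\leq y(0)$.
\end{itemize}
Combining the two cases gives $y(t)\leq\max(b/a,y(0))$, which is the form used in \autoref{thm:G_upper_est}.

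There is no real obstacle here. The only point requiring care is that the inequality is assumed on $[0,T)$, so the conclusion holds for $t\in[0,T)$. The phrase ``for all $t\geq 0$'' should be read as ``for all $t$ in the interval on which the hypothesis holds''; when $T=\infty$ this follows by applying the argument on every finite $[0,T')$.
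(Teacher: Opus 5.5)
Your proof is correct and follows the route the paper intends. The paper gives no separate proof: the corollary is stated as an immediate consequence of Lemma~\ref{lem:Gronwall} with constant coefficients $a(t)\equiv -a$, $b(t)\equiv b$, and your integral evaluation and two-case discussion supply exactly the omitted details. Your clarifications are appropriate: the absolute continuity assumption, reading ``for all $t\geq 0$'' as $t\in[0,T)$, and handling $T=\infty$ through finite subintervals.
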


\begin{corollary}\label{cor:upper_diff_ineq}
If $a>0$ and 
\begin{equation}
y'(t)\leq -(at+c)y(t)+b,\quad t\in[0,T);
\end{equation}
then for large $t$ we have  
\begin{equation}
y(t)\leq \frac{b}{a} \cdot \frac{1}{t} - \frac{bc}{a^2} \cdot \frac{1}{t^2} + \frac{b(c^2 + a)}{a^3} \cdot \frac{1}{t^3} - \frac{bc(c^2 + 3a)}{a^4} \cdot \frac{1}{t^4} + O\left(\frac{1}{t^5}\right).
\end{equation}
\end{corollary}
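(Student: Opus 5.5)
The plan is to apply \autoref{lem:Gronwall} with $a(t)=-(at+c)$ and constant $b$. This gives
\begin{equation*}
y(t)\leq y(0)e^{-(at^2/2+ct)}+b\int_0^t e^{-\frac{a}{2}(t^2-s^2)-c(t-s)}\,ds .
\end{equation*}
The first term decays like a Gaussian, so it is $O(t^{-5})$ (indeed $o(t^{-n})$ for every $n$). All the work is therefore in finding the asymptotics of the integral
\begin{equation*}
J(t)=\int_0^t e^{-\frac{a}{2}(t^2-s^2)-c(t-s)}\,ds .
\end{equation*}

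First, complete the square: $\frac{a}{2}s^2+cs=\frac{a}{2}(s+c/a)^2-\frac{c^2}{2a}$. The constant $e^{\pm c^2/(2a)}$ factors cancel between the $s$ and $t$ parts, so the exponent becomes $-\frac{a}{2}\bigl((t+c/a)^2-(s+c/a)^2\bigr)$. Next, substitute $x=\sqrt{a/2}\,(s+c/a)$ and set $X=\sqrt{a/2}\,(t+c/a)$, $x_0=c/\sqrt{2a}$. This gives
\begin{equation*}
J(t)=\sqrt{\tfrac{2}{a}}\Bigl(\mathrm{daw}(X)-e^{-(X^2-x_0^2)}\,\mathrm{daw}(x_0)\Bigr),
\end{equation*}
using $\int_{x_0}^{X}e^{x^2}dx=\frac{\sqrt\pi}{2}(\mathrm{erfi}(X)-\mathrm{erfi}(x_0))$ and the definition of $\mathrm{daw}$. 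The second term again decays like $e^{-at^2/2}$ and is absorbed into $O(t^{-5})$.

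Then I apply \autoref{lem:daw_x_estimation}. It gives $\mathrm{daw}(X)=\frac{1}{2X}+\frac{1}{4X^3}+O(X^{-5})$, and since $X\sim\sqrt{a/2}\,t$, the error is $O(t^{-5})$. So
\begin{equation*}
bJ(t)=b\sqrt{\tfrac{2}{a}}\Bigl(\tfrac{1}{2X}+\tfrac{1}{4X^3}\Bigr)+O(t^{-5})=\frac{b}{a}\cdot\frac{1}{t+c/a}+\frac{b}{a^2}\cdot\frac{1}{(t+c/a)^3}+O(t^{-5}).
\end{equation*}
Finally, I expand in powers of $1/t$ using $\frac{1}{t+\delta}=\frac1t-\frac{\delta}{t^2}+\frac{\delta^2}{t^3}-\frac{\delta^3}{t^4}+O(t^{-5})$ and $\frac{1}{(t+\delta)^3}=\frac{1}{t^3}-\frac{3\delta}{t^4}+O(t^{-5})$, with $\delta=c/a$.
\begin{itemize}
\item The $t^{-1}$ and $t^{-2}$ coefficients come out as $b/a$ and $-bc/a^2$.
\item The $t^{-3}$ coefficient is $bc^2/a^3+b/a^2=b(c^2+a)/a^3$.
\item The $t^{-4}$ coefficient is $-bc^3/a^4-3bc/a^3=-bc(c^2+3a)/a^4$.
\end{itemize}
These match the stated expansion.

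The main obstacle is bookkeeping rather than ideas. I need to check that the Gaussian-type remainders (the $y(0)$ term and the $\mathrm{daw}(x_0)$ term) are dominated by $t^{-5}$. I also need to check that the $O(X^{-5})$ error from \autoref{lem:daw_x_estimation} translates to $O(t^{-5})$ uniformly for large $t$, which holds because $X\geq\sqrt{a/2}\,t$ when $c\geq 0$; if $c<0$ is allowed, it holds for $t$ large. The phrase ``for large $t$'' in the statement covers exactly this.
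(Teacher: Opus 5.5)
Your proposal is correct and follows the paper's proof essentially step by step: apply \autoref{lem:Gronwall}, complete the square, substitute to reduce the integral to $\mathrm{daw}\bigl((at+c)/\sqrt{2a}\bigr)$, use \autoref{lem:daw_x_estimation} to get $\frac{b}{at+c}+\frac{ab}{(at+c)^3}+O(t^{-5})$, and expand in powers of $1/t$. The only difference is that you explicitly check that the $y(0)$ term and the lower-limit $\mathrm{daw}(x_0)$ term decay like Gaussians and so fit into $O(t^{-5})$; the paper absorbs them silently.
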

\begin{proof}
By \autoref{lem:Gronwall} we have 
\begin{align*}
y(t) &\leq y(0)e^{-at^2/2-ct}+be^{-at^2/2-ct}
\int_0^t e^{as^2/2+cs}\,ds \\
&=y(0)e^{-at^2/2-ct}+b\,e^{-((at+c)/\sqrt{2a})^2}
\int_0^t e^{((as+c)/\sqrt{2a})^2}\, ds\\
&=y(0)e^{-at^2/2-ct}+b\sqrt{\frac{2}{a}}\,e^{-((at+c)/\sqrt{2a})^2}
\int_{\frac{c}{\sqrt{2a}}}^{\frac{at+c}{\sqrt{2a}}} e^{z^2}\,dz\\
&= y(0)e^{-at^2/2-ct}-b\sqrt{\frac{2}{a}}\,e^{-((at+c)/\sqrt{2a})^2}
\int_0^{\frac{c}{\sqrt{2a}}} e^{z^2}\,dz\\
&\quad +b\sqrt{\frac{2}{a}}\,e^{-((at+c)/\sqrt{2a})^2}
\int_0^{\frac{at+c}{\sqrt{2a}}} e^{z^2}\,dz\\
&=y(0)e^{-at^2/2-ct}-b\sqrt{\frac{2}{a}}\,e^{-((at+c)/\sqrt{2a})^2}
\int_0^{\frac{c}{\sqrt{2a}}} e^{z^2}\,dz\\
&\quad +b\sqrt{\frac{2}{a}} \mathrm{daw}\left( 
\frac{at+c}{\sqrt{2a}} \right).
\end{align*}
Applying \autoref{lem:daw_x_estimation} we obtain
\begin{align*}
y(t) &\leq b\sqrt{\frac{2}{a}} 
\left(
\frac{\sqrt{2a}}{2(at+c)}+\frac{(2a)^{3/2}}{4(at+c)^3}
\right) + O\left(\frac{1}{t^5} \right) \\
&=\frac{b}{a} \cdot \frac{1}{t} - \frac{bc}{a^2} \cdot \frac{1}{t^2} + \frac{b(c^2 + a)}{a^3} \cdot \frac{1}{t^3} - \frac{bc(c^2 + 3a)}{a^4} \cdot \frac{1}{t^4} + O\left(\frac{1}{t^5}\right).
\end{align*}
\end{proof}

\begin{lemma}\label{lem:reverse Gronwall}
Assume that $T\in(0,\infty)$, $y:[0,T)\to\mathbf{R}$ is absolutely 
continuous and $a,\,b\in L^1(0,T)$. If the differential inequality 
\begin{equation}
y'(t)\geq a(t)y(t)+b(t),\quad t\in (0,T)
\end{equation}
is satisfied then the estimate
\begin{equation}
y(t)\geq y(0)e^{A(t)}+\int_0^t e^{A(t)-A(s)}b(s)\,ds, \quad t\in[0,T)
\end{equation}
holds, where $A(t)=\int_0^t a(x)\,dx$.
\end{lemma}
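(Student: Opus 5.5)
The natural route is to reverse the sign in \autoref{lem:Gronwall} by passing to the negative of $y$, or, equivalently, to repeat its integrating-factor argument directly. I will do the latter, since it is short. Put $A(t)=\int_0^t a(x)\,dx$. Because $a\in L^1(0,T)$, the function $A$ is absolutely continuous on every compact subinterval of $[0,T)$ with $A'=a$ almost everywhere. Hence $e^{-A(t)}$ is absolutely continuous there as well, with derivative $-a(t)e^{-A(t)}$ a.e. A product of two absolutely continuous functions on a compact interval is absolutely continuous. Therefore $w(t):=e^{-A(t)}y(t)$ is absolutely continuous on $[0,t]$ for every $t<T$, and the product rule holds almost everywhere:
\begin{equation*}
w'(t)=e^{-A(t)}\bigl(y'(t)-a(t)y(t)\bigr)\geq e^{-A(t)}b(t)\quad\text{for a.e. } t\in(0,T).
\end{equation*}

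Next I integrate this inequality from $0$ to $t$. Absolute continuity of $w$ gives the fundamental theorem of calculus, $w(t)-w(0)=\int_0^t w'(s)\,ds$. The right-hand side $e^{-A(s)}b(s)$ is integrable on $[0,t]$, because $e^{-A}$ is continuous, hence bounded, there and $b\in L^1$. So I obtain
\begin{equation*}
e^{-A(t)}y(t)-y(0)\geq \int_0^t e^{-A(s)}b(s)\,ds.
\end{equation*}
Multiplying by $e^{A(t)}>0$ gives exactly the claimed estimate for every $t\in[0,T)$.

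An alternative is to apply \autoref{lem:Gronwall} to $\tilde y=-y$. This function is absolutely continuous and satisfies $\tilde y'\leq a\tilde y+(-b)$ with $-b\in L^1$. The resulting upper bound on $\tilde y$, after multiplying by $-1$, is the statement. I would mention this as a one-line proof and keep the direct argument as justification.

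The main point needing care is the measure-theoretic bookkeeping. The differential inequality holds only almost everywhere, and the integrability hypotheses are stated on $(0,T)$. Both are handled by working on $[0,t]$ with $t<T$, where every function involved is absolutely continuous and every integrand is integrable. I expect no genuine obstacle beyond this.
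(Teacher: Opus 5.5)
Your proposal is correct, and the paper's proof is exactly your one-line alternative: apply Lemma~\ref{lem:Gronwall} to $-y$ with inhomogeneity $-b$. Your main integrating-factor argument with $w=e^{-A}y$ is also valid; it re-derives the cited lemma in the reversed direction, which makes the proof self-contained at the cost of a few lines of absolute-continuity bookkeeping.
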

\begin{proof}
If 
\begin{equation}
y'(t)\geq a(t)y(t)+b(t),\quad t\in (0,T)
\end{equation}
then
\begin{equation}
(-y(t))'\leq a(t)(-y(t))+(-b(t)),\quad t\in (0,T).
\end{equation}
Applying \autoref{lem:Gronwall} we obtain the statement.
\end{proof}

\begin{corollary}\label{cor:reversediffineq_cor}
If $a>0$ and 
\begin{equation*}
y'(t)\geq -ay(t)+b, \quad t\in[0,T);
\end{equation*}
then
\begin{align*}
y(t) &\geq y(0)e^{-at}+\frac{b}{a}\left(1-e^{-at} \right) \\
&= \frac{b}{a}+\left(y(0)-\frac{b}{a} \right)e^{-at}, \quad t\in[0,T).
\end{align*}
That is, if $y(0)\leq b/a$, then $y(t)\geq y(0)$, while if $y(0)>b/a$ 
then $y(t)\geq b/a$ for all $t\in [0,T)$.
\end{corollary}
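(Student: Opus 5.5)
The statement to prove is Corollary~\ref{cor:reversediffineq_cor}. It is the mirror image of Corollary~\ref{cor:diffineq_cor}, so I would derive it directly from Lemma~\ref{lem:reverse Gronwall} with constant coefficients.

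\textbf{Step 1: apply Lemma~\ref{lem:reverse Gronwall}.} Take $a(t)\equiv -a$ and $b(t)\equiv b$. Both are constant, hence in $L^1(0,T)$, and $y$ is assumed absolutely continuous (it is $C^1$ in the applications). Then $A(t)=-at$, and the lemma gives
\begin{equation*}
y(t)\geq y(0)e^{-at}+\int_0^t e^{-a(t-s)}b\,ds .
\end{equation*}

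\textbf{Step 2: evaluate the integral.} The integral equals $\frac{b}{a}\left(1-e^{-at}\right)$, using $a>0$. Regrouping yields the identity
\begin{equation*}
y(0)e^{-at}+\frac{b}{a}\left(1-e^{-at}\right)=\frac{b}{a}+\left(y(0)-\frac{b}{a}\right)e^{-at}.
\end{equation*}
This is the main formula of the corollary.

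\textbf{Step 3: the two cases.} Write the right-hand side as the convex combination
\begin{equation*}
e^{-at}\,y(0)+\left(1-e^{-at}\right)\frac{b}{a}, \qquad e^{-at}\in(0,1] \text{ for } t\ge 0 .
\end{equation*}
A convex combination of two numbers is at least their minimum, so $y(t)\geq \min\bigl(y(0),b/a\bigr)$.
\begin{itemize}
\item If $y(0)\le b/a$, the minimum is $y(0)$, so $y(t)\geq y(0)$.
\item If $y(0)>b/a$, the minimum is $b/a$, so $y(t)\geq b/a$.
\end{itemize}
Both hold for all $t\in[0,T)$. This is the min-analogue of the max statement in Corollary~\ref{cor:diffineq_cor}.

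\textbf{Main obstacle.} There is no real difficulty. The only point needing care is the endpoint $t=0$: Lemma~\ref{lem:reverse Gronwall} assumes the inequality on the open interval $(0,T)$, whereas the corollary states it on $[0,T)$. The stronger hypothesis trivially implies the weaker one, so the lemma applies directly.
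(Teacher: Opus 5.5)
Your proposal is correct and is essentially the paper's own argument, which the paper leaves implicit: the corollary is the reverse Gronwall lemma (itself proved by applying the Gronwall lemma to $-y$) specialized to the constant coefficients $a(t)\equiv -a$, $b(t)\equiv b$, followed by evaluating $\int_0^t e^{-a(t-s)}b\,ds=\frac{b}{a}(1-e^{-at})$ and reading off the two cases, which your convex-combination observation does cleanly. One small correction: constants lie in $L^1(0,T)$ only when $T$ is finite, so if $T=\infty$ is allowed you should apply the lemma on each bounded interval $[0,T')$ and let $T'\to\infty$.
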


\begin{corollary}\label{cor:lower_diff_ineq}
If $a>0$ and 
\begin{equation}
y'(t)\geq -(at+c)y(t)+b,\quad t\in[0,T);
\end{equation}
then for large $t\in[0,T)$ we have  
\begin{equation}
y(t)\geq \frac{b}{a} \cdot \frac{1}{t} - \frac{bc}{a^2} \cdot \frac{1}{t^2} + \frac{b(c^2 + a)}
{a^3} \cdot \frac{1}{t^3} - \frac{bc(c^2 + 3a)}{a^4} \cdot \frac{1}{t^4} + O\left(\frac{1}
{t^5}\right).
\end{equation}
\end{corollary}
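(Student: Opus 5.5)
We mirror the proof of \autoref{cor:upper_diff_ineq}, with \autoref{lem:reverse Gronwall} in place of \autoref{lem:Gronwall}. With $a(t)=-(at+c)$ and constant $b(t)=b$ we have $A(t)=-at^2/2-ct$, and \autoref{lem:reverse Gronwall} gives
\begin{equation*}
y(t)\geq y(0)e^{-at^2/2-ct}+b\,e^{-at^2/2-ct}\int_0^t e^{as^2/2+cs}\,ds,\quad t\in[0,T).
\end{equation*}
This is the same expression as in the upper case, so the inequality simply changes direction.

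Next we repeat the same algebra. We complete the square, $as^2/2+cs=((as+c)/\sqrt{2a})^2-c^2/(2a)$, and substitute $z=(as+c)/\sqrt{2a}$. The integral then splits into $\int_0^{(at+c)/\sqrt{2a}}$ minus $\int_0^{c/\sqrt{2a}}$. This gives
\begin{align*}
y(t)\geq{}& y(0)e^{-at^2/2-ct}-b\sqrt{\tfrac{2}{a}}\,e^{-((at+c)/\sqrt{2a})^2}\int_0^{c/\sqrt{2a}}e^{z^2}\,dz\\
&+b\sqrt{\tfrac{2}{a}}\,\mathrm{daw}\left(\frac{at+c}{\sqrt{2a}}\right).
\end{align*}
The first two terms are of order $e^{-at^2/2}$, so they are $O(t^{-5})$ regardless of sign. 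This is the one place where care is needed in the lower-bound direction. The constant term is negative when $b>0$, but a negative quantity of size $O(t^{-5})$ is absorbed into the error term. Here we read $O(t^{-5})$ as a signed remainder bounded in absolute value by $Ct^{-5}$.

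For the Dawson term we use \autoref{lem:daw_x_estimation} in its two-sided form, $\mathrm{daw}(x)\geq \frac{1}{2x}+\frac{1}{4x^3}-\frac{9}{4x^5}$ with $x=(at+c)/\sqrt{2a}\to\infty$. This yields
\begin{equation*}
y(t)\geq \frac{b}{at+c}+\frac{ab}{(at+c)^3}+O(t^{-5}).
\end{equation*}

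Finally, we expand $(at+c)^{-1}$ and $(at+c)^{-3}$ in powers of $1/t$ for large $t$. Using $(at+c)^{-1}=\frac{1}{at}\sum_k(-c/(at))^k$, we collect the coefficients of $t^{-1},\dots,t^{-4}$:
\begin{itemize}
\item $t^{-1}$: $b/a$;
\item $t^{-2}$: $-bc/a^2$;
\item $t^{-3}$: $bc^2/a^3+ab/a^3=b(c^2+a)/a^3$;
\item $t^{-4}$: $-bc^3/a^4-3abc/a^4=-bc(c^2+3a)/a^4$.
\end{itemize}
These match the stated expansion, and all remaining terms are $O(t^{-5})$.

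The main obstacle is conceptual rather than computational. "For large $t\in[0,T)$" only makes sense when $T=\infty$, or when $T$ is large, as in the intended application to $S$ and $L$. We will therefore state that the asymptotic claim is meant on $[0,\infty)$, and that the bound is uniform with an explicit constant coming from $9/(4x^5)$ and the exponential terms.
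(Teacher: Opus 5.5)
Your proposal is correct and takes the same approach the paper intends: the paper gives no separate proof of this corollary and presents it as the mirror image of \autoref{cor:upper_diff_ineq}, obtained from the reverse Gronwall lemma. Your steps (completing the square, reducing to $\mathrm{daw}$, the two-sided use of \autoref{lem:daw_x_estimation}, and matching the coefficients) are exactly that argument, and you correctly handle the one point that differs from the upper case: the negative term $-b\sqrt{2/a}\,e^{-x^2}\int_0^{c/\sqrt{2a}}e^{z^2}\,dz$ cannot simply be dropped and must instead be absorbed into the signed $O(t^{-5})$ remainder.
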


\begin{theorem}\label{thm:B_t_upper_estimation}
For $t\geq 0$ we have
\begin{equation*}
B(t)\leq B_0+\frac{k_3}{\lambda}
\left(1 -e^{-\lambda t}\right).
\end{equation*} 
\end{theorem}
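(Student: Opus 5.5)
Since $G(t)>0$ and $B(t)>0$ for all $t\geq 0$ (by \autoref{lem:G pos} and the positivity lemma for the system $A(t)=t+A_0$), the loss term $k_4G(t)B(t)$ in \eqref{eq:non_auto_2} is nonnegative. Substituting $A(t)-A_0=t$, equation \eqref{eq:non_auto_2} therefore gives the differential inequality
\begin{equation*}
B'(t)=k_3e^{-\lambda t}-k_4G(t)B(t)\leq k_3e^{-\lambda t}\qquad (t\geq 0).
\end{equation*}
Integrating this from $0$ to $t$ and using $B(0)=B_0$ yields
\begin{equation*}
B(t)\leq B_0+\int_0^t k_3e^{-\lambda z}\,dz=B_0+\frac{k_3}{\lambda}\left(1-e^{-\lambda t}\right),
\end{equation*}
which is the claimed bound.

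This is also a special case of \autoref{lem:Gronwall} with $a\equiv 0$ and $b(t)=k_3e^{-\lambda t}$, valid on every interval $[0,T)$. The hypotheses hold because $B$ is $C^1$ by the existence lemma, hence absolutely continuous, and $b\in L^1(0,T)$.

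As an alternative check, one could start from the explicit formula \eqref{eq:B gen exp sol}. Since $G>0$, we have $e^{k_4\int_0^z G}\leq e^{k_4\int_0^t G}$ for $z\leq t$, and $e^{-k_4\int_0^t G}\leq 1$. Bounding these two factors gives the same estimate.

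There is no real obstacle here. The only point to verify is that the positivity of $G$ and $B$ established earlier is available for the non-autonomous system, which it is.
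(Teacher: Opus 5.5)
Your proof is correct. Your primary argument drops the nonnegative loss term $k_4G(t)B(t)$ and integrates $B'(t)\le k_3e^{-\lambda t}$ directly. The paper instead works from the variation-of-constants formula \eqref{eq:B gen exp sol}: it bounds $e^{-k_4\int_z^t G(x)\,dx}\le 1$ inside the integral and $B_0e^{-k_4\int_0^t G(z)\,dz}\le B_0$, which is exactly your ``alternative check.''

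The difference between the two routes is only bookkeeping. The paper's version needs just $G\ge 0$, while yours also uses $B>0$ to fix the sign of $k_4GB$. Since the paper derives $B>0$ from that same explicit formula, neither route is really cheaper.
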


\begin{proof}
We use \eqref{eq:B gen exp sol}. 
Since 
\begin{align*}
&\left(
\int_0^t k_3 e^{-\lambda z +k_4 \int_0^z G(x)\,dx}
\,dz
\right)e^{-k_4 \int_0^t G(z)\,dz} \\
&\leq \int_0^t k_3 e^{-\lambda z}=\frac{k_3}{\lambda}
\left(1 -e^{-\lambda t}\right), 
\end{align*}
the upper estimation follows.
\end{proof}

\begin{theorem}
For $t\geq 0$ we have
\begin{equation}
I(t)\leq  \frac{k_6}{k_5}\left(1+\frac{k_3}{\lambda B_0} \right)
\frac{G_{\max}^{\gamma}}{c_1+G_{\max}^{\gamma}}
+I_0e^{-k_5 t},
\end{equation} 
where
\begin{equation*}
G_{\max}=\max\left(\frac{k_1}{s}+G_u,G_0 \right),
\end{equation*}
\end{theorem}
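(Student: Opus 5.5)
The plan is to work directly from the explicit representation \eqref{eq:I gen exp sol} of $I(t)$, obtained by the variation of constants formula for the linear equation \eqref{eq:non_auto_3}. The idea is to bound the forcing term uniformly and then integrate the exponential kernel.

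First I will bound the forcing term in \eqref{eq:I gen exp sol}, taking the three factors in turn.
\begin{itemize}
\item By Theorem \ref{thm:B_t_upper_estimation}, $0<B(z)\leq B_0+\frac{k_3}{\lambda}(1-e^{-\lambda z})\leq B_0+\frac{k_3}{\lambda}$. Hence
\begin{equation*}
\frac{B(z)}{B_0}\leq 1+\frac{k_3}{\lambda B_0}.
\end{equation*}
\item By Lemma \ref{lem:G pos} and Theorem \ref{thm:G_upper_est}, $0<G(z)\leq G_{\max}$ for all $z\geq 0$.
\item The function $x\mapsto x^{\gamma}/(c_1+x^{\gamma})$ is nondecreasing on $[0,\infty)$: its derivative $c_1\gamma x^{\gamma-1}/(c_1+x^\gamma)^2$ is nonnegative. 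Therefore
\begin{equation*}
\frac{G(z)^{\gamma}}{c_1+G(z)^{\gamma}}\leq \frac{G_{\max}^{\gamma}}{c_1+G_{\max}^{\gamma}}.
\end{equation*}
\end{itemize}
Combining these, the integrand in \eqref{eq:I gen exp sol} is at most $k_6 M e^{k_5 z}$, where
\begin{equation*}
M:=\left(1+\frac{k_3}{\lambda B_0}\right)\frac{G_{\max}^{\gamma}}{c_1+G_{\max}^{\gamma}}.
\end{equation*}

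Next I will integrate. Since $\int_0^t e^{k_5 z}\,dz=(e^{k_5t}-1)/k_5$, we get
\begin{equation*}
I(t)\leq \frac{k_6 M}{k_5}\left(1-e^{-k_5t}\right)+I_0e^{-k_5t}\leq \frac{k_6M}{k_5}+I_0e^{-k_5t},
\end{equation*}
which is the claimed bound. The same estimate also follows by writing $I'(t)\leq -k_5 I(t)+k_6M$ and applying Corollary \ref{cor:diffineq_cor} in its first form, $y(t)\leq y(0)e^{-at}+\frac{b}{a}(1-e^{-at})$, and then dropping the negative term.

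There is no real obstacle here; the only point needing care is the monotonicity of the Hill-type function. It is what allows $G(z)$ to be replaced by its upper bound $G_{\max}$, and that replacement is legitimate only because $G>0$ by Lemma \ref{lem:G pos}.
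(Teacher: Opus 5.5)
Your proposal is correct and takes the same route as the paper. Both bound the integrand of the variation-of-constants formula \eqref{eq:I gen exp sol} using \autoref{thm:B_t_upper_estimation}, the bound $0<G\le G_{\max}$, and monotonicity of the Hill function, then integrate the exponential kernel; you supply the details the paper leaves implicit, namely the monotonicity check, the role of positivity of $G$, and the explicit factor $1-e^{-k_5t}$.
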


\begin{proof}
We use \eqref{eq:I gen exp sol} and \autoref{thm:B_t_upper_estimation}. Since 
\begin{equation*}
k_6\frac{B(z)}{B_0}
\frac{(G(z))^{\gamma}}{c_1+(G(z))^{\gamma}}\leq 
k_6\left(1+\frac{k_3}{\lambda B_0} \right)
\frac{G_{\max}^{\gamma}}{c_1+G_{\max}^{\gamma}},
\end{equation*}
where
\begin{equation*}
G_{\max}=\max\left(\frac{k_1}{s}+G_u,G_0 \right),
\end{equation*}
the upper estimation follows.
\end{proof}

\begin{theorem}\label{thm:L_t_estimation}
For large $t$ we have 
\begin{equation}
\frac{k_{12}+k_{14}c_2}{k_{16}}\cdot\frac{1}{t}+O\left(\frac{1}{t^2} \right)\leq 
L(t)\leq \frac{k_{12}+k_{14}c_3}{k_{16}}\cdot\frac{1}{t}+O\left(\frac{1}{t^2} \right)
\end{equation}
\end{theorem}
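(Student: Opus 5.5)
The plan is to sandwich the $L$-equation between two linear differential inequalities with time-dependent decay rate $k_{16}t+c$ and constant forcing. Then \autoref{cor:upper_diff_ineq} and \autoref{cor:lower_diff_ineq} give the two sides of the statement.

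For the upper bound, write the equation as $L'(t)=k_{12}+k_{14}Y(t)-(k_{15}F(t)+k_{16}t+k_{13})L(t)$. Since $L(t)>0$ (positivity lemma for the system $A(t)=t+A_0$), $F(t)\geq c_4\geq 0$ and $Y(t)\leq c_3$ by \eqref{eq:F_Y}, we get
\begin{equation*}
L'(t)\leq -(k_{16}t+k_{13}+k_{15}c_4)L(t)+(k_{12}+k_{14}c_3).
\end{equation*}
Next we apply \autoref{cor:upper_diff_ineq} with $a=k_{16}$, $c=k_{13}+k_{15}c_4$ and $b=k_{12}+k_{14}c_3$. This yields $L(t)\leq \frac{b}{a}\cdot\frac1t+O(1/t^2)$, because the remaining terms of the expansion are $O(1/t^2)$. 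One point needs checking: the term $y(0)e^{-at^2/2-ct}$ and the term with $\int_0^{c/\sqrt{2a}}$ in that corollary's proof are Gaussian-small. They are therefore absorbed into $O(1/t^2)$, and the second one is negative anyway.

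For the lower bound we argue symmetrically, using $Y(t)\geq c_2$ and $F(t)\leq c_5$ together with $L>0$:
\begin{equation*}
L'(t)\geq -(k_{16}t+k_{13}+k_{15}c_5)L(t)+(k_{12}+k_{14}c_2).
\end{equation*}
Now \autoref{cor:lower_diff_ineq} with $a=k_{16}$, $c=k_{13}+k_{15}c_5$ and $b=k_{12}+k_{14}c_2$ gives $L(t)\geq \frac{b}{a}\cdot\frac1t+O(1/t^2)$.

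The main obstacle is making sure the lower-bound corollary is actually justified. It is only stated, and it relies on \autoref{lem:reverse Gronwall} together with the Dawson asymptotics of \autoref{lem:daw_x_estimation}. I would redo the computation from the proof of \autoref{cor:upper_diff_ineq} with the inequality reversed. After the reversal, the positive term $y(0)e^{-at^2/2-ct}$ can simply be dropped. The subtracted term $b\sqrt{2/a}\,e^{-((at+c)/\sqrt{2a})^2}\int_0^{c/\sqrt{2a}}e^{z^2}dz$ is $O(e^{-at^2/2})$, so it too lies in $O(1/t^2)$. Finally, \autoref{lem:daw_x_estimation} gives $\mathrm{daw}(x)\geq \frac1{2x}-O(x^{-3})$, and substituting $x=(at+c)/\sqrt{2a}$ produces the leading term $\frac{b}{a t}$ with an error of $O(1/t^2)$, as claimed.
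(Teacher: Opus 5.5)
Your proposal is correct and follows the paper's proof essentially verbatim. It uses the same two differential inequalities with $a=k_{16}$, $c=k_{13}+k_{15}c_4$ (resp.\ $c_{13}$ replaced by $k_{13}+k_{15}c_5$), $b=k_{12}+k_{14}c_3$ (resp.\ $k_{12}+k_{14}c_2$), fed into the upper and lower corollaries; your explicit use of $L(t)>0$ when replacing $F(t)$ by its bounds, and your sketch of the unproved lower-bound corollary via the reverse Gronwall lemma and the Dawson estimate, fill in steps the paper leaves implicit.
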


\begin{proof}
Since $c_2\leq Y(t)\leq c_3$ and $c_4\leq F(t)\leq c_5$ we can write 
\begin{align*}
L'(t) &\leq k_{12}-k_{13}L(t)+k_{14}c_3-k_{15}c_4 L(t) -k_{16}t L(t) \\
&= -(k_{16}t+k_{13}+k_{15}c_4)L(t)+(k_{12}+k_{14}c_3),
\end{align*}
and
\begin{align*}
L'(t) &\geq k_{12}-k_{13}L(t)+k_{14}c_2-k_{15}c_5 L(t)-k_{16}t L(t) \\
&= -(k_{16}t+k_{13}+k_{15}c_5)L(t)+(k_{12}+k_{14}c_2).
\end{align*}
Denote
\begin{align*}
a &:=k_{16}, \\
c &:=k_{13}+k_{15}c_4, \\
b &:=k_{12}+k_{14}c_3
\end{align*}
Then by \autoref{cor:upper_diff_ineq} we get 
\begin{equation}
L(t)\leq \frac{k_{12}+k_{14}c_3}{k_{16}}\cdot\frac{1}{t}+O\left(\frac{1}{t^2} \right).
\end{equation}
Similarly, denote
\begin{align*}
a &:=k_{16}, \\
c &:=k_{13}+k_{15}c_5, \\
b &:=k_{12}+k_{14}c_2.
\end{align*}
Then by \autoref{cor:lower_diff_ineq} we get
\begin{equation}
L(t)\geq \frac{k_{12}+k_{14}c_2}{k_{16}}\cdot\frac{1}{t}+O\left(\frac{1}{t^2} \right).
\end{equation}
\end{proof}

\begin{lemma}\label{lem:upper_diff_ineq_gen}
If $a>0$ $t_0\geq 0$, and 
\begin{equation*}
y'(t)\leq -(at+c)y(t)+(bt+d),  t\in[t_0,T);
\end{equation*}
then for large $t$ we have
\begin{equation*}
y(t)\leq C,
\end{equation*}
where $C$ is a constant. 
\end{lemma}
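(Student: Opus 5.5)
Assume $b\geq 0$ and $d\geq 0$, which is the situation in the intended applications; otherwise replace $b,d$ by $|b|,|d|$. The plan is a comparison argument rather than explicit integration.

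The key observation is that for large $t$ the decay coefficient $at+c$ dominates the forcing $bt+d$. Choose
\begin{equation*}
C > \frac{b}{a}.
\end{equation*}
Then $(at+c)C-(bt+d) = (aC-b)t + (cC-d)$ is positive as soon as $t>t_1:=\max\bigl(t_0,(d-cC)/(aC-b)\bigr)$ and $at+c>0$. Thus for $t\geq t_1$, whenever $y(t)\geq C$ we have $y'(t)<0$. Hence the level $C$ cannot be crossed upward after time $t_1$.

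Concretely, I set $C^*:=\max(C,y(t_1))$ and claim $y(t)\leq C^*$ for all $t\in[t_1,T)$. Suppose not. Take $\tau:=\sup\{t\in[t_1,t_2]: y(t)\leq C^*\}$ for some $t_2$ with $y(t_2)>C^*$. By continuity $y(\tau)=C^*\geq C$ and $y>C^*$ on $(\tau,t_2]$, so $y'(\tau)\geq 0$, mirroring \autoref{lem:u_prime}. But the differential inequality gives
\begin{equation*}
y'(\tau)\leq -(a\tau+c)C^*+(b\tau+d)<0,
\end{equation*}
a contradiction. This needs $y$ to be $C^1$, as it is in the applications; for merely absolutely continuous $y$ one integrates the inequality on $(\tau,\tau+\varepsilon)$ instead.

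Alternatively, I would apply \autoref{lem:Gronwall} on $[t_0,T)$ and write
\begin{equation*}
y(t)\leq y(t_0)e^{-\Phi(t)}+\int_{t_0}^t e^{-(\Phi(t)-\Phi(s))}(bs+d)\,ds,\qquad \Phi(t)=\int_{t_0}^t (ax+c)\,dx.
\end{equation*}
I would then bound the integral via Dawson-function asymptotics as in \autoref{cor:upper_diff_ineq}, giving $y(t)\leq b/a+O(1/t)$, which is even sharper. The $bs$ term splits as $\frac{b}{a}(as+c)-\frac{bc}{a}$, and the first piece integrates exactly to $\frac{b}{a}\bigl(1-e^{-\Phi(t)}\bigr)$. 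This route yields the explicit constant $C=b/a+\varepsilon$ for large $t$.

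The main obstacle is just the bookkeeping of signs of $c,d$ (which may be negative) and the choice of threshold time. The comparison route avoids the asymptotic expansions entirely, so I would present it as the main proof.
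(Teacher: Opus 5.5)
Your proof is correct, but your main route differs from the paper's; your ``alternative'' route is essentially the paper's proof. The paper multiplies by the integrating factor $e^{(a/2)t^2+ct}$, integrates over $[t_0,t]$, splits $bs+d=\frac{b}{a}(as+c)+\bigl(d-\frac{bc}{a}\bigr)$ so that the first piece integrates exactly, and then bounds $\int_{t_0}^t e^{(a/2)s^2+cs}\,ds$ asymptotically. You instead use a barrier argument in the spirit of Lemma~\ref{lem:u_prime}: once $(at+c)C>bt+d$, the region $\{y\le\max(C,y(t_1))\}$ is forward invariant. (Take $t_1\ge t_0$ strictly larger than both $-c/a$ and $(d-cC)/(aC-b)$. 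With your definition, if $\tau=t_1=(d-cC)/(aC-b)$ and $C^*=C$, you only get $y'(\tau)\le 0$, which is no contradiction.)

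The integrating-factor route yields the sharper bound $y(t)\le b/a+O(1/t)$, but it rests on a Gaussian-integral estimate. The estimate the paper displays, $\int_{t_0}^t e^{(a/2)s^2+cs}\,ds\le K+e^{(a/2)t^2+ct}/(at+c)$, is slightly too small. By Lemma~\ref{lem:daw_x_estimation} the integral exceeds $e^{(a/2)t^2+ct}/(at+c)$ by about $a\,e^{(a/2)t^2+ct}/(at+c)^3$, so a constant factor must be inserted; this is harmless for boundedness.

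Your argument has several advantages:
\begin{itemize}
\item it needs no special functions;
\item it is indifferent to the signs of $b,c,d$, so your reduction to $b,d\ge 0$ is unnecessary;
\item the mirror-image argument gives Lemma~\ref{lem:lower_diff_ineq_gen};
\item if you take $C>0$, it works verbatim for $y'\le -p(t)y+q(t)$ with $p(t)\ge at+c$ and $q(t)\le bt+d$, with no sign information on $y$.
\end{itemize}
The last point is exactly the situation for $\ell_2$ in the proof of the asymptotic formula for $L(t)$. There the coefficient is $-(k_{16}t+k_{15}F(t)+k_{13}-2/t)$ rather than $-(at+c)$, so the constant-coefficient lemma does not literally apply.

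The only thing you give up is the sharp constant, and even that is recoverable. While $y\ge C>b/a$ you have $y'\le-\bigl[(aC-b)t+(cC-d)\bigr]\to-\infty$, so $y$ eventually drops below every $C>b/a$ and stays there by your barrier argument. Hence $\limsup_{t\to\infty}y(t)\le b/a$.
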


\begin{proof}
We give a direct proof. Multiplying by $e^{(a/2)t^2+ct}$ the inequality 
\begin{equation*}
y'(t)+(at+c)y(t)\leq bt+d
\end{equation*}
we obtain
\begin{equation*}
\left(y(t)e^{(a/2)t^2+ct}  \right)'\leq (bt+d)e^{(a/2)t^2+ct}.
\end{equation*}
After integrating both sides on the interval $[t_0,t]$ and rearranging we get
\begin{equation*}
y(t) \le y(t_0) e^{-(a/2)(t^2 - t_0^2) - c(t - t_0)} + e^{-(a/2)t^2 - ct} \int_{t_0}^t (bs + d) 
e^{(a/2)s^2 + cs} \, ds.
\end{equation*}
Here 
\begin{equation*}
\int_{t_0}^t (bs + d) e^{(a/2)s^2 + cs} \, ds = \frac{b}{a} \int_{t_0}^t (as + c) e^{(a/2)s^2 + 
cs} \, ds + \left(d - \frac{bc}{a}\right) \int_{t_0}^t e^{(a/2)s^2 + cs} \, ds. 
\end{equation*}
The first integral is
\begin{equation*}
\int_{t_0}^t (as + c) e^{(a/2)s^2 + cs} \, ds = e^{(a/2)t^2 + ct} - e^{(a/2)t_0^2 + ct_0}
\end{equation*}
Using direct calculation (integration by parts) 
or \autoref{lem:daw_x_estimation}, for large $t$ values we have 
\begin{equation*}
\int_{t_0}^t e^{(a/2)s^2 + cs} \, ds \le K + \frac{e^{\frac{a}{2}t^2 + ct}}{at+c}.
\end{equation*}
Using these estimation we obtain 
\begin{equation*}
    y(t)\leq C.
\end{equation*}

In a similar way we can derive
\begin{lemma}\label{lem:lower_diff_ineq_gen}
If $a>0$ $t_0\geq 0$, and 
\begin{equation*}
y'(t)\geq -(at+c)y(t)+(bt+d),  t\in[t_0,T);
\end{equation*}
then for large $t$ we have
\begin{equation*}
y(t)\geq C,
\end{equation*}
where $C$ is a constant. 
\end{lemma}
\end{proof}

\begin{theorem}\label{thm:L asymp}
In addition to the previous assumptions, given that $Y'(t), F'(t)$ are bounded, 
we have 
\begin{equation*} 
    L(t)=\frac{k_{12}+k_{14}Y(t)}{k_{16}}\cdot\frac{1}{t}+O\left( \frac{1}{t^2} \right).
\end{equation*}
\end{theorem}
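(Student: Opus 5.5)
The plan is to work with the deviation from the expected leading term. Set
\[
q(t):=\frac{k_{12}+k_{14}Y(t)}{k_{16}},\qquad w(t):=t\,L(t)-q(t),
\]
and prove that $w$ is bounded for large $t$. Then $L(t)=q(t)/t+w(t)/t=q(t)/t+O(1/t^2)$, which is the claim.

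First I would derive a differential equation for $w$. From \eqref{eq:non_auto_5} with $A(t)-A_0=t$,
\[
(tL)'=L+t\bigl(k_{12}+k_{14}Y(t)\bigr)-(k_{13}+k_{15}F(t))\,tL-k_{16}t\,(tL).
\]
Since $k_{12}+k_{14}Y(t)=k_{16}q(t)$, the two $O(t^2)$ pieces combine into $-k_{16}t\,(tL-q)=-k_{16}t\,w$. Substituting $tL=w+q$ in the remaining terms gives
\[
w'(t)=-\bigl(k_{16}t+k_{13}+k_{15}F(t)\bigr)w(t)+L(t)-\bigl(k_{13}+k_{15}F(t)\bigr)q(t)-q'(t).
\]

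Next I would bound the forcing term. By \autoref{thm:L_t_estimation}, $L(t)=O(1/t)$, so $L$ is bounded for large $t$. By \eqref{eq:F_Y}, $q$ and $F$ are bounded. By the extra hypothesis, $q'=k_{14}Y'/k_{16}$ is bounded; this is exactly where the boundedness of $Y'$ is needed. Hence, for $t\ge t_0$, the forcing term is bounded by a constant $M$ in absolute value. The coefficient of $w$ lies between $-(k_{16}t+k_{13}+k_{15}c_5)$ and $-(k_{16}t+k_{13}+k_{15}c_4)$, so it is also of the form $-(at+c)$ up to bounded errors.

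The main obstacle is that the coefficient of $w$ is not exactly $-(at+c)$, because $F(t)$ varies, so \autoref{lem:upper_diff_ineq_gen} and \autoref{lem:lower_diff_ineq_gen} do not apply verbatim. I would handle this by a sign-splitting comparison:
\begin{itemize}
\item On intervals where $w>0$, replacing the coefficient by $-(k_{16}t+k_{13}+k_{15}c_4)$ gives $w'\le-(k_{16}t+c)w+M$ with $c=k_{13}+k_{15}c_4$.
\item On intervals where $w<0$, the analogous replacement gives $w'\ge-(k_{16}t+c')w-M$ with $c'=k_{13}+k_{15}c_5$.
\end{itemize}
On each excursion interval I would apply \autoref{lem:upper_diff_ineq_gen} (respectively \autoref{lem:lower_diff_ineq_gen}) with $b=0$, restarting from the left endpoint where $w=0$ or from $t_0$, and obtain bounds uniform in the starting point.

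A cleaner alternative that avoids excursions is to apply \autoref{lem:Gronwall} directly with $a(t)=-(k_{16}t+k_{13}+k_{15}F(t))$. Since $-A(t)+A(s)\le -k_{16}(t^2-s^2)/2$, this yields
\[
|w(t)|\le |w(t_0)|\,e^{-k_{16}(t^2-t_0^2)/2}+M\int_{t_0}^t e^{-k_{16}(t^2-s^2)/2}\,ds,
\]
applied to $w$ and to $-w$. The last integral equals $\sqrt{2/k_{16}}\,\mathrm{daw}(t\sqrt{k_{16}/2})$ minus a nonnegative term, which is $O(1/t)$ by \autoref{lem:daw_x_estimation}. So $w(t)$ is in fact $O(1/t)$, and in particular bounded, which finishes the proof. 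I would present this Gronwall route, since it handles the variable $F$ without case analysis.
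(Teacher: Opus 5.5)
\textbf{Verdict.} Your Gronwall route is correct, and it differs from the paper's route. There is one slip in how you state the target, noted at the end.

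\textbf{The paper's route.} The paper makes the ansatz $L=q(t)/t+\ell(t)/t^2$, so its unknown is $\ell=t\,w$ in your notation. The equation for $\ell$ then has forcing of order $t$, namely $-(k_{13}+k_{15}F)q\,t-q'\,t$. To handle this, the paper first subtracts $\ell_1=-(k_{15}F+k_{13})q/k_{16}$, which cancels the first of these terms. It then applies \autoref{lem:upper_diff_ineq_gen} and \autoref{lem:lower_diff_ineq_gen}, which allow forcing of the form $bt+d$, to get $\ell_2=O(1)$. Differentiating $\ell_1$ is why the paper needs $F'$ bounded. Also, those lemmas are stated for a constant $c$, while the coefficient of $\ell_2$ is $-(k_{16}t+k_{15}F(t)+k_{13}-2/t)$, so that step is somewhat informal in the paper.

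\textbf{What your route buys.} You work one power of $t$ lower.
\begin{itemize}
\item Your forcing $L-(k_{13}+k_{15}F)q-q'$ is bounded outright.
\item You never differentiate $F$, so the hypothesis on $F'$ is not needed in your argument.
\item \autoref{lem:Gronwall} with the variable coefficient $a(t)$, together with $A(t)-A(s)\le -k_{16}(t^2-s^2)/2$, handles the varying $F$ rigorously and gives an explicit constant.
\item You don't even need the Dawson asymptotics. Since $t^2-s^2\ge t(t-s)$ for $0\le s\le t$, you get $\int_{t_0}^t e^{-k_{16}(t^2-s^2)/2}\,ds\le 2/(k_{16}t)$ directly.
\end{itemize}

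\textbf{Correction to your framing.} Boundedness of $w$ is \emph{not} enough. Since $L=q/t+w/t$, a bounded $w$ only gives $L=q/t+O(1/t)$. What you need is $w=O(1/t)$.

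Your Gronwall estimate does give exactly $|w(t)|\le |w(t_0)|e^{-k_{16}(t^2-t_0^2)/2}+2M/(k_{16}t)$, so the proof closes. However, two sentences should be rewritten around the $O(1/t)$ statement:
\begin{itemize}
\item the opening ``prove that $w$ is bounded \ldots\ Then $L=q/t+O(1/t^2)$'';
\item the closing ``in particular bounded, which finishes the proof''.
\end{itemize}
For the same reason, your sign-splitting alternative via \autoref{lem:upper_diff_ineq_gen} as stated would fall short. That lemma only asserts $y\le C$, so you would have to extract the $O(1/t)$ rate from its proof rather than from its statement.
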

\begin{proof}
Motivating by \autoref{thm:L_t_estimation} we try to find $L(t)$ in the following 
form 
\begin{equation}
L(t)=\frac{k_{12}+k_{14}Y(t)}{k_{16}}\cdot\frac{1}{t}+\frac{\ell(t)}{t^2},
\end{equation}
where 
\begin{equation*}
\ell(t)=O(1).
\end{equation*}
Then
\begin{equation}\label{eq:L_diff_lhs}
L'(t)=\frac{k_{14}Y'(t)}{k_{16}t}-\frac{k_{12}+k_{14}Y(t)-k_{16}\ell'(t)}{k_{16}t^2}
-\frac{2\ell(t)}{t^3},
\end{equation}
and
\begin{align}\label{eq:L_diff_rhs}
k_{12}+k_{14}Y(t)-(k_{15}F(t)+k_{16}t+k_{13})L(t) =
&-\frac{(k_{15}F(t)+k_{13})(k_{12}+k_{14}Y(t))+k_{16}^2\ell(t)}{k_{16}t} 
\nonumber \\
&-\frac{(k_{15}F(t)+k_{13})\ell(t)}{t^2}.
\end{align}
Let
\begin{equation*}
\ell(t):=\ell_1(t)+\ell_2(t),
\end{equation*}
and
\begin{equation}
\ell_1(t):=-\frac{(k_{15}F(t)+k_{13})(k_{12}+k_{14}Y(t))}{k_{16}^2}. 
\end{equation}
Then we obtain
\begin{align}\label{eq:L prime long}
L'(t) &=
\frac{1}{t}\frac{k_{14}Y'(t)}{k_{16}} \nonumber
\\
&+\frac{1}{t^2}\left[ \ell_2'(t)
-\frac{
k_{15}F'(t)(k_{12}+k_{14}Y(t))
+(k_{15}F(t)+k_{13})k_{14}Y'(t)
}{k_{16}^2}
-\frac{k_{12}+k_{14}Y(t)}{k_{16}}
\right] \nonumber
\\
&+\frac{2}{t^3}\left[
\frac{(k_{15}F(t)+k_{13})(k_{12}+k_{14}Y(t))}{k_{16}^2}-\ell_2(t)
\right],
\end{align}
and
\begin{align}\label{eq:L prime rhs}
&k_{12}+k_{14}Y(t)-(k_{15}F(t)+k_{16}t+k_{13})L(t) = \nonumber\\
&\left( -k_{16}\ell_2(t) \right) \frac{1}{t} + \left( \frac{(k_{15}F(t)+k_{13})^2 (k_{12}+k_{14}Y(t))}{k_{16}^2} - (k_{15}F(t)+k_{13})\ell_2(t) \right) \frac{1}{t^2}.
\end{align}

From \eqref{eq:L prime long} and \eqref{eq:L prime rhs} it follows 
\begin{align} 
\ell_2'(t) &= \frac{k_{12}+k_{14}Y(t)}{k_{16}} 
\left[ 1 + \frac{k_{15}F'(t) + (k_{15}F(t)+k_{13})^2}{k_{16}} 
- \frac{2(k_{15}F(t)+k_{13})}{k_{16}t} \right] \nonumber \\ 
& + \frac{k_{14}Y'(t)}{k_{16}} \left[ -t + \frac{k_{15}F(t)+k_{13}}{k_{16}} \right] 
 + \left( -k_{16}t - k_{15}F(t) - k_{13} + \frac{2}{t} \right) \ell_2(t) 
\end{align}
Since $Y(t),F(t),Y'(t),F'(t)$ are bounded we can apply \autoref{lem:upper_diff_ineq_gen} 
and \autoref{lem:lower_diff_ineq_gen} to obtain that $\ell_2(t)=O(1)$. With $\ell_1(t)=O(1)$ 
it follows 
\begin{equation*}
\ell(t)=O(1). 
\end{equation*}
The theorem is proved. 
\end{proof}

After making minor changes we have
\begin{theorem}\label{thm:S_t_estimation}
For large $t$ we have 
\begin{equation*}
\frac{k_{7}+k_{9}c_2}{k_{11}}\cdot\frac{1}{t}+O\left(\frac{1}{t^2} \right)\leq 
S(t)\leq \frac{k_{7}+k_{9}c_3}{k_{11}}\cdot\frac{1}{t}+O\left(\frac{1}{t^2} \right).
\end{equation*}
\end{theorem}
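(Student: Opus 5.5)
The plan is to copy the proof of \autoref{thm:L_t_estimation}, replacing the $L$-equation \eqref{eq:non_auto_5} by the $S$-equation \eqref{eq:non_auto_4}. With $A(t)-A_0=t$, the $S$-equation reads
\begin{equation*}
S'(t)=k_7+k_9Y(t)-\left(k_8+k_{10}F(t)+k_{11}t\right)S(t).
\end{equation*}
By \eqref{eq:S gen exp sol}, $S(t)>0$. Together with the bounds $c_2\leq Y(t)\leq c_3$ and $c_4\leq F(t)\leq c_5$ from \eqref{eq:F_Y}, this gives the two differential inequalities
\begin{align*}
S'(t) &\leq -(k_{11}t+k_8+k_{10}c_4)S(t)+(k_7+k_9c_3),\\
S'(t) &\geq -(k_{11}t+k_8+k_{10}c_5)S(t)+(k_7+k_9c_2).
\end{align*}
Positivity of $S$ is what lets us replace $-k_{10}F(t)S(t)$ by $-k_{10}c_4S(t)$ in the first inequality and by $-k_{10}c_5S(t)$ in the second.

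For the upper bound, set $a:=k_{11}>0$, $c:=k_8+k_{10}c_4$ and $b:=k_7+k_9c_3$, and apply \autoref{cor:upper_diff_ineq}. Its leading term is $\frac{b}{a}\cdot\frac1t$, and every remaining term of the expansion is $O(1/t^2)$. The exponentially small contributions $y(0)e^{-at^2/2-ct}$ and $b\sqrt{2/a}\,e^{-((at+c)/\sqrt{2a})^2}\int_0^{c/\sqrt{2a}}e^{z^2}\,dz$ are also $O(1/t^2)$. This gives
\begin{equation*}
S(t)\leq\frac{k_7+k_9c_3}{k_{11}}\cdot\frac1t+O\left(\frac{1}{t^2}\right).
\end{equation*}

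For the lower bound, set $a:=k_{11}$, $c:=k_8+k_{10}c_5$ and $b:=k_7+k_9c_2$, and apply \autoref{cor:lower_diff_ineq}. This yields the matching lower bound
\begin{equation*}
S(t)\geq\frac{k_7+k_9c_2}{k_{11}}\cdot\frac1t+O\left(\frac{1}{t^2}\right).
\end{equation*}

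The step needing the most care is justifying \autoref{cor:lower_diff_ineq}. It follows from \autoref{lem:reverse Gronwall} by the same computation as in \autoref{cor:upper_diff_ineq}. The new point is that the remainder in \autoref{lem:daw_x_estimation} is bounded in absolute value by $\frac{9}{4x^5}$, so it can be used in both directions; the exponentially decaying terms can be absorbed into $O(1/t^2)$ regardless of their sign.

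Since $b\geq k_7>0$ in both cases, the bounds are nontrivial. The conclusion $S(t)=\Theta(1/t)$ is consistent with \autoref{lem:lim L S}.
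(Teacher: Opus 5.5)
Your proposal is correct and follows the paper's own route: the paper obtains this theorem by ``minor changes'' to the proof of \autoref{thm:L_t_estimation}. That means the same two comparison inequalities for $S$, with $a=k_{11}$, $c=k_8+k_{10}c_4$ (resp.\ $k_8+k_{10}c_5$) and $b=k_7+k_9c_3$ (resp.\ $k_7+k_9c_2$), fed into \autoref{cor:upper_diff_ineq} and \autoref{cor:lower_diff_ineq}. Your remark that the two-sided remainder bound of \autoref{lem:daw_x_estimation} together with \autoref{lem:reverse Gronwall} justifies the lower corollary, which the paper states without proof, is a useful extra detail.
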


\begin{lemma}
In addition to the previous assumptions, given that $Y'(t), F'(t)$ are bounded, 
we have 
\begin{equation*} 
    S(t)=\frac{k_{7}+k_{9}Y(t)}{k_{11}}\cdot\frac{1}{t}+O\left( \frac{1}{t^2} \right).
\end{equation*}
\end{lemma}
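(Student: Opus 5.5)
The plan is to mirror the argument for \autoref{thm:L asymp} with the substitutions $k_{12}\to k_7$, $k_{14}\to k_9$, $k_{13}\to k_8$, $k_{15}\to k_{10}$, $k_{16}\to k_{11}$. The $S$-equation \eqref{eq:non_auto_4} has exactly the same linear structure as \eqref{eq:non_auto_5}:
\begin{equation*}
S'(t)=k_7+k_9Y(t)-(k_{10}F(t)+k_{11}t+k_8)S(t).
\end{equation*}
I would nevertheless organize it slightly more directly. Write
\begin{equation*}
S(t)=\frac{k_7+k_9Y(t)}{k_{11}}\cdot\frac1t+\frac{\sigma(t)}{t^2}
\end{equation*}
for $t\ge t_0>0$. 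The goal is to show $\sigma(t)=O(1)$. Since $S$ is $C^1$ and $Y$ is differentiable with bounded derivative, $\sigma$ is absolutely continuous on $[t_0,\infty)$.

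First step: substitute the ansatz into the differential equation and isolate $\sigma'$. On the left, $S'(t)$ produces the terms $\frac{k_9Y'(t)}{k_{11}t}$, $-\frac{k_7+k_9Y(t)}{k_{11}t^2}$, $\frac{\sigma'(t)}{t^2}$ and $-\frac{2\sigma(t)}{t^3}$. On the right, the $k_{11}t\cdot\frac{k_7+k_9Y}{k_{11}t}$ term cancels the constant $k_7+k_9Y(t)$ exactly. What remains is
\begin{equation*}
-\frac{(k_{10}F+k_8)(k_7+k_9Y)}{k_{11}t}-\frac{k_{11}\sigma}{t}-\frac{(k_{10}F+k_8)\sigma}{t^2}.
\end{equation*}
Multiplying through by $t^2$ gives a linear equation
\begin{equation*}
\sigma'(t)=-\Bigl(k_{11}t+k_{10}F(t)+k_8-\tfrac{2}{t}\Bigr)\sigma(t)+r(t),
\end{equation*}
where $|r(t)|\le bt+d$ for constants $b,d$. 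This bound holds because $Y,F,Y'$ are bounded by \eqref{eq:F_Y} and the hypothesis. Unlike the proof of \autoref{thm:L asymp}, I would not split $\sigma=\sigma_1+\sigma_2$; absorbing everything into $r$ suffices.

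Second step: turn this into the two inequalities required by \autoref{lem:upper_diff_ineq_gen} and \autoref{lem:lower_diff_ineq_gen}. The coefficient depends on $F(t)$ and $2/t$, so I handle the sign of $\sigma$ separately.
\begin{itemize}
\item Where $\sigma(t)\ge0$, the coefficient is at least $k_{11}t+c$ with $c:=k_{10}c_4+k_8-2/t_0$. This gives $\sigma'\le-(k_{11}t+c)\sigma+(bt+d)$.
\item Where $\sigma(t)<0$, the coefficient is at most $k_{11}t+k_{10}c_5+k_8$. This gives a lower differential inequality of the same form with $-(bt+d)$ on the right.
\end{itemize}

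The main obstacle is this sign-dependence: the lemmas are stated for a globally valid inequality. I would handle it with a comparison on intervals. On any maximal interval where $\sigma>0$, either the interval starts at $t_0$, or $\sigma$ vanishes at its left endpoint. Applying \autoref{lem:upper_diff_ineq_gen} from that endpoint yields a bound $C$ that is uniform in the starting point. This uniformity needs checking in the proof of that lemma: the constant $K$ and the decay factor only improve as the starting point increases, and the initial value there is $0$. The symmetric argument with \autoref{lem:lower_diff_ineq_gen} controls the negative part.

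If this becomes messy, a cleaner fallback is to apply \autoref{lem:Gronwall} directly to $|\sigma|$. In the weak sense, $|\sigma|'\le-(k_{11}t+c)|\sigma|+(bt+d)$, which yields the upper bound $|\sigma(t)|\le C$ at once. Then $\sigma=O(1)$, and dividing by $t^2$ gives the claimed expansion.
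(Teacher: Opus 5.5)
Your argument is correct and follows the paper's scheme. The paper gives no separate proof for $S$; it is \autoref{thm:L asymp} with the indices changed, and your proposal keeps its three ingredients: the ansatz $S=\frac{k_7+k_9Y}{k_{11}t}+\frac{\sigma}{t^2}$, a linear equation for the remainder with coefficient of order $k_{11}t$ and forcing of order $t$, and a differential-inequality bound. Two differences are worth noting.

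First, the paper splits the remainder as $\ell_1+\ell_2$, with $\ell_1$ explicit. Differentiating $\ell_1$ is the only place where $F'$ enters. The $\ell_2$-equation still has an $O(t)$ forcing term, $-k_{14}Y'(t)\,t/k_{16}$ in the $L$ case. So the split gains nothing, and your unsplit version needs only $Y$, $Y'$ and $F$ to be bounded.

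Second, the paper applies \autoref{lem:upper_diff_ineq_gen} and \autoref{lem:lower_diff_ineq_gen}, which are stated for a coefficient $at+c$ with constant $c$. The actual coefficient, $k_{11}t+k_{10}F(t)+k_8-2/t$ in your notation, is variable. Replacing it by a constant bound requires knowing the sign of the unknown, and the paper does not address this. Your $|\sigma|$ argument with \autoref{lem:Gronwall} closes this gap. The function $|\sigma|$ is absolutely continuous, and wherever $\sigma=0$ and $|\sigma|$ is differentiable one has $|\sigma|'=0\le bt+d$. After that, boundedness of the resulting integral is exactly the computation in the proof of \autoref{lem:upper_diff_ineq_gen}, so "at once" slightly understates the remaining work.

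There is one slip, in your second bullet. Write $\kappa=k_{11}t+k_{10}F+k_8-2/t$. For $\sigma<0$ you have $-\kappa\sigma=\kappa|\sigma|$. An \emph{upper} bound $\kappa\le k_{11}t+k_{10}c_5+k_8$ therefore gives an \emph{upper} inequality for $\sigma'$, which cannot bound $\sigma$ from below. The lower inequality needs the lower bound on $\kappa$ again: $\sigma'\ge-(k_{11}t+c)\sigma-(bt+d)$, with the same $c=k_{10}c_4+k_8-2/t_0$ as in your first bullet. The $c_4/c_5$ switch in \autoref{thm:L_t_estimation} relied on $L>0$ and does not carry over to a remainder of unknown sign. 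Your fallback via $|\sigma|$ already uses the correct bound, so make it the main argument.
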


\begin{theorem}\label{thm:lim_G}
We have the following limit 
\begin{equation*}
{\lim_{t\to\infty} G(t)=G_u+\frac{k_1}{s}.}
\end{equation*} 
\end{theorem}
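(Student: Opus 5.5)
The plan is to treat equation \eqref{eq:non_auto_1} as a perturbation of the scalar autonomous equation $G'=k_1-u(G)$. That equation has the unique globally attracting equilibrium $G_*:=G_u+\frac{k_1}{s}$, since $k_1-u(G_*)=k_1-s\frac{k_1}{s}=0$. I would write
\begin{equation*}
G'(t)=k_1-u(G(t))+\varepsilon(t),\qquad
\varepsilon(t):=k_1\left(e^{-\alpha_0 L(t)I(t)G(t)}-1\right)-k_2S(t)I(t)G(t).
\end{equation*}
The first step is to show $\varepsilon(t)\to 0$. By \autoref{lem:G pos} and the positivity lemma, $G,L,S,I>0$, so $e^{-x}\ge 1-x$ for $x\ge 0$ gives
\begin{equation*}
|\varepsilon(t)|\le \left(k_1\alpha_0 L(t)+k_2 S(t)\right)I(t)G(t).
\end{equation*}
Here $G(t)\le G_{\max}$ by \autoref{thm:G_upper_est}, and $I$ is bounded on $[0,\infty)$ by the theorem following \autoref{thm:B_t_upper_estimation}. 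By \autoref{lem:lim L S}, $L(t),S(t)\to 0$, so $\varepsilon(t)\to0$; in fact $\varepsilon(t)=O(1/t)$ by \autoref{thm:L_t_estimation} and \autoref{thm:S_t_estimation}.

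The second step shows that $G$ eventually enters the region $G>G_u$ and stays there. Choose $T_1$ with $|\varepsilon(t)|\le k_1/2$ for $t\ge T_1$. Whenever $t\ge T_1$ and $G(t)\le G_u$, we have $u(G(t))=0$, hence $G'(t)\ge k_1/2>0$.
\begin{itemize}
\item \emph{Reaching the region.} $G$ cannot stay in $(0,G_u]$ on all of $[T_1,\infty)$, since it would then grow at least linearly. So there is $T_2\ge T_1$ with $G(T_2)>G_u$.
\item \emph{Staying there.} Suppose $G$ returned to the level $G_u$ at some later time. At the first such time $t_1$ we would have $G'(t_1)\le 0$, by the argument of \autoref{lem:u_prime} applied from the left. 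This contradicts $G'(t_1)\ge k_1/2$.
\end{itemize}
Hence $G(t)>G_u$ for all $t\ge T_2$.

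The third step is a linear estimate on $[T_2,\infty)$. There $u(G)=s(G-G_u)$, so $z(t):=G(t)-G_*$ satisfies $z'=-sz+\varepsilon(t)$ exactly. Applying \autoref{lem:Gronwall} and \autoref{lem:reverse Gronwall}, shifted to start at $T_2$, gives
\begin{equation*}
z(t)=z(T_2)e^{-s(t-T_2)}+\int_{T_2}^t e^{-s(t-\tau)}\varepsilon(\tau)\,d\tau .
\end{equation*}
The first term tends to $0$. For the second, split the integral at $t/2$ (for $t\ge 2T_2$):
\begin{itemize}
\item the part over $[T_2,t/2]$ is bounded by $\sup|\varepsilon|\,e^{-st/2}/s$, which tends to $0$;
\item the part over $[t/2,t]$ is bounded by $\sup_{\tau\ge t/2}|\varepsilon(\tau)|/s$, which tends to $0$.
\end{itemize}
Hence $G(t)\to G_*$.

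I expect the main obstacle to be the second step, that is, handling the non-smooth threshold $u$. One has to make sure that $G$ does not oscillate around $G_u$ indefinitely, and the argument relies on $\varepsilon$ becoming small relative to $k_1$. If the invariance argument turned out delicate, a fallback would be a one-sided comparison: on the set where $G\le G_u$ one has $G'\ge k_1/2$, and on the set where $G>G_u$ the linear estimate above applies. Either route only uses that $\varepsilon(t)\to0$, which follows from the previously established boundedness of $I$ and $G$ and the decay of $L$ and $S$.
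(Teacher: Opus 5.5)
Your proof is correct, but it takes a different route from the paper's.

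\textbf{The paper's route.} The paper never leaves the nonlinear equation. It argues by contradiction in five steps on $\limsup$ and $\liminf$:
\begin{enumerate}
\item From boundedness of $G$ it gets $\limsup_{t\to\infty}G\ge G_u+k_1/s$.
\item It then uses Rolle's theorem on intervals between successive crossings of a level $p$ to find points $\tau_n$ with $G'(\tau_n)=0$.
\item Evaluating \eqref{eq:non_auto_1} at these $\tau_n$, together with the fact that the perturbation tends to $0$, it shows in turn $\liminf G>G_u$, $\liminf G\ge G_u+k_1/s$, $\liminf G=G_u+k_1/s$ and $\limsup G=G_u+k_1/s$.
\end{enumerate}

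\textbf{Your route.} You use the sign of $G'$ below the threshold to show directly that $G$ eventually enters $\{G>G_u\}$ and never leaves it. There the equation is exactly linear, and you finish with the variation-of-constants formula. Both proofs rest on the same ingredient, namely $\varepsilon(t)\to0$, which comes from $G\le G_{\max}$, boundedness of $I$, and $L,S\to0$.

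\textbf{What yours buys.} It is shorter and quantitative. With $G_*:=G_u+k_1/s$, for $t\ge 2T_2$ you get
\[
|G(t)-G_*|\le |z(T_2)|e^{-s(t-T_2)}+\frac{\sup|\varepsilon|}{s}e^{-st/2}+\frac1s\sup_{\tau\ge t/2}|\varepsilon(\tau)|,
\]
so any decay rate of $\varepsilon$ transfers directly to $G$. For example, you obtain $O(1/t)$ from the asymptotics of $L$ and $S$ alone, without the exponential decay of $B$ and $I$, which in the paper itself depends on Theorem~\ref{thm:lim_G}. Your argument also establishes the eventual confinement $G>G_u$ independently. The paper's later exponential-rate theorem has to take that fact from Theorem~\ref{thm:lim_G}.

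\textbf{What the paper's buys.} Its extremum argument yields no rate and needs the crossing-interval construction. In exchange it is purely qualitative: it uses $u$ only through monotonicity and the location of the root of $k_1=u(G)$. It would therefore carry over almost verbatim to a nonlinear increasing excretion term. In that setting your exact linear solution formula would have to be replaced by a comparison argument.
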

\begin{proof}
First we show that 
\begin{equation}\label{eq:limsup_greater}
\limsup_{t\to\infty} G(t)\geq G_u+\frac{k_1}{s}.
\end{equation}
Indirectly, assume that 
\begin{equation}
\limsup_{t\to\infty} G(t)<G_u+\frac{k_1}{s}.
\end{equation}
Then there exists $0<\varepsilon<k_1/(2s)$ such that 
\begin{equation}
G(t)<G_u+\frac{k_1}{s}-\varepsilon,\qquad t\geq t_0(\varepsilon).
\end{equation}
Then 
\begin{equation}
u(G(t))<k_1-s\varepsilon,\qquad t\geq t_0(\varepsilon).
\end{equation}
Since 
\begin{equation}
\lim_{t\to\infty} k_1e^{-\alpha L(t)I(t)G(t)}-k_2S(t)I(t)G(t)=k_1,
\end{equation}
there exists $t_1(\varepsilon,s)\geq t_0(\varepsilon)$
such that 
\begin{equation}
k_1e^{-\alpha L(t)I(t)G(t)}-k_2S(t)I(t)G(t)>k_1-\frac{s\varepsilon}{2}
, \qquad t\geq t_1(\varepsilon,s).
\end{equation}
Thus for $t\geq t_1(\varepsilon,s)$ we have
\begin{align*}
G'(t) &\geq \frac{s\varepsilon}{2},
\end{align*}
but this is impossible because $G(t)$ is bounded, hence 
the inequality \eqref{eq:limsup_greater} is proved.

Secondly we show that 
\begin{equation}\label{eq:liminf_tau_Gu_strict}
\liminf_{t\to\infty}G(t)> G_u.
\end{equation}
Assume that 
\begin{equation}
\liminf_{t\to\infty} G(t)<G_u.
\end{equation}
Then there exists a number $p$ such that 
\begin{equation}
\liminf_{t\to\infty}G(t)<p<G_u.
\end{equation}
By the definition of $\liminf$ it follows that there exist bounded intervals 
$[a_n,b_n]$ such that $b_n<a_{n+1}$, 
$\displaystyle{\lim_{n\to\infty}}a_n=\infty$, $G(a_n),\,G(b_n)=p$ and 
by Rolle's mean value theorem there 
exists $\tau_n\in (a_n,b_n)$ such that $G'(\tau_n)=0$ and 
$G(\tau_n)\leq p$.  
Substituting $\tau_n$ into the equation of $G'(t)$ in 
\eqref{eq:non_auto_1} we get
\begin{align*}\label{eq:liminf_tau_Gu}
0&=k_1e^{-\alpha L(\tau_n)I(\tau_n)G(\tau_n)}-
k_2S(\tau_n)I(\tau_n)G(\tau_n)-u(G(\tau_n))\\
&=k_1e^{-\alpha L(\tau_n)I(\tau_n)G(\tau_n)}-
k_2S(\tau_n)I(\tau_n)G(\tau_n).
\end{align*}
But this is impossible because the right hand side converges to $k_1$ 
as $n$ tends to infinity. \\
Assume that 
\begin{equation}
\liminf_{t\to\infty} G(t)=G_u.
\end{equation}
Let $0<\delta<k_1/(2s)$ be arbitrary.  
By the definition of $\liminf$ it follows that there exist bounded intervals 
$[a_n,b_n]$ such that $b_n<a_{n+1}$, 
$\displaystyle{\lim_{n\to\infty}}a_n=\infty$, $G(a_n),\,G(b_n)=G_u+
\delta$ and 
by Rolle's mean value theorem there 
exists $\tau_n\in (a_n,b_n)$ such that $G'(\tau_n)=0$ and 
$G(\tau_n)\leq G_u+\delta$.  
Substituting $\tau_n$ into the equation of $G'(t)$ in 
\eqref{eq:non_auto_1} we get
\begin{equation}\label{eq:liminf_tau2}
0=k_1e^{-\alpha L(\tau_n)I(\tau_n)G(\tau_n)}-
k_2S(\tau_n)I(\tau_n)G(\tau_n)-u(G(\tau_n)).
\end{equation}
Here 
\begin{equation*}
u(G(\tau_n))\leq s\delta<\frac{k_1}{2}.
\end{equation*}
Thus if $n\geq n_0$ then the right hand is greater than $k_1/4$ and 
cannot be zero. \\
Hence we proved \eqref{eq:liminf_tau_Gu_strict}.

Thirdly we prove that
\begin{equation}\label{eq:liminf_lower_estimation}
\liminf_{t\to\infty}G(t)\geq G_u+\frac{k_1}{s}.
\end{equation}
Indirectly, assume that 
\begin{equation}
\liminf_{t\to\infty}G(t)< G_u+\frac{k_1}{s}.
\end{equation}
Then there exists a number $p$ such that 
\begin{equation}\label{eq:p_less_than}
G_u< \liminf_{t\to\infty}G(t)<p<G_u+\frac{k_1}{s}.
\end{equation}
By the definition of $\liminf$ it follows that there exist bounded intervals 
$[a_n,b_n]$ such that $b_n<a_{n+1}$, 
$\displaystyle{\lim_{n\to\infty}}a_n=\infty$, $G(a_n),\,G(b_n)=p$ and 
by Rolle's mean value theorem there 
exist $\tau_n\in (a_n,b_n)$ such that $G'(\tau_n)=0$ and 
$G(\tau_n)<p$. If $n\geq n_0$ then $G(\tau_n)>G_u$. 
Substituting $\tau_n$ into the equation of $G'(t)$ in 
\eqref{eq:non_auto_1} we get
\begin{equation}\label{eq:liminf_tau}
0=k_1e^{-\alpha L(\tau_n)I(\tau_n)G(\tau_n)}-
k_2S(\tau_n)I(\tau_n)G(\tau_n)-u(G(\tau_n)).
\end{equation}
Let $\varepsilon>0$. Then there exists $n_0(\varepsilon)$ such that 
for $n\geq n_0(\varepsilon)$
\begin{equation}\label{eq:liminf_estimation1}
k_1e^{-\alpha L(\tau_n)I(\tau_n)G(\tau_n)}-
k_2S(\tau_n)I(\tau_n)G(\tau_n)\geq k_1-\varepsilon.
\end{equation}
Since 
\begin{equation}
u(G(\tau_n))\leq s(G(\tau_n)-G_u)\leq sp-sG_u
\end{equation}
we get
\begin{equation}\label{eq:uG_lower_estimation}
-u(G(\tau_n))\geq sG_u-sp.
\end{equation}
Using \eqref{eq:liminf_estimation1} and \eqref{eq:uG_lower_estimation} we obtain
\begin{equation*}
0\geq k_1-\varepsilon+sG_u-sp.
\end{equation*}
Equivalently, 
\begin{equation}
p\geq G_u+\frac{k_1}{s}-\frac{\varepsilon}{s}.
\end{equation}
Since $\varepsilon>0$ is arbitrary we obtain
\begin{equation*}
p\geq G_u+\frac{k_1}{s}
\end{equation*}
which contradicts to \eqref{eq:p_less_than}. So 
\eqref{eq:liminf_lower_estimation} is proved.

Now we prove that
\begin{equation}\label{eq:liminf_is}
\liminf_{t\to\infty}G(t)= G_u+\frac{k_1}{s}.
\end{equation}
Indirectly, assume that
\begin{equation}
\liminf_{t\to\infty}G(t)> G_u+\frac{k_1}{s}.
\end{equation}
Let $\varepsilon>0$ such that 
\begin{equation*}
\liminf_{t\to\infty}G(t)>G_u+\frac{k_1}{s}+\varepsilon.
\end{equation*}
Then for $t\geq t_0(\varepsilon)$ 
\begin{equation*}
G(t)>G_u+\frac{k_1}{s}+\varepsilon,
\end{equation*}
so
\begin{equation}\label{eq:uG_lower}
u(G(t))>k_1+s\varepsilon.
\end{equation}
Since 
\begin{equation}\label{eq:k1}
k_1e^{-\alpha L(t)I(t)G(t)}-k_2S(t)I(t)G(t)<k_1,
\end{equation}
Using \eqref{eq:uG_lower} and \eqref{eq:k1}, from the equation of 
$G'(t)$ in \eqref{eq:non_auto_1} we get
\begin{equation*}
G'(t)< -s\varepsilon\qquad t\geq t_0(\varepsilon).
\end{equation*}
But this impossible because $G(t)\geq 0$. So \eqref{eq:liminf_is} 
is proved.

Lastly, we prove that
\begin{equation}\label{eq:limsup_is}
\limsup_{t\to\infty}G(t)= G_u+\frac{k_1}{s}.
\end{equation}
Indirectly, assume that 
\begin{equation}
G_u+\frac{k_1}{s}=\liminf_{t\to\infty}G(t)<\limsup_{t\to\infty}G(t).
\end{equation}
Then there exist a number $p$ such that
\begin{equation}
G_u+\frac{k_1}{s}=\liminf_{t\to\infty}G(t)<p<
\limsup_{t\to\infty}G(t).
\end{equation}
By the definition of $\limsup$ it follows that there exist bounded intervals 
$[a_n,b_n]$ such that $b_n<a_{n+1}$, 
$\displaystyle{\lim_{n\to\infty}}a_n=\infty$, $G(a_n),\,G(b_n)=p$ and 
by Rolle's mean value theorem there 
exist $\tau_n\in (a_n,b_n)$ such that $G'(\tau_n)=0$ and 
$G(\tau_n)>p$. 
Substituting $\tau_n$ into the equation of $G'(t)$ in 
\eqref{eq:non_auto_1} we get  
\begin{align}\label{eq:limsup_tau}
0&=k_1e^{-\alpha L(\tau_n)I(\tau_n)G(\tau_n)}-
k_2S(\tau_n)I(\tau_n)G(\tau_n)-u(G(\tau_n))\\
&< k_1-s(G(\tau_n)-G_u).\nonumber
\end{align}
From this inequality it follows
\begin{equation*}
G(\tau_n)<G_u+\frac{k_1}{s}.
\end{equation*}
But this is impossible because $G(\tau_n)>p>G_u+k_1/s$. So we proved 
 \eqref{eq:limsup_is}, and the statement of theorem is also proved. 
\end{proof}

\begin{theorem}\label{thm:Bt_est}
There exist constants $K_1,K_2>0$ such that 
\begin{equation*}
B(t)\leq K_1 e^{-K_2t}.
\end{equation*}
\end{theorem}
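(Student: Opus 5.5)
The plan is to start from the explicit representation \eqref{eq:B gen exp sol} and use the limit $G(t)\to G_u+k_1/s$ from \autoref{thm:lim_G} to obtain a uniform positive lower bound for $G$ on a tail interval. Write $g_\infty:=G_u+k_1/s>0$. By \autoref{thm:lim_G} there is $T>0$ such that $G(t)\geq g_\infty/2$ for $t\geq T$. Together with positivity (\autoref{lem:G pos}), this gives
\begin{equation*}
\int_z^t G(x)\,dx\geq \frac{g_\infty}{2}(t-z)\quad (T\leq z\leq t),
\qquad
\int_0^t G(x)\,dx\geq \frac{g_\infty}{2}(t-T)\quad (t\geq T).
\end{equation*}

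Rather than manipulating the integral formula directly, I will work from the differential inequality. For $t\geq T$ we have
\begin{equation*}
B'(t)=k_3e^{-\lambda t}-k_4G(t)B(t)\leq -\mu B(t)+k_3e^{-\lambda t},\qquad \mu:=\frac{k_4 g_\infty}{2}>0.
\end{equation*}
Applying \autoref{lem:Gronwall} on $[T,t]$ with $a\equiv-\mu$ and $b(s)=k_3e^{-\lambda s}$ gives
\begin{equation*}
B(t)\leq B(T)e^{-\mu(t-T)}+k_3\int_T^t e^{-\mu(t-s)}e^{-\lambda s}\,ds .
\end{equation*}
The convolution integral is bounded by $(t-T)e^{-\min(\mu,\lambda)(t-T)}e^{-\lambda T}$ in every case, including $\mu=\lambda$. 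Hence for any $0<K_2<\min(\mu,\lambda)$ it is at most a constant times $e^{-K_2 t}$. Also $B(T)$ is finite by \autoref{thm:B_t_upper_estimation}, so both terms are $O(e^{-K_2t})$ on $[T,\infty)$.

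On the compact interval $[0,T]$, \autoref{thm:B_t_upper_estimation} bounds $B$ by $B_0+k_3/\lambda$. Choosing $K_1\geq (B_0+k_3/\lambda)e^{K_2T}$, large enough to also absorb the tail constants, yields $B(t)\leq K_1e^{-K_2t}$ for all $t\geq0$.

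The only genuinely nontrivial input is the positive lower bound on $G$ for large $t$, which here comes from \autoref{thm:lim_G}. Everything else is routine Gronwall estimation. The one point that needs care is the resonant case $\mu=\lambda$, where the integral produces a factor $(t-T)$. I handle this by taking $K_2$ strictly smaller than $\min(\mu,\lambda)$, so the polynomial factor is absorbed into the exponential.
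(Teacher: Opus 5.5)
Your proof is correct and is essentially the paper's argument. The paper also uses $G(t)\to G_u+k_1/s$ to obtain $G>q$ on a tail $[T_q,\infty)$ and then estimates the variation-of-constants formula split at $T_q$: its $J_1+B_2$ corresponds to your $B(T)e^{-\mu(t-T)}$ term and its $J_2$ to your convolution integral. Where the paper does a case split on $k_4q$ versus $\lambda$, you take $K_2<\min(\mu,\lambda)$, which also cleanly absorbs the resonant $t e^{-\lambda t}$ case that the paper leaves implicit. The one input you use without stating it is $B(t)>0$ (from the positivity lemma), which the step $-k_4G(t)B(t)\le-\mu B(t)$ requires.
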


\begin{proof}
We know that (see \eqref{eq:B gen exp sol}) 
\begin{align}\label{eq:I1I2}
B(t) &=\left(
\int_0^t k_3 e^{-\lambda z +k_4 \int_0^z G(x)\,dx}
\,dz+
B_0
\right)e^{-k_4 \int_0^t G(z)\,dz} \nonumber \\
&=:B_1(t)+B_2(t).
\end{align}
First we estimate $B_1(t)$. Obviously
\begin{equation*}
\left(\int_0^t k_3 e^{-\lambda z +k_4 \int_0^z G(x)\,dx}
\,dz\right) e^{-k_4 \int_0^t G(z)\,dz}=\int_0^t k_3 e^{-\lambda z -k_4 \int_z^t G(x)\,dx}
\,dz.
\end{equation*}
Denote
\begin{equation*}
g_0:=\lim_{t\to\infty} G(t)=G_u+\frac{k_1}{s}.
\end{equation*}
Let $0<q<g_0$. Then there exists $T_q>0$ such that $G(x)> q$ for $x\geq T_q$. Thus for $t\geq T_q$ we 
have
\begin{align}\label{eq:J1J2}
B_1(t) &= \int_0^{T_q} k_3 e^{-\lambda z -k_4 \int_z^t G(x)\,dx}
\,dz+\int_{T_q}^t k_3 e^{-\lambda z -k_4 \int_z^t G(x)\,dx}\,dz \nonumber \\
&=:J_1(t)+J_2(t).
\end{align}
First we estimate $J_1(t)$. Since $z\leq T_q$ therefore
\begin{equation*}
\int_z^t G(x)\,dx=\int_z^{T_q} G(x)\,dx+\int_{T_q}^t G(x)\,dx.
\end{equation*}
When $x\geq T_q$ then $G(x)>q$ thus
\begin{equation*}
\int_{T_q}^t G(x)\,dx>q(t-T_q).
\end{equation*}
Hence we obtain 
\begin{equation*}
e^{-k_4\int_z^t G(x)\,dx}\leq e^{-k_4 q(t-T_q)} e^{-k_4\int_z^{T_q} G(x)\,dx}.
\end{equation*}
Thus we can estimate 
\begin{align}\label{eq:J1_est}
J_1(t) &\leq k_3 e^{-k_4 q(t-T_q)}\int_0^{T_q} e^{-\lambda z-k_4\int_z^{T_q} G(x)\,dx}\,dz\nonumber \\
&=\left(k_3 e^{k_4 qT_q } \int_0^{T_q} e^{-\lambda z -k_4 \int_z^{T_q} G(x)\,dx}\,dz \right)
e^{-k_4 q t} \nonumber\\
&=C_q e^{-k_4 q t}.
\end{align}
Now we estimate $J_2(t)$. If $x\geq T_q$ then $G(x)>q$. Thus
\begin{equation*}
J_2(t)<k_3 \int_{T_q}^t e^{-\lambda z -k_4 q(t-z)}\,dz=k_3 e^{-k_4 qt}\int_{T_q}^t e^{(k_4 q-\lambda)z}\,dz.
\end{equation*}
If $k_4 q>\lambda$ then 
\begin{equation}\label{eq:J2_first_est}
J_2(t)<\frac{k_3}{k_4q-\lambda} e^{-\lambda t}.
\end{equation}
If $k_4 q=\lambda$ then 
\begin{equation}\label{eq:J2_second_est}
J_2(t)<k_3 t e^{-\lambda t}.
\end{equation}
If $k_4 q<\lambda$ then 
\begin{equation}\label{eq:J2_third_est}
J_2(t)<k_3 \frac{e^{-\lambda T_q}}{\lambda - k_4 q}e^{-k_4 qt}.
\end{equation}
Finally we estimate $B_2(t)$. Assume that $t\geq T_q$. Then 
\begin{align}\label{eq:I2_est}
B_0 e^{-k_4 \int_0^t G(z)\,dz} &=e^{-k_4 \int_0^{T_q} G(z)\,dz} e^{-k_4 \int_{T_q}^t G(z)\,dz} \nonumber\\
&< B_0 e^{-k_4 \int_0^{T_q} G(z)\,dz} e^{-k_4 q(t-T_q)} \nonumber \\
&=B_0 e^{-k_4 \int_0^{T_q} G(z)\,dz +k_4 q T_q} e^{-k_4q t}\nonumber\\
&=C_1 e^{-k_4q t}.
\end{align}
From \eqref{eq:I1I2}-\eqref{eq:I2_est} the estimation in \autoref{thm:Bt_est} follows. 
\end{proof}

\begin{theorem}\label{thm:I_upper_estim}
There exist constants $K_3,K_4>0$ such that
\begin{equation*}
I(t)\leq K_3 e^{-K_4 t}.
\end{equation*}
\end{theorem}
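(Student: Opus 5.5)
We use the variation-of-constants representation \eqref{eq:I gen exp sol},
\begin{equation*}
I(t)=I_0e^{-k_5t}+\int_0^t k_6\frac{B(z)}{B_0}\,h(G(z))\,e^{-k_5(t-z)}\,dz,
\qquad h(x)=\frac{x^{\gamma}}{c_1+x^{\gamma}},
\end{equation*}
and combine it with \autoref{thm:Bt_est}. The first term already has the required form, with constant $I_0$ and rate $k_5$.

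For the integral term, note that $0\le h(G(z))\le 1$ since $G>0$. Then \autoref{thm:Bt_est} gives $B(z)\le K_1e^{-K_2z}$, so
\begin{equation*}
\int_0^t k_6\frac{B(z)}{B_0}h(G(z))e^{-k_5(t-z)}\,dz\le \frac{k_6K_1}{B_0}\,e^{-k_5t}\int_0^t e^{(k_5-K_2)z}\,dz .
\end{equation*}
We evaluate this elementary integral in three cases, as in the estimate of $J_2(t)$ in the proof of \autoref{thm:Bt_est}.

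\textbf{Case $K_2<k_5$.} The bound is at most $\frac{k_6K_1}{B_0(k_5-K_2)}e^{-K_2t}$.

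\textbf{Case $K_2>k_5$.} The bound is at most $\frac{k_6K_1}{B_0(K_2-k_5)}e^{-k_5t}$.

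\textbf{Case $K_2=k_5$.} The bound equals $\frac{k_6K_1}{B_0}\,t\,e^{-k_5t}$. Here we absorb the factor $t$ by lowering the rate: $te^{-k_5t}\le \frac{2}{ek_5}e^{-k_5t/2}$. Alternatively, we simply replace $K_2$ by a slightly smaller positive constant in \autoref{thm:Bt_est}, which is allowed since the inequality only weakens, and so reduce to the first case.

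In every case we then set $K_4:=\min(k_5,K_2)/2$, or $\min(k_5,K_2)$ outside the equality case, and take $K_3$ as the sum of the constants obtained. Since $e^{-rt}\le e^{-K_4t}$ for $r\ge K_4$, this gives $I(t)\le K_3e^{-K_4t}$ for all $t\ge0$.

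The main point needing care is that \autoref{thm:Bt_est} as stated holds for all $t\ge0$. Its proof actually gives the bound for $t\ge T_q$, but on $[0,T_q]$ the function $B$ is bounded by \autoref{thm:B_t_upper_estimation}, so enlarging $K_1$ makes the bound global. Apart from this and the resonant case $K_2=k_5$, the argument is routine.
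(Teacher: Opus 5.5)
Your proof is correct and follows essentially the same route as the paper: the variation-of-constants formula \eqref{eq:I gen exp sol}, the exponential bound on $B$ from Theorem~\ref{thm:Bt_est}, and a three-case evaluation of $e^{-k_5t}\int_0^t e^{(k_5-K_2)z}\,dz$ (the paper bounds the Hill factor by its value at $G_{\max}$ rather than by $1$, which makes no difference). You also fill in two details the paper's proof glosses over: you absorb the factor $t$ in the resonant case $K_2=k_5$, where the paper stops at $te^{-k_5t}$, and you extend the bound on $B$ to $[0,T_q]$ via Theorem~\ref{thm:B_t_upper_estimation}.
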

\begin{proof}
We know that (see \eqref{eq:I gen exp sol})
\begin{equation}\label{eq:I0}
I(t) =\left(
\int_0^t k_6\frac{B(z)}{B_0}
\frac{(G(z))^{\gamma}}{c_1+(G(z))^{\gamma}}e^{k_5z}\,dz+
I_0
\right)e^{-k_5t}.
\end{equation}
Using \autoref{thm:G_upper_est} and \autoref{thm:Bt_est} we obtain 
\begin{equation}
\int_0^t k_6\frac{B(z)}{B_0}
\frac{(G(z))^{\gamma}}{c_1+(G(z))^{\gamma}}e^{k_5z}\,dz \leq \frac{k_6 K_1}{B_0}
\frac{G_{\max}^{\gamma}}{c_1+G_{\max}^{\gamma}}
\left( \int_0^t e^{-K_2 z+k_5 z}\,dz\right) e^{-k_5 t}. 
\end{equation}
If $k_5>K_2$ then
\begin{equation}
\left( \int_0^t e^{-K_2 z+k_5 z}\,dz\right) e^{-k_5 t}< e^{-K_2 t}.
\end{equation}
If $k_5=K_2$ then
\begin{equation}
\left( \int_0^t e^{-K_2 z+k_5 z}\,dz\right) e^{-k_5 t}=t e^{-k_5 t}.
\end{equation}
If $k_5<K_2$ then
\begin{equation}\label{eq:I_int_estimation}
\left( \int_0^t e^{-K_2 z+k_5 z}\,dz\right) e^{-k_5 t}<\frac{1}{K_2-k_5} e^{-k_5 t}.
\end{equation}
From \eqref{eq:I0}-\eqref{eq:I_int_estimation} the estimation in \autoref{thm:I_upper_estim} follows.
\end{proof}
\begin{theorem}
There exist constants $K_5,K_6>0$ such that 
\begin{equation*}
\left|  G(t)-\left( G_u+\frac{k_1}{s} \right) \right|\leq K_5 e^{-K6 t}.
\end{equation*}
\end{theorem}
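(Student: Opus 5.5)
The plan is to use \autoref{thm:lim_G} to linearize the equation for $G$ exactly for large $t$, and then treat the nonlinear terms as a perturbation that decays exponentially by \autoref{thm:I_upper_estim}. Write $g_0:=G_u+\frac{k_1}{s}$ and $e(t):=G(t)-g_0$.

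\textbf{Step 1 (removing the non-smoothness).} Since $g_0>G_u$, \autoref{thm:lim_G} gives a $T>0$ such that $G(t)>G_u$ for all $t\geq T$. On $[T,\infty)$ we then have $u(G(t))=s(G(t)-G_u)$. Substituting this into \eqref{eq:non_auto_1} gives the linear inhomogeneous equation
\begin{equation*}
e'(t)=-s\,e(t)+r(t),\qquad r(t):=k_1\left(e^{-\alpha_0 L(t)I(t)G(t)}-1\right)-k_2S(t)I(t)G(t).
\end{equation*}

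\textbf{Step 2 (exponential bound on the forcing).} From $|e^{-x}-1|\leq x$ for $x\geq 0$ and positivity of $L,I,G$ we get
\begin{equation*}
|r(t)|\leq \left(k_1\alpha_0 L(t)+k_2S(t)\right)G(t)\,I(t).
\end{equation*}
Each factor is controlled by earlier results:
\begin{itemize}
\item $G(t)\leq G_{\max}$ by \autoref{thm:G_upper_est};
\item $L$ and $S$ are bounded on $[0,\infty)$, since they are continuous and tend to $0$ by \autoref{lem:lim L S};
\item $I(t)\leq K_3e^{-K_4t}$ by \autoref{thm:I_upper_estim}.
\end{itemize}
Hence $|r(t)|\leq Me^{-K_4t}$ for $t\geq T$, with a constant $M$.

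\textbf{Step 3 (variation of constants).} For $t\geq T$,
\begin{equation*}
e(t)=e(T)e^{-s(t-T)}+\int_T^t e^{-s(t-z)}r(z)\,dz .
\end{equation*}
Bounding $|r|$ as in Step 2 and splitting into the cases $s\neq K_4$ and $s=K_4$, exactly as in the proof of \autoref{thm:I_upper_estim}, the integral is at most
\begin{itemize}
\item $\frac{M}{|s-K_4|}\,e^{-\min(s,K_4)t}$ when $s\neq K_4$;
\item $M t e^{-st}$ when $s=K_4$.
\end{itemize}
In the resonant case I absorb the factor $t$ by choosing any $K_6<\min(s,K_4)$, using $te^{-st}\leq C_\eta e^{-(s-\eta)t}$. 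So $|e(t)|\leq K e^{-K_6 t}$ for $t\geq T$.

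\textbf{Step 4 (the initial interval).} On $[0,T]$, $|e(t)|\leq G_{\max}+g_0$ by \autoref{thm:G_upper_est} and positivity of $G$. Enlarging the constant to $K_5:=\max\left(K,(G_{\max}+g_0)e^{K_6T}\right)$ gives the estimate for all $t\geq 0$.

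The only delicate point is Step 1. The time $T$ comes from the non-quantitative limit in \autoref{thm:lim_G}, but that is harmless because only the existence of $T$ is needed. Beyond that, the main technical care is the resonant case $s=K_4$, which is handled by slightly reducing the exponent as in Step 3.
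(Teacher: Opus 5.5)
Your proposal is correct and follows essentially the same route as the paper. Both arguments use \autoref{thm:lim_G} to get $G>G_u$ eventually, rewrite the glucose equation as $H'=-sH+h(t)$ with $h=O(e^{-K_4t})$ via \autoref{thm:I_upper_estim}, and conclude by variation of constants with the same case split as in that theorem. Your version is somewhat more careful than the paper's: you use the exact formula instead of two one-sided Gronwall bounds, you treat the resonant case $s=K_4$ explicitly, and you cover the initial interval $[0,T]$. These are refinements, not a different argument.
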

\begin{proof}

We know (\autoref{thm:lim_G}) that there exists $t_0>0$ such that $G(t)>G_u$. Thus for $t>t_0$ 
the equation \eqref{eq:non_auto_1} becomes 
\begin{equation}\label{eq:G_prime}
G'(t)=k_1 e^{-\alpha_0 L(t)I(t) G(t)}-k_2 S(t)I(t)G(t)-s(G(t)-G_u).
\end{equation}
Obviously
\begin{equation}
k_1 e^{-\alpha_0 L(t)I(t) G(t)}=k_1+O\left( L(t)I(t)G(t) \right).
\end{equation}
Since $G(t)$ is bounded, using \autoref{thm:L_t_estimation}, \autoref{thm:S_t_estimation}, and 
\autoref{thm:I_upper_estim} we have
\begin{equation}
k_1 e^{-\alpha_0 L(t)I(t) G(t)}=k_1+O\left( e^{-K_4 t}\right).
\end{equation}
Similarly
\begin{equation}
k_2 S(t)I(t)G(t)=O\left( e^{-K_4 t}\right).
\end{equation}
Thus from equation \eqref{eq:G_prime} it follows
\begin{equation}
G'(t)=-s\left(G(t)- \left( G_u+\frac{k_1}{s} \right) \right)+h(t),
\end{equation}
where $h(t)=O\left( e^{-K_4 t}\right)$.
Now we introduce the function 
\begin{equation}
H(t):=G(t)-\left( G_u+\frac{k_1}{s} \right).
\end{equation}
Then we get
\begin{equation}
H'(t)=-sH(t)+h(t). 
\end{equation}
From Gronwall inequality we obtain
\begin{align*}
H(t) &\leq \left(H(t_0)e^{st_0}  \right) e^{-st}+\int_{t_0}^t e^{-s(t-z)} h(z)\,dz \\
&\leq\left(H(t_0)e^{st_0}\right)e^{-st}+c_0\int_{t_0}^t e^{-s(t-z)} e^{-K_4 z} \,dz \\
&=\left(H(t_0)e^{st_0}\right)e^{-st}+c_0\int_{t_0}^t e^{-K_4 z+sz}  \,dz\, e^{-st}.
\end{align*}
Applying the method was used in \autoref{thm:I_upper_estim} we get the upper estimation. 
Using the reverse Gronwall inequality, we get the lower estimation. 
\end{proof}

\subsection{\texorpdfstring{The system $A(t)=A_0$}{The system A(t)=A0}}

\begin{theorem}
For $t\geq 0$ we have
\begin{equation*}
L(t) \geq \frac{k_{12}+k_{14}c_2}{k_{15}c_5+k_{13}}+
\left( L_0-\frac{k_{12}+k_{14}c_2}{k_{15}c_5+k_{13}} \right)e^{-(k_{15}c_5+k_{13})t},
\end{equation*}
and
\begin{equation*}
L(t) \leq \frac{k_{12}+k_{14}c_3}{k_{15}c_4+k_{13}}+
\left( L_0-\frac{k_{12}+k_{14}c_3}{k_{15}c_4+k_{13}} \right)e^{-(k_{15}c_4+k_{13})t}.
\end{equation*}
\end{theorem}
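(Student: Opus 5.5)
We will bound the linear equation \eqref{eq:auto_5} from above and below by constant-coefficient linear differential inequalities, and then apply \autoref{cor:diffineq_cor} and \autoref{cor:reversediffineq_cor}. The inputs we need are $L(t)>0$ (proved in the previous section for the system $A(t)=A_0$) and the bounds \eqref{eq:F_Y} on $Y$ and $F$.

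\emph{Upper bound.} Since $L(t)>0$, $Y(t)\le c_3$ and $F(t)\ge c_4$, for all $t\ge 0$ we have
\begin{equation*}
L'(t)=k_{12}+k_{14}Y(t)-(k_{15}F(t)+k_{13})L(t)\le (k_{12}+k_{14}c_3)-(k_{15}c_4+k_{13})L(t).
\end{equation*}
The only point needing care is that $F(t)\ge c_4$ may be multiplied by $L(t)$ without reversing the inequality; this is exactly where positivity of $L$ is used. Set $a:=k_{15}c_4+k_{13}>0$ and $b:=k_{12}+k_{14}c_3$. Then \autoref{cor:diffineq_cor} on $[0,T)$, with $T$ arbitrary, gives
\begin{equation*}
L(t)\le \frac{b}{a}+\left(L_0-\frac{b}{a}\right)e^{-at},
\end{equation*}
which is the second inequality of the statement. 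Positivity of $a$ holds because $k_{13}>0$, even if $c_4=0$.

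\emph{Lower bound.} In the same way, using $L(t)>0$, $Y(t)\ge c_2$ and $F(t)\le c_5$,
\begin{equation*}
L'(t)\ge (k_{12}+k_{14}c_2)-(k_{15}c_5+k_{13})L(t).
\end{equation*}
Take $a:=k_{15}c_5+k_{13}>0$ and $b:=k_{12}+k_{14}c_2$. Then \autoref{cor:reversediffineq_cor} (which rests on \autoref{lem:reverse Gronwall}) yields the first inequality.

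\emph{Alternative route and main difficulty.} As a cross-check, one could instead start from the explicit formula \eqref{eq:L gen exp sol A0}, but comparing the integrating factors there is messier, so the differential-inequality route is preferred. There is no substantial obstacle. The only delicate point is the sign bookkeeping, namely that the bound on $F$ must be chosen to match the direction of the inequality, and this relies on $L>0$. Also, the corollaries are stated on $[0,T)$; since the solution exists for all $t\ge0$ and $T$ is arbitrary, the bounds hold for every $t\ge 0$.
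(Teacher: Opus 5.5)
Your argument is correct, but it follows a different route from the paper. The paper does not use differential inequalities here. It works directly from the closed-form solution \eqref{eq:L gen exp sol A0} and splits $L(t)$ into two pieces:
\begin{itemize}
\item the homogeneous part $L_0e^{-\int_0^t(k_{15}F(x)+k_{13})\,dx}$;
\item the Duhamel integral $\int_0^t(k_{12}+k_{14}Y(z))e^{-\int_z^t(k_{15}F(x)+k_{13})\,dx}\,dz$.
\end{itemize}
It then bounds the exponent $\int_z^t(k_{15}F(x)+k_{13})\,dx$ between $(k_{15}c_4+k_{13})(t-z)$ and $(k_{15}c_5+k_{13})(t-z)$, and the source $k_{12}+k_{14}Y(z)$ between $k_{12}+k_{14}c_2$ and $k_{12}+k_{14}c_3$. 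Integrating and adding the two pieces gives exactly the stated bounds.

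What the paper's route buys:
\begin{itemize}
\item It needs no Gronwall-type lemma.
\item It uses only $L_0>0$ and nonnegativity of the source, not positivity of $L$ along the whole trajectory.
\item It is not really messier than yours, since the comparison of integrating factors is just the one-line exponent bound above.
\end{itemize}

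What your route buys: it works at the level of the ODE. Positivity of $L$ lets you freeze the variable coefficients $F$ and $Y$ at the appropriate extreme values in the correct direction. Then \autoref{cor:diffineq_cor} and \autoref{cor:reversediffineq_cor} do the integration for you. This is shorter, reuses machinery already in the paper, and carries over unchanged to the analogous estimate for $S$.

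The price is that you depend on the previously established positivity of $L(t)$ for all $t\ge 0$. In the paper, that positivity was itself read off from the same explicit formula, so both arguments ultimately rest on the same fact.
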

\begin{proof}
We know that (\eqref{eq:L gen exp sol A0})
\begin{align*}
L(t) = & \left(\int_0^t \left( k_{12} + k_{14}Y(z) \right) 
e^{\int_0^z k_{15}F(x)+k_{13} \, dx} \, dz +L_0 \right) \\
   &\times e^{-\int_0^t k_{15}F(x)+k_{13} \, dx}. 
\end{align*}
Since $c_4\leq F(t)\leq c_5$ we obtain the following estimations
\begin{equation}
 L_0 e^{-\int_0^t k_{15}F(x)+k_{13} \, dx}\geq L_0 e^{-(k_{15}c_5+k_{13})t},
\end{equation}
and 
\begin{equation}
 L_0 e^{-\int_0^t k_{15}F(x)+k_{13} \, dx}\leq L_0 e^{-(k_{15}c_4+k_{13})t}.
\end{equation}
The other term can be written in the following form
\begin{equation}
\int_0^t \left( k_{12} + k_{14}Y(z) \right) 
e^{-\int_z^t k_{15}F(x)+k_{13} \, dx} \, dz. 
\end{equation}
Since  $c_2\leq Y(t)\leq c_3$,  we can estimate this integral 
\begin{align}
\int_0^t \left( k_{12} + k_{14}Y(z) \right) 
e^{-\int_z^t k_{15}F(x)+k_{13} \, dx} \, dz &\geq (k_{12}+k_{14}c_2) 
\int_0^t e^{-(k_{15}c_5+k_{13})(t-z)}\,dz \nonumber\\
&=\frac{k_{12}+k_{14}c_2}{k_{15}c_5+k_{13}}\left(1-e^{-(k_{15}c_5+k_{13})t}  \right),
\end{align}
and
\begin{align}
\int_0^t \left( k_{12} + k_{14}Y(z) \right) 
e^{-\int_z^t k_{15}F(x)+k_{13} \, dx} \, dz &\leq (k_{12}+k_{14}c_3) 
\int_0^t e^{-(k_{15}c_4+k_{13})(t-z)}\,dz \nonumber\\
&=\frac{k_{12}+k_{14}c_3}{k_{15}c_4+k_{13}}\left(1-e^{-(k_{15}c_4+k_{13})t}  \right).
\end{align}
\end{proof}

\begin{corollary}
If $Y(t)=Y_0$, $F(t)=F_0$, then for $t\geq 0$ we have
\begin{equation*}
L(t)=\frac{k_{12}+k_{14}Y_0}{k_{15}F_0+k_{13}}+\left( L_0-\frac{k_{12}+k_{14}Y_0}{k_{15}F_0+k_{13}} 
\right) e^{-(k_{15}F_0+k_{13})t}.
\end{equation*}
\end{corollary}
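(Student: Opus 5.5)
With $Y(t)\equiv Y_0$ and $F(t)\equiv F_0$, equation \eqref{eq:auto_5} becomes the scalar linear ODE with constant coefficients
\begin{equation*}
L'(t)=(k_{12}+k_{14}Y_0)-(k_{15}F_0+k_{13})L(t),\qquad L(0)=L_0 .
\end{equation*}
It can be solved exactly, and there are two equivalent ways to do it.

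The first is to specialize the explicit representation \eqref{eq:L gen exp sol A0}. Write $\mu:=k_{15}F_0+k_{13}>0$ and $\beta:=k_{12}+k_{14}Y_0>0$. The inner integral is $\int_0^z \mu\,dx=\mu z$, so
\begin{equation*}
L(t)=\left(\beta\int_0^t e^{\mu z}\,dz+L_0\right)e^{-\mu t}
=\frac{\beta}{\mu}\left(1-e^{-\mu t}\right)+L_0e^{-\mu t}.
\end{equation*}
Regrouping gives $\frac{\beta}{\mu}+\left(L_0-\frac{\beta}{\mu}\right)e^{-\mu t}$, which is the claimed formula.

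The second is to observe that the two bounds in the preceding theorem coincide when $c_2=c_3=Y_0$ and $c_4=c_5=F_0$. This is admissible, since constant inputs satisfy \eqref{eq:F_Y} with these choices. The lower and upper estimates then pinch to the same expression, which yields equality.

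I would present the first route as the proof, since it is a direct computation, and mention the second as a consistency check. No step here is a genuine obstacle. The only care needed is to note that the uniqueness lemma for system \eqref{eq:auto_1}--\eqref{eq:auto_5} guarantees that the function given by \eqref{eq:L gen exp sol A0} is indeed the $L$-component of the solution. Alternatively, one can verify by differentiation that the displayed function satisfies the ODE and the initial condition.
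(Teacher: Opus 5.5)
Your proposal is correct and essentially matches the paper. The paper states this corollary without proof directly after the two-sided estimate for $L(t)$, so it follows exactly by your second route (take $c_2=c_3=Y_0$ and $c_4=c_5=F_0$, and the bounds coincide); your first route is the same explicit computation from \eqref{eq:L gen exp sol A0} that underlies that theorem, done with constant integrands, so either presentation works.
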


In the same way we obtain
\begin{theorem}
For $t\geq 0$ we have
\begin{equation*}
S(t) \geq \frac{k_{7}+k_{9}c_2}{k_{10}c_5+k_{8}}+
\left( L_0-\frac{k_{7}+k_{9}c_2}{k_{10}c_5+k_{8}} \right)e^{-(k_{10}c_5+k_{8})t},
\end{equation*}
and
\begin{equation*}
S(t) \leq \frac{k_{7}+k_{9}c_3}{k_{10}c_4+k_{8}}+
\left( L_0-\frac{k_{7}+k_{9}c_3}{k_{10}c_4+k_{8}} \right)e^{-(k_{10}c_4+k_{8})t}.
\end{equation*}
\end{theorem}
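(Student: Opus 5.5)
The plan is to copy the argument used for $L(t)$ in the previous theorem, now starting from the explicit representation \eqref{eq:S gen exp sol A0} of $S$. Moving the outer exponential inside the integral gives
\begin{equation*}
S(t)=S_0\,e^{-\int_0^t (k_{10}F(x)+k_{8})\,dx}+\int_0^t \left(k_7+k_9Y(z)\right)e^{-\int_z^t (k_{10}F(x)+k_8)\,dx}\,dz .
\end{equation*}
I will bound the two terms separately, from above and from below.

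For the first term, the bounds $c_4\le F(x)\le c_5$ give
\begin{equation*}
(k_{10}c_4+k_8)(t-z)\le\int_z^t (k_{10}F(x)+k_8)\,dx\le (k_{10}c_5+k_8)(t-z).
\end{equation*}
With $z=0$ this yields
\begin{equation*}
S_0e^{-(k_{10}c_5+k_8)t}\le S_0e^{-\int_0^t(k_{10}F+k_8)}\le S_0e^{-(k_{10}c_4+k_8)t}.
\end{equation*}
For the integral term, I combine the same exponential bounds with $c_2\le Y(z)\le c_3$ (all factors are positive). The integral is then at least $(k_7+k_9c_2)\int_0^t e^{-(k_{10}c_5+k_8)(t-z)}\,dz$ and at most $(k_7+k_9c_3)\int_0^t e^{-(k_{10}c_4+k_8)(t-z)}\,dz$. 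Evaluating these elementary integrals gives
\begin{equation*}
\frac{k_7+k_9c_2}{k_{10}c_5+k_8}\left(1-e^{-(k_{10}c_5+k_8)t}\right)
\quad\text{and}\quad
\frac{k_7+k_9c_3}{k_{10}c_4+k_8}\left(1-e^{-(k_{10}c_4+k_8)t}\right).
\end{equation*}

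Adding the lower bounds of the two terms, and separately the upper bounds, and regrouping gives an expression of the form $\frac{b}{a}+\left(S_0-\frac{b}{a}\right)e^{-at}$ in each case. An alternative route is to note that $S'\le -(k_{10}c_4+k_8)S+(k_7+k_9c_3)$ and $S'\ge -(k_{10}c_5+k_8)S+(k_7+k_9c_2)$, which hold because $S>0$. \autoref{cor:diffineq_cor} and \autoref{cor:reversediffineq_cor} then give the same bounds directly.

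There is no real obstacle; the only point needing care is the initial value. The coefficient that appears is $S_0$, not $L_0$: the statement as printed has $L_0$, which looks like a copy-over typo from the $L$ theorem, and the proof will establish the bounds with $S_0$ in its place. One must also keep track of which of $c_4,c_5$ pairs with the lower or upper bound, since the exponent's sign reverses the inequality.
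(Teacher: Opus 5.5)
Your proof is correct and takes essentially the paper's route: the paper proves this bound ``in the same way'' as the preceding $L$ theorem, splitting the representation \eqref{eq:S gen exp sol A0} into the initial-value term and the integral term and bounding each with $c_4\le F\le c_5$ and $c_2\le Y\le c_3$. You are also right that $L_0$ must be read as $S_0$, since at $t=0$ the printed bounds would force $S_0=L_0$.
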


\begin{corollary}
If $Y(t)=Y_0$, $F(t)=F_0$, then for $t\geq 0$ we have
\begin{equation*}
S(t)=\frac{k_{7}+k_{9}Y_0}{k_{10}F_0+k_{8}}+\left( S_0-\frac{k_{7}+k_{9}Y_0}{k_{10}F_0+k_{8}} 
\right) e^{-(k_{15}F_0+k_{13})t}.
\end{equation*}
\end{corollary}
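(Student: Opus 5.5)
The plan is to specialise the explicit solution formula \eqref{eq:S gen exp sol A0} to constant inputs, exactly as in the corollary for $L(t)$. With $Y(t)\equiv Y_0$ and $F(t)\equiv F_0$, equation \eqref{eq:auto_4} becomes the scalar linear ODE with constant coefficients
\begin{equation*}
S'(t)=(k_7+k_9Y_0)-(k_{10}F_0+k_8)S(t),\qquad S(0)=S_0 .
\end{equation*}
Write $\mu:=k_{10}F_0+k_8>0$ and $\beta:=k_7+k_9Y_0>0$.

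In \eqref{eq:S gen exp sol A0} the inner integral $\int_0^z (k_{10}F_0+k_8)\,dx$ equals $\mu z$. Hence
\begin{equation*}
S(t)=\Bigl(\beta\int_0^t e^{\mu z}\,dz+S_0\Bigr)e^{-\mu t}
=\frac{\beta}{\mu}\bigl(1-e^{-\mu t}\bigr)+S_0e^{-\mu t}.
\end{equation*}
Rearranging gives $S(t)=\frac{\beta}{\mu}+\bigl(S_0-\frac{\beta}{\mu}\bigr)e^{-\mu t}$. Alternatively, both estimates in the preceding theorem on $S(t)$ collapse to equalities when $c_2=c_3=Y_0$ and $c_4=c_5=F_0$.

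As a sanity check, I would differentiate the candidate and substitute it into the ODE. The uniqueness lemma for \eqref{eq:auto_1}--\eqref{eq:auto_5} then confirms that this is the solution. The $S$-equation is decoupled from $G,B,I,L$, so no interaction with the other components arises.

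The only point requiring care is bookkeeping: the decay rate must be $\mu=k_{10}F_0+k_8$, which comes from the $S$-equation. The exponent $k_{15}F_0+k_{13}$ written in the statement is inherited from the $L$-corollary and is a typographical slip. Likewise the preceding theorem should read $S_0$ rather than $L_0$. The identity is correct with $e^{-(k_{10}F_0+k_8)t}$, and the proof will be written with that exponent. There is no genuine analytic obstacle.
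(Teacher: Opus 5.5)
Your proposal is correct and follows essentially the paper's implicit route. The paper gets the corollary by specialising the explicit formula \eqref{eq:S gen exp sol A0} to constant inputs, or equivalently by letting the two-sided bounds of the preceding theorem collapse with $c_2=c_3=Y_0$ and $c_4=c_5=F_0$. You are also right that the statement as printed is false in general. The exponent must be $-(k_{10}F_0+k_8)t$, not $-(k_{15}F_0+k_{13})t$; the printed version satisfies the ODE only when $S_0=\frac{k_7+k_9Y_0}{k_{10}F_0+k_8}$ or the two rates coincide. Likewise, $L_0$ should read $S_0$ in the preceding theorem.
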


The biological meaning of $G(t)$, and that the system \eqref{eq:1}-\eqref{eq:6} in a very special case has 
a unique positive equilibrium point that is locally asymptotically stable for all positive values 
of parameters (\cite{degaetano2024} Proposition 7.) motivate the following 

\begin{openproblem}\label{open:G bounded below}
Does exist a number $c_0>0$ such that $G(t)> c_0$ for all $t\geq t_{c_0}$?
\end{openproblem}

In the following we discuss what are the consequences of the 'yes' answer. 

\begin{theorem}\label{thm:B_upper_est_A0}
There exists constant $K_6>0$ such that
\begin{equation*}
B(t)\leq K_6.
\end{equation*}
\end{theorem}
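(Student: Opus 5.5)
We work under the affirmative answer to Open Problem \ref{open:G bounded below}: there are $c_0>0$ and $t_{c_0}\geq 0$ with $G(t)>c_0$ for $t\geq t_{c_0}$. We then bound $B$ separately on $[0,t_{c_0}]$ and on $[t_{c_0},\infty)$, using the explicit representation \eqref{eq:B gen exp sol A0}. Equivalently, we can use the differential inequality form of equation \eqref{eq:auto_2}.

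On the compact interval $[0,t_{c_0}]$ the bound is immediate. Since $G>0$ by \autoref{lem:G pos}, the exponential factor $e^{-k_4\int_z^t G(x)\,dx}$ is at most $1$. Rewriting \eqref{eq:B gen exp sol A0} as
\begin{equation*}
B(t)=\int_0^t k_3 e^{-k_4\int_z^t G(x)\,dx}\,dz+B_0e^{-k_4\int_0^t G(x)\,dx},
\end{equation*}
we get $B(t)\leq B_0+k_3 t\leq B_0+k_3t_{c_0}=:M$ for $t\in[0,t_{c_0}]$. Continuity of $B$ alone would also give this.

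For $t\geq t_{c_0}$ we use $G(t)>c_0$ and $B(t)>0$ (positivity lemma for the system $A(t)=A_0$) in \eqref{eq:auto_2}, which gives
\begin{equation*}
B'(t)=k_3-k_4G(t)B(t)\leq -k_4c_0B(t)+k_3 \qquad (t\geq t_{c_0}).
\end{equation*}
We then apply \autoref{cor:diffineq_cor}, shifted in time to start at $t_{c_0}$, with $a=k_4c_0$ and $b=k_3$. The corollary is stated on $[0,T)$, but it applies verbatim to $\tilde y(\tau)=B(\tau+t_{c_0})$. It yields
\begin{equation*}
B(t)\leq \max\left(\frac{k_3}{k_4c_0},\,B(t_{c_0})\right)\leq \max\left(\frac{k_3}{k_4c_0},\,M\right) \qquad (t\geq t_{c_0}).
\end{equation*}
Alternatively, the same bound follows by splitting the integral in the representation at $t_{c_0}$, exactly as for $J_1$ and $J_2$ in the proof of \autoref{thm:Bt_est} but with $\lambda=0$.

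Combining the two intervals, $K_6:=\max\left(k_3/(k_4c_0),\,B_0+k_3t_{c_0}\right)$ works for all $t\geq 0$. The only delicate point is the dependence on the hypothetical lower bound $c_0$. Without it, $\int_z^t G$ could grow sublinearly and $B$ could grow unboundedly, which is why the statement is conditional on Open Problem \ref{open:G bounded below}. Everything else is a routine application of the comparison corollary.
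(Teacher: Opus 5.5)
Your proof is correct. It uses the same mechanism as the paper: restart the linear equation for $B$ at $t_{c_0}$ and use $G>c_0$ from then on. The packaging differs. The paper stays with the representation \eqref{eq:B gen exp sol A0} and bounds the three pieces $J_1$, $J_2$, $B_2$ separately for $t\ge t_{c_0}$, which is the alternative you mention at the end. You instead apply \autoref{cor:diffineq_cor} to $B'\le -k_4c_0B+k_3$.

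The two routes are equivalent, since $J_1(t)+B_2(t)=B(t_{c_0})\,e^{-k_4\int_{t_{c_0}}^t G(x)\,dx}$. The paper's split is therefore exactly the restarted variation-of-constants formula that underlies the corollary. Your version is shorter and gives the explicit constant $K_6=\max\bigl(k_3/(k_4c_0),\,B_0+k_3t_{c_0}\bigr)$. It also handles the initial interval $[0,t_{c_0}]$ explicitly, which the paper leaves to an implicit continuity argument because its estimates are stated only for $t\ge t_{c_0}$.
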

\begin{proof}
We know that (see \eqref{eq:B gen exp sol A0}) 
\begin{align}\label{eq:B1B2_A0}
B(t) &=\left(
\int_0^t k_3 e^{k_4 \int_0^z G(x)\,dx}
\,dz+B_0\right)e^{-k_4 \int_0^t G(z)\,dz} \nonumber\\
&=:B_1(t)+B_2(t). 
\end{align}
We adopt the method was used in \autoref{thm:Bt_est}. 
First we estimate $B_1(t)$. Obviously 
\begin{equation*}
\left(\int_0^t k_3 e^{k_4 \int_0^z G(x)\,dx}
\,dz\right) e^{-k_4 \int_0^t G(z)\,dz}=\int_0^t k_3 e^{ -k_4 \int_z^t G(x)\,dx}
\,dz.
\end{equation*}
By the hypothetical assumption we have $G(t)> c_0$ for $t\geq t_{c_0}$. Thus for $t\geq t_{c_0}$ 
we have
\begin{align}\label{eq:J1J2_A0}
B_1(t) &= \int_0^{t_{c_0}} k_3 e^{-k_4 \int_z^t G(x)\,dx}
\,dz+\int_{t_{c_0}}^t k_3 e^{ -k_4 \int_z^t G(x)\,dx}\,dz \nonumber \\
&=:J_1(t)+J_2(t).
\end{align}
First we estimate $J_1(t)$. Since $z\leq t_{c_0}$ therefore
\begin{equation*}
\int_z^t G(x)\,dx=\int_z^{t_{c_0}} G(x)\,dx+\int_{t_{c_0}}^t G(x)\,dx.
\end{equation*}
When $x\geq t_{c_0}$ then $G(x)> c_0$ thus
\begin{equation*}
\int_{t_{c_0}}^t G(x)\,dx>c_0 (t-t_{c_0}).
\end{equation*}
Hence we obtain
\begin{equation*}
e^{-k_4\int_z^t G(x)\,dx}\,dz \leq e^{-k_4 c_0(t-t_{c_0})} e^{-k_4 \int_z^{t_{c_0}} G(x)\,dx}.
\end{equation*}
Thus we can estimate
\begin{align}
J_1(t) &\leq k_3 e^{-k_4 c_0(t-t_{c_0})} \int_0^{t_{c_0}}e^{-k_4\int_z^{t_{c_0}} G(x)\,dx}\,dz\nonumber\\
&=\left(  k_3 e^{k_4 c_0 t_{c_0}} \int_0^{t_{c_0}}e^{-k_4\int_z^{t_{c_0}} G(x)\,dx}\,dz \right)
e^{-k_4c_0 t}\nonumber\\
&=C_{c_0}e^{-k_4 c_0 t}.
\end{align}
Now we estimate $J_2(t)$. If $x\geq t_{c_0}$ then $G(x)>c_0$. Thus we get 
\begin{equation}
J_2(t)< k_3 \int_{t_{c_0}}^t e^{-k_4c_0(t-z)}\,dz\leq K.
\end{equation}
Lastly, similarly as in \autoref{thm:Bt_est}, we obtain. 
\begin{equation}\label{eq:B2_upper_A0}
    B_2(t)\leq C_2 e^{-k_4 c_0 t}.
\end{equation}
From \eqref{eq:B1B2_A0}-\eqref{eq:B2_upper_A0} the theorem follows.
\end{proof}

\begin{theorem}
There exists constant $K_7>0$ such that 
\begin{equation*}
I(t)\leq K_7.
\end{equation*}
\end{theorem}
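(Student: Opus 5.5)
The plan is to argue exactly as in the aging case (\autoref{thm:I_upper_estim}), now using the uniform bound on $B$ from \autoref{thm:B_upper_est_A0} in place of the exponential decay bound. Everything rests on the variation-of-constants formula for $I$. Equation \eqref{eq:auto_3} coincides with \eqref{eq:non_auto_3}, so the solution is again given by \eqref{eq:I gen exp sol}:
\begin{equation*}
I(t)=\left(\int_0^t k_6\frac{B(z)}{B_0}\frac{(G(z))^{\gamma}}{c_1+(G(z))^{\gamma}}e^{k_5 z}\,dz+I_0\right)e^{-k_5 t}.
\end{equation*}
Here $G(z)>0$ by \autoref{lem:G pos}, so no absolute values are needed.

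\textbf{Step 1: bound the integrand uniformly in $z$.} The map $x\mapsto x^\gamma/(c_1+x^\gamma)$ is increasing on $[0,\infty)$. Hence, by \autoref{thm:G_upper_est} with $G_{\max}=\max\left(\frac{k_1}{s}+G_u,G_0\right)$, the Hill factor is at most $G_{\max}^\gamma/(c_1+G_{\max}^\gamma)<1$. For $B$, \autoref{thm:B_upper_est_A0} gives $B(z)\le K_6$, but only for $z\ge t_{c_0}$, since the bound is derived for large $t$. On the compact interval $[0,t_{c_0}]$ the function $B$ is continuous, hence bounded. Enlarging $K_6$ if necessary, $B(z)\le K_6$ holds for all $z\ge 0$. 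This bookkeeping is the only point needing care, and it is also the only place where the hypothesis of Open Problem~\ref{open:G bounded below} enters.

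\textbf{Step 2: integrate.} With $M:=\frac{k_6K_6}{B_0}\frac{G_{\max}^\gamma}{c_1+G_{\max}^\gamma}$ we get
\begin{equation*}
I(t)\le M e^{-k_5 t}\int_0^t e^{k_5 z}\,dz+I_0e^{-k_5 t}=\frac{M}{k_5}\left(1-e^{-k_5t}\right)+I_0e^{-k_5t}\le \frac{M}{k_5}+I_0 .
\end{equation*}
We then set $K_7:=M/k_5+I_0$.

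As a cross-check, the same bound also follows from \autoref{cor:diffineq_cor} applied to the inequality $I'\le -k_5 I+M$, which gives $I(t)\le\max(M/k_5,I_0)$. Neither step poses a real obstacle; the only subtlety is extending the bound on $B$ from large $t$ to all $t\ge0$ by continuity.
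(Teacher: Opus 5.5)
Your proposal is correct and follows the paper's own route. The paper also starts from the variation-of-constants formula \eqref{eq:I gen exp sol} and simply invokes \autoref{thm:G_upper_est} and \autoref{thm:B_upper_est_A0}; your Steps 1--2 carry out that estimate explicitly, and your extension of the bound on $B$ from $t\ge t_{c_0}$ to $[0,t_{c_0}]$ by continuity correctly supplies a detail the paper leaves implicit.
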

\begin{proof}
We know (see \eqref{eq:I gen exp sol})
\begin{equation}
I(t)=\left(
\int_0^t k_6\frac{B(z)}{B_0}
\frac{(G(z))^{\gamma}}{c_1+(G(z))^{\gamma}}e^{k_5z}\,dz+
I_0 \right)e^{-k_5t}
\end{equation}
Using \autoref{thm:G_upper_est} and \autoref{thm:B_upper_est_A0} the theorem follows.
\end{proof}

\subsubsection{Local stability of the system}

In \cite{degaetano2024} Appendix B the local stability for 
fixed age ($A(t)=A_0$) was investigated with the assumptions $F(t)=F_0$, $Y(t)=Y_0$. 
Since we use $u(G)$ instead of $R(G)$ we have to modify this investigation, although we use the same 
standard method as in the Appendix B. First, we rewrite the system with a simplified notation as 
\begin{align}
G' &=m_1 e^{-\alpha_0 LIG}-m_2 SIG-u(G), \label{eq:loc_stab_Gprime}\\
B' &=m_3 -m_4 GB, \label{eq:loc_stab_Bprime}\\
I' &=-m_5 I +m_6 B\cdot\frac{G^{\gamma}}{c_1+G^{\gamma}}, \label{eq:loc_stab_Iprime}\\
S' &=m_7-m_8 S \label{eq:loc_stab_Sprime}\\
L' &=m_9-m_{10}L \label{eq:loc_stab_Lprime},
\end{align}
where $m_1=k_1$, $m_2=k_2$, $m_3=k_3$, $m_4=k_4$, $m_5=k_5$, $m_6=k_6/B_0$, $m_7=k_7+k_9 Y_0$, 
$m_8=k_8+k_{10}F_0$, $m_9=k_{12}+k_{14}Y_0$, $m_{10}=k_{13}+k_{15}F_0$. 

\begin{theorem}
System \eqref{eq:loc_stab_Gprime}-\eqref{eq:loc_stab_Lprime} has a unique positive equilibrium point. 
\end{theorem}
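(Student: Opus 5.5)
Setting all right-hand sides of \eqref{eq:loc_stab_Gprime}--\eqref{eq:loc_stab_Lprime} to zero, I would reduce the equilibrium problem to a single scalar equation in $G$ and prove it has exactly one positive root.

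First, from \eqref{eq:loc_stab_Sprime} and \eqref{eq:loc_stab_Lprime} the equilibrium values are forced: $S_*=m_7/m_8>0$ and $L_*=m_9/m_{10}>0$. For $G>0$, \eqref{eq:loc_stab_Bprime} gives $B=m_3/(m_4G)$. Substituting into \eqref{eq:loc_stab_Iprime} then gives
\begin{equation*}
I=\varphi(G):=\frac{m_3m_6}{m_4m_5}\cdot\frac{G^{\gamma-1}}{c_1+G^{\gamma}}>0 .
\end{equation*}
Any equilibrium with positive components must therefore have $G_*>0$ and satisfy $\Phi(G_*)=0$, where
\begin{equation*}
\Phi(G):=m_1e^{-\alpha_0L_*\psi(G)}-m_2S_*\psi(G)-u(G),\qquad \psi(G):=G\varphi(G)=\frac{m_3m_6}{m_4m_5}\cdot\frac{G^{\gamma}}{c_1+G^{\gamma}}.
\end{equation*}
Conversely, every positive root $G_*$ of $\Phi$ determines $B_*,I_*,S_*,L_*>0$ uniquely through the formulas above. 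So the theorem is equivalent to $\Phi$ having exactly one zero on $(0,\infty)$.

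\emph{Existence.} $\Phi$ is continuous on $[0,\infty)$, and since $\psi(0)=0$ and $u(0)=0$ we get $\Phi(0^+)=m_1>0$. Because $\psi$ is bounded, the first two terms of $\Phi$ are bounded, while $u(G)=s(G-G_u)\to\infty$. Hence $\Phi(G)\to-\infty$ as $G\to\infty$, and the intermediate value theorem gives a root $G_*>0$.

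\emph{Uniqueness.} I would show that $\Phi$ is strictly decreasing on $(0,\infty)$. For $\gamma\ge1$ and $G>0$, $\psi$ is strictly increasing, because
\begin{equation*}
\frac{d}{dG}\,\frac{G^\gamma}{c_1+G^\gamma}=\frac{c_1\gamma G^{\gamma-1}}{(c_1+G^\gamma)^2}>0 .
\end{equation*}
Consequently $e^{-\alpha_0L_*\psi}$ is nonincreasing (it is constant when $\alpha_0=0$), $-m_2S_*\psi$ is strictly decreasing, and $-u$ is nonincreasing. The sum is strictly decreasing, so the root is unique. The nonsmoothness of $u$ at $G_u$ causes no difficulty here, because the argument uses only monotonicity and continuity, not differentiability.

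The only delicate point is the bookkeeping that the reduction is an exact equivalence. In particular, $G=0$ must be excluded: \eqref{eq:loc_stab_Bprime} would then force $m_3=0$, which is false. It also has to be checked that the substitution leaves no free parameter. I do not expect any real obstacle beyond this, since the monotonicity of $\psi$ settles uniqueness directly.
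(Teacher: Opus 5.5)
Your proposal is correct and follows essentially the same route as the paper: solve the $B,I,S,L$ equilibrium equations in terms of $G$, reduce to one scalar equation (the paper's $h$ is exactly your $-\Phi$), and conclude from the sign change between $G=0$ and $G\to\infty$ together with strict monotonicity. The only difference is that you get strict monotonicity by composing monotone pieces, whereas the paper differentiates $h$ separately on $(0,G_u)$ and $(G_u,\infty)$; your version handles the kink of $u$ at $G_u$ and the case $\alpha_0=0$ slightly more transparently.
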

\begin{proof}
Solving the equilibrium conditions $B'=0$, $I'=0$, $S'=0$, $L'=0$ we obtain that 
\begin{equation}\label{eq:equi_value}
B_{*}=\frac{m_3}{m_4 G},\quad I_{*}=\frac{m_3 m_6}{m_4 m_5}\frac{G^{\gamma -1}}{c_1+G^{\gamma}},
\quad S_{*}=\frac{m_7}{m_8}, \quad L_{*}=\frac{m_9}{m_{10}}.
\end{equation}
Here the equilibrium value $B_{*}$ and $I_{*}$ depends uniquely on $G$, while 
$S_{*}$ and $L_{*}$ are positive 
constants. So we have to show that the equilibrium value of $G$ is unique. Substituting these values 
into the condition $G'=0$ in \eqref{eq:loc_stab_Gprime} we obtain that 
\begin{equation}
m_1 \exp\left(-\alpha_0\frac{m_3 m_6 m_9}{m_4 m_5 m_{10}}\frac{G^{\gamma}}{c_1+G^{\gamma}} \right) 
-\frac{m_2 m_3 m_6 m_7}{m_4 m_5 m_8}\frac{G^{\gamma}}{c_1+G^{\gamma}}-u(G)=0.
\end{equation}
Denote
\begin{equation}
\alpha:=\alpha_0\frac{m_3 m_6 m_9}{m_4 m_5 m_{10}}>0,\quad \beta:=\frac{m_2 m_3 m_6 m_7}{m_4 m_5 m_8}>0.
\end{equation}
Let us introduce the continuous function
\begin{equation}\label{eq:h_function}
h(G):=\beta\frac{G^{\gamma}}{c_1+G^{\gamma}}+u(G)-m_1\exp\left(-\alpha \frac{G^{\gamma}}{c_1+G^{\gamma}}  \right)
\end{equation}
defined for $G\geq 0$. Then $h$ has the following properties:

(i) $h(0)=-m_1<0$.

(ii) $h(G)\to\infty$ as $G\to\infty$.

We show that the function $h$ increases strictly monotonically.
We have to distinguish two cases. 
(a) $0< G< G_u$ and (b) $G> G_u$.

(a) In this case 
\begin{equation}
h(G)=\beta\frac{G^{\gamma}}{c_1+G^{\gamma}}-m_1\exp\left(-\alpha \frac{G^{\gamma}}{c_1+G^{\gamma}}  \right)
\end{equation}
and
\begin{equation}
h'(G) = \frac{\gamma c_1 G^{\gamma-1}}{(c_1 + G^{\gamma})^2} \left[ \beta + m_1 \alpha \exp\left(-\alpha \frac{G^{\gamma}}{c_1 + G^{\gamma}}\right) \right]>0.
\end{equation}

(b) In this case 
\begin{equation}
h(G)=\beta\frac{G^{\gamma}}{c_1+G^{\gamma}}+s(G-G_u) 
-m_1\exp\left(-\alpha \frac{G^{\gamma}}{c_1+G^{\gamma}}  \right)
\end{equation}
and
\begin{equation}
h'(G) = \frac{\gamma c_1 G^{\gamma-1}}{(c_1 + G^{\gamma})^2} \left[ \beta + m_1 \alpha \exp\left(-\alpha \frac{G^{\gamma}}{c_1 + G^{\gamma}}\right) \right]+s>0.
\end{equation}

Therefore, $h(G)$ has a unique positive equilibrium point.
\end{proof}

\begin{theorem}
The equilibrium point is locally asymptotically stable for all positive values of the parameters. 
\end{theorem}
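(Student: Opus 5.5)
The plan is to linearize system \eqref{eq:loc_stab_Gprime}--\eqref{eq:loc_stab_Lprime} at the unique positive equilibrium $(G_*,B_*,I_*,S_*,L_*)$ and show that every eigenvalue of the Jacobian has negative real part.

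The first task is to justify linearization despite the kink of $u$ at $G_u$. For $\gamma\geq 1$ and $G_*>0$, the function $G^\gamma/(c_1+G^\gamma)$ is smooth near $G_*$. So the only issue is whether $G_*=G_u$. I will treat the generic cases $G_*\neq G_u$, where $u$ is smooth near $G_*$ and $u'(G_*)=\sigma\in\{0,s\}$. For the degenerate case $G_*=G_u$, I would note that both one-sided linearizations are of the same form with $\sigma\in\{0,s\}$, and argue via a common quadratic Lyapunov function or by a comparison argument. I expect this to be the step needing the most care.

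Next comes the structure of the Jacobian. The $S$ and $L$ equations are decoupled: $S'=m_7-m_8S$ and $L'=m_9-m_{10}L$. This gives the eigenvalues $-m_8<0$ and $-m_{10}<0$, and the remaining $3\times3$ block in $(G,B,I)$ determines stability. Write $E=e^{-\alpha_0L_*I_*G_*}$ and $\phi(G)=G^\gamma/(c_1+G^\gamma)$. The block is
\begin{equation*}
J=\begin{pmatrix} -a_{11} & 0 & -a_{13}\\ -m_4B_* & -m_4G_* & 0\\ m_6B_*\phi'(G_*) & m_6\phi(G_*) & -m_5\end{pmatrix},
\end{equation*}
where $a_{11}=m_1\alpha_0L_*I_*E+m_2S_*I_*+\sigma>0$ and $a_{13}=m_1\alpha_0L_*G_*E+m_2S_*G_*>0$. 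All the constants $a_{11},a_{13},m_4B_*,m_4G_*,m_6B_*\phi'(G_*),m_6\phi(G_*),m_5$ are positive, using $I_*,B_*,G_*>0$ and $\phi'>0$ on $(0,\infty)$.

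Then I compute the characteristic polynomial $\lambda^3+c_2\lambda^2+c_1'\lambda+c_0$ and check the Routh--Hurwitz conditions $c_2>0$, $c_0>0$, $c_2c_1'>c_0$. Set $p=a_{11}$, $q=m_4G_*$, $r=m_5$, $x=m_4B_*$, $y=m_6\phi(G_*)$, $z=m_6B_*\phi'(G_*)$. A cofactor expansion gives
\begin{equation*}
c_2=p+q+r,\quad c_1'=pq+pr+qr+a_{13}z,\quad c_0=pqr+a_{13}(qz-xy).
\end{equation*}
The sign of $qz-xy$ needs checking. Using $B_*=m_3/(m_4G_*)$ we get $qz-xy=m_3m_6(\phi'(G_*)-\phi(G_*)/G_*)$. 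This has no fixed sign in general, since $G\phi'(G)=\gamma\phi(G)(1-\phi(G))$.

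This is the main obstacle. To handle it I will use the equilibrium relations instead. Along the equilibrium curve $I_*(G)=\frac{m_3m_6}{m_4m_5}\phi(G)/G$, and $c_0$ equals $qr$ times the derivative of the function $h$ from \eqref{eq:h_function}, up to the positive factor relating $d/dG$ of the reduced equation. Concretely, $\det(-J)$ equals $m_4m_5G_*\cdot h'(G_*)$ divided by an appropriate positive factor, and $h'>0$ was shown in the previous theorem. So $c_0>0$ follows from the monotonicity of $h$, and I would verify this identity directly.

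For $c_2c_1'>c_0$, the term $c_2(pq+pr+qr)-pqr$ is already $\geq$ a positive sum of quadratic-type terms. Also $c_2a_{13}z\geq ra_{13}z$ and $qa_{13}z$, and $-a_{13}xy$ must be absorbed. I will try $a_{13}xy\leq$ something like $(q+r)\cdot(\cdot)$ using $xy=m_4m_6B_*\phi(G_*)=m_4m_5I_*$, which gives $a_{13}xy=m_5m_4I_*a_{13}$. Since $a_{11}\geq m_2S_*I_*$ and $a_{13}=G_*(m_1\alpha_0L_*E+m_2S_*)$, we have $a_{13}I_*\leq G_*a_{11}\cdot\max(\dots)$; precisely $I_*a_{13}=G_*(a_{11}-\sigma)$. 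Hence $a_{13}xy=m_5m_4G_*(p-\sigma)\leq pqr$, which is dominated by the $c_2(pq+pr+qr)$ terms. This gives $c_2c_1'-c_0>0$ and completes Routh--Hurwitz.
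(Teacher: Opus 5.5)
For $G_*\neq G_u$ your argument is sound and is essentially the paper's: split off $-m_8,-m_{10}$ and apply Routh--Hurwitz to the $(G,B,I)$ block. I have two remarks on this part.

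First, your identity $c_0=qr\,h'(G_*)=m_4m_5G_*\,h'(G_*)$ is exact; it is the Schur-complement form of differentiating the reduced equilibrium equation, so no extra ``positive factor'' appears. It explains conceptually what the paper does by direct computation. The paper writes $p=a_{13}I_*/G_*+\sigma$ and uses $m_5I_*=m_6B_*\phi(G_*)$ to get $a_{13}xy=qr(p-\sigma)$, hence $c_0=\sigma qr+a_{13}qz>0$ immediately.

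Second, you derive exactly this identity in your last paragraph but spend it on the Hurwitz determinant, where it is not needed. Because $c_0$ contains $-a_{13}xy$,
\[
c_2c_1'-c_0=\bigl[(p+q+r)(pq+pr+qr)-pqr\bigr]+(p+r)a_{13}z+a_{13}xy>0,
\]
so the $xy$ term has a favourable sign. Your sign slip is harmless, since you end up proving a stronger inequality, but it should be corrected.

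The genuine gap is the case $G_*=G_u$, which the statement covers: it occurs exactly when $h(G_u)=0$, a condition that does not involve $s$. There the right-hand side is only Lipschitz. Knowing that both one-sided Jacobians $J_0$ and $J_s=J_0-s\,e_1e_1^{T}$ are Hurwitz (which your computation does give) is not enough, because continuous piecewise-linear systems in $\mathbb{R}^3$ glued from two Hurwitz matrices can be unstable. Saying ``a common quadratic Lyapunov function or a comparison argument'' names a strategy but omits its only nontrivial step.

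The Lyapunov route does work here. Take $\sigma=0$, so $p=a_{13}I_*/G_*$. Put $\chi_0(\lambda)=\lambda^3+(p+q+r)\lambda^2+(pq+pr+qr+a_{13}z)\lambda+a_{13}qz$ and $H(\lambda)=e_1^{T}(\lambda I-J_0)^{-1}e_1=(\lambda+q)(\lambda+r)/\chi_0(\lambda)$. A direct computation gives
\[
\mathrm{Re}\bigl[(i\omega+q)(i\omega+r)\overline{\chi_0(i\omega)}\bigr]=a_{13}q^2rz+\bigl(p(q^2+qr+r^2)+a_{13}rz\bigr)\omega^2+p\,\omega^4>0,
\]
so $1+s\,\mathrm{Re}\,H(i\omega)>0$ for all real $\omega$ and every $s>0$. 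By the circle criterion (the strict KYP lemma, which needs no minimality) there is $P\succ0$ such that $\xi^{T}P\xi$ strictly decreases along $\xi'=J_0\xi-e_1\psi(\xi_1)$ for every $\psi$ in the sector $[0,s]$. In particular this holds for $\psi(\xi_1)=s\max(\xi_1,0)$.

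Now use deviation coordinates $\xi=(G-G_*,B-B_*,I-I_*)$ and $\eta=(S-S_*,L-L_*)$. For a constant matrix $K$ the system reads $\xi'=J_0\xi-s\max(\xi_1,0)e_1+K\eta+o(|\xi|+|\eta|)$ and $\eta'=-\mathrm{diag}(m_8,m_{10})\eta$. Hence $V=\xi^{T}P\xi+\mu|\eta|^2$ with $\mu$ large is a strict local Lyapunov function, and local asymptotic stability follows.

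Some argument of this kind has to be supplied. You cannot borrow the paper's own Case 3 to close this step. It is a sign analysis of $G'$ with $B,I,S,L$ frozen at their equilibrium values, and that does not control the coupled $(G,B,I)$ dynamics.
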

\begin{proof}
Let $G_{*}, B_{*}, I_{*}, S_{*}, L_{*}$ denote the coordinates of the unique positive equilibrium point. 
We have to distinguish three cases. 

Case 1. $0<G_{*}<G_u$. 

Since $h$ increases strictly monotonically, the necessary and sufficient assumption is $h(G_u)>0$, that is,
\begin{equation}
\beta\frac{G_u^{\gamma}}{c_1+G_u^{\gamma}}>m_1\exp\left(-\alpha \frac{G_u^{\gamma}}{c_1+G_u^{\gamma}}  \right).
\end{equation}
The Jacobian of the system is 
\begin{equation}
J(G_{*}, B_{*}, I_{*}, S_{*}, L_{*}) = 
\begin{bmatrix} 
-b_1 & 0 & -b_2 & -b_3 & -b_4 \\
-b_5 & -b_6 & 0 & 0 & 0 \\
d_1 & d_2 & -m_5 & 0 & 0 \\
0 & 0 & 0 & -m_8 & 0 \\
0 & 0 & 0 & 0 & -m_{10} 
\end{bmatrix}
\end{equation}
where 
\begin{align}
b_1 &=\alpha_0 L_{*} I_{*} m_1 e^{-\alpha_0 L_{*} I_{*} G_{*}} + m_2 S_{*} I_{*} \label{eq:J_entries_b1} \\
b_2 &=\alpha_0 L_{*} G_{*} m_1 e^{-\alpha_0 L_{*} I_{*} G_{*}} + m_2 S_{*} G_{*} \\
b_3 &=m_2 I_{*} G_{*} \\
b_4 &= \alpha_0 I_{*} G_{*} m_1 e^{-\alpha_0 L_{*} I_{*} G_{*}} \\
b_5 &= m_4 B_{*} \\
b_6 &= m_4 G_{*} \\
d_1 &= \frac{m_6 \gamma c_1 G_{*}^{\gamma-1} B_{*}}{(c_1 + G_{*}^{\gamma})^2} \\
d_2 &= \frac{m_6 G_{*}^{\gamma}}{c_1 + G_{*}^{\gamma}} \label{eq:J_entries_d2}
\end{align}
The characteristic polynomial of $J$ is 
\begin{equation}
p(\lambda)=(\lambda +m_8)(\lambda +m_{10})(\lambda^3+a_2\lambda^2+a_1\lambda+a_0),
\end{equation}
where
\begin{align*}
a_2 &=m_5+b_6+b_1 \\
a_1 &=(b_1+m_5)b_6+m_5b_1+d_1b_2 \\
a_0 &=(m_5b_1+d_1b_2)b_6-d_2b_5b_2.
\end{align*}
The equilibrium point of the system is locally stable if all the roots of $p(\lambda)$ have negative real parts. 
Since $p(\lambda)$ has two negative roots, $-m_8$ and $-m_{10}$, we have to investigate the roots of 
$\lambda^3+a_2\lambda^2+a_1\lambda+a_0$. By the Routh-Hurwitz stability criterion this third-degree polynomial has all 
roots with negative real parts if and only if 
\begin{equation}
a_2, a_1, a_0>0,\, a_1a_2-a_0>0. 
\end{equation}
Obviously $a_2,a_1>0$. Furthermore
\begin{equation}
a_1a_2-a_0=(m_5+b_6)b_1^2+(d_1b_2+(b_6+m_5)^2)b_1+b_6m_5^2+(d_1b_2+b_6^2)m_5+d_2b_5b_2>0.
\end{equation}
Now we calculate the value of $a_0$. Denote 
\begin{equation}
E_{*}:=\exp\left( -\alpha_0 L_{*} I_{*} G_{*} \right).
\end{equation}
First we calculate $b_1b_6m_5-b_2b_5d_2$. Obviously
\begin{equation}
b_1=\frac{b_2}{G_{*}}I_{*}.
\end{equation}
Using $b_5=m_4 B_{*}$ and $b_6=m_4 G_{*}$ it follows 
\begin{align}
b_1b_6m_5-b_2b_5d_2 &=\left(\frac{b_2}{G_{*}}I_{*} \right)(m_4 G_{*})m_5-b_2(m_4B_{*})d_2 \nonumber \\
&=I_{*}b_2m_4m_5-b_2m_4B_{*}d_2 \nonumber\\
&=b_2m_4(I_{*}m_5-B_{*}d_2) \nonumber\\
&=b_2m_4\left[\left( \frac{m_3m_6}{m_4m_5}\frac{G_{*}^{\gamma-1}}{c_1+G_{*}^{\gamma}} \right)m_5 -
\left( \frac{m_3}{m_4G_{*}}\right) \left( m_6\frac{G_{*}^{\gamma}}{c_1+G_{*}^{\gamma}}  \right)   \right] \nonumber\\
&=0. \label{eq:a0_zero}
\end{align}
Thus we get
\begin{equation}
a_0=\frac{\gamma c_1 m_3 m_6 G_{*}^{\gamma}}{m_{10}m_8 (c_1+G_{*}^{\gamma})^2}(\alpha_0 m_1m_8m_9E_{*}+m_2m_7m_{10})>0.
\end{equation}
Thus we have proved that the equilibrium point of the system is locally stable.

Case 2. $G_{*}>G_u$. 

Since $h$ increases strictly monotonically, the necessary and sufficient assumption is $h(G_u)<0$, that is,
\begin{equation}
\beta\frac{G_u^{\gamma}}{c_1+G_u^{\gamma}}<m_1\exp\left(-\alpha \frac{G_u^{\gamma}}{c_1+G_u^{\gamma}}  \right).
\end{equation}
We follow the calculation of Case 1. In this case 
\begin{equation*}
b_1 =\alpha_0 L_{*} I_{*} m_1 e^{-\alpha_0 L_{*} I_{*} G_{*}} + m_2 S_{*} I_{*}+s
\end{equation*}
and 
\begin{equation*}
b_1b_6m_5-b_2b_5d_2=sm_4m_5G_{*}.
\end{equation*}
Finally,
\begin{equation}
a_0=\frac{\gamma c_1 m_3 m_6 G_{*}^{\gamma}}{m_{10}m_8 (c_1+G_{*}^{\gamma})^2}(\alpha_0 m_1m_8m_9E_{*}+m_2m_7m_{10})+sm_4m_5G_{*}>0.
\end{equation}
So in this case the equilibrium point of the system is locally stable.

Case 3. $G_{*}=G_u$. 

We will investigate the sign of $G'$ around $G_u$. 
Let us substitute 
\begin{align*}
L_{*} &= \frac{m_9}{m_{10}}, \\
I_{*} &= \frac{m_3m_6}{m_4m_5}\frac{G_u^{\gamma -1}}{c_1+G_u^{\gamma}}, \\
S_{*} &= \frac{m_7}{m_8}
\end{align*}
into the right-hand-side of \eqref{eq:loc_stab_Gprime}. 
Then we obtain
\begin{align*}
G' &=m_1 e^{-\alpha_0 L_{*}I_{*}G} -m_2 S_{*}I_{*}G-u(G) \\
&=:f(G).
\end{align*}
Then $f(0)=m_1>0$, $f(G_u)=0$, and $f(G)\to -\infty$ as $G\to\infty$. When $0<G<G_u$ 
then $f(G)=m_1 \exp\left(-\alpha_0 L_{*}I_{*}G\right) -m_2 S_{*}I_{*}G$, 
and $f'(G)=-m_1 \alpha_0 L_{*}I_{*} \exp\left(-\alpha_0 L_{*}I_{*}G\right) -m_2 S_{*}I_{*}<0$. 
Thus $f$ is strictly monotonically decreasing, that implies $f(G)>0$ on $[0,G_u)$. 
Hence $G'>0$, the $G$ coordinate of the 
trajectory approaches the boundary surface ($G=G_u$) from below. Similarly, 
when $G>G_u$ then $f(G)=m_1 \exp\left(-\alpha_0 L_{*}I_{*}G\right) -m_2 S_{*}I_{*}G-s(G-G_u)$, and 
$f'(G)=-m_1 \alpha_0 L_{*}I_{*} \exp\left(-\alpha_0 L_{*}I_{*}G\right) -m_2 S_{*}I_{*}-s<0$.
Thus $f$ is strictly monotonically decreasing, that implies $f(G)<0$ on $(G_u,\infty)$. 
Hence $G'<0$, the $G$ coordinate of the trajectory approaches the boundary surface ($G=G_u$) from above.

Similarly, if $S<S_{*}$ then $S'>0$, the $S$ trajectory approaches $S_{*}$ from left, 
and if $S>S_{*}$ then $S'>0$, the $S$ coordinate of the trajectory approaches $S_{*}$ from right. 
In the same way we obtain that the $L$, $I$, $B$ coordinates of the trajectory approach 
$L_{*}$, $T_{*}$, $B_{*}$ respectively.   It follows that the equilibrium point of the system is locally stable. 
\end{proof}

\section*{Acknowledgements}

The author of this paper would like to thank I. Nagy (one of the authors of \cite{degaetano2024}) 
for the detailed discussion regarding the interpretation of the model.

\section*{Declarations}

\textbf{Conflict of interest}  The author declares that he has no conflict of interest.
%%%%%%%%%%%%%%%%%%%%%%%%%%%%%%%%%%%%%%%%%%%%%%%%%%%%%%%%%%%%%%%%%%%%%%%%%%%%%%%
%\bibliography{sn-bibliography}

\end{document}